\documentclass[a4paper,12pt]{article}
\setlength{\textwidth}{16.8cm}
\setlength{\textheight}{23cm}
\setlength{\oddsidemargin}{-4mm}
\setlength{\topmargin}{-1cm}

\usepackage{latexsym}
\usepackage{amsmath}
\usepackage{amssymb}
\usepackage{enumerate}
\usepackage{epic,eepic}
\usepackage{color}
\usepackage{bm}
\usepackage{mathrsfs}

\definecolor{refkey}{gray}{0.5}
\definecolor{labelkey}{gray}{0.2}

\usepackage{theorem}
\newtheorem{theorem}{Theorem}[section]
\newtheorem{proposition}[theorem]{Proposition}
\newtheorem{lemma}[theorem]{Lemma}

\theorembodyfont{\rmfamily}
\newtheorem{proof}{\textmd{\textit{Proof.}}}

\newtheorem{remark}[theorem]{Remark}
\newtheorem{example}[theorem]{Example}
\newtheorem{definition}[theorem]{Definition}
\newtheorem{condition}{Condition}

\makeatletter

\@addtoreset{equation}{section}
\makeatother

\newcommand{\qedd}{\hfill \square}
\newcommand{\ve}{\varepsilon}

\newcommand{\lra}{\longrightarrow}
\newcommand{\e}{\mathrm{e}}

\newcommand{\N}{\ensuremath{\mathbb{N}}}

\newcommand{\R}{\ensuremath{\mathbb{R}}}
\newcommand{\Sph}{\ensuremath{\mathbb{S}}}
\newcommand{\bA}{\ensuremath{\mathbf{A}}}
\newcommand{\bV}{\ensuremath{\mathbf{V}}}
\newcommand{\sL}{\ensuremath{\mathsf{L}}}
\newcommand{\sW}{\ensuremath{\mathsf{W}}}
\newcommand{\cJ}{\ensuremath{\mathcal{J}}}
\newcommand{\cH}{\ensuremath{\mathcal{H}}}

\def\diam{\mathop{\mathrm{diam}}\nolimits}

\def\CAT{\mathop{\mathrm{CAT}}\nolimits}

\title{Self-contracted curves in $\CAT(0)$-spaces\\ and their rectifiability}
\author{Shin-ichi OHTA\thanks{Department of Mathematics, Osaka University,
Osaka 560-0043, Japan ({\sf s.ohta@math.sci.osaka-u.ac.jp})}
\thanks{RIKEN Center for Advanced Intelligence Project (AIP),
1-4-1 Nihonbashi, Tokyo 103-0027, Japan}}
\date{\today}
\pagestyle{plain}

\begin{document}

\maketitle

\begin{abstract}
We investigate self-contracted curves,
arising as (discrete or continuous time) gradient curves of quasi-convex functions,
and their rectifiability (finiteness of the lengths) in Euclidean spaces,
Hadamard manifolds and $\CAT(0)$-spaces.
In the Hadamard case,
we give a quantitative refinement of the original proof of the rectifiability
of bounded self-contracted curves (in general Riemannian manifolds) by Daniilidis et al.
Our argument leads us to a generalization to $\CAT(0)$-spaces
satisfying several uniform estimates on their local structures.
Upon these conditions, we show the rectifiability of bounded self-contracted curves 
in trees, books and $\CAT(0)$-simplicial complexes.
\end{abstract}

\tableofcontents

\section{Introduction}\label{sc:intro}

Let $(X,d)$ be a metric space.
A curve $\xi:[0,\ell) \lra X$ is said to be \emph{self-contracted} if,
for each $T \in (0,\ell)$, the distance $d(\xi(t),\xi(T))$
is non-increasing in $t \in [0,T]$, that is to say,
\[ d\big( \xi(t_2),\xi(t_3) \big) \le d\big( \xi(t_1),\xi(t_3) \big)
 \qquad \text{for all}\ 0 \le t_1 \le t_2 \le t_3 <\ell. \]
We remark that $\xi$ is not necessarily continuous,
and $\ell>0$ can be infinite.
This simple and flexible notion turned out quite useful in the study of the \emph{rectifiability} of curves,
namely the finiteness of the length:
\[ \sL(\xi) :=\sup\bigg\{ \sum_{i=1}^k d\big( \xi(t_{i-1}),\xi(t_i) \big) \,\bigg|\,
 0=t_0<t_1< \cdots <t_k<\ell \bigg\}. \]
The self-contractedness was introduced by Daniilidis et al in \cite{DLS}
to study gradient systems of convex or, more generally, quasi-convex functions
(independently from \cite{MPit,MP} dealing with a related class of curves,
called \emph{self-expanding curves} in \cite{D3L}).
See also \cite{GS} for a related work on surfaces of constant curvature.
We refer to \cite[\S 1]{D3L} for more background information and references.

A fundamental and motivating example of a self-contracted curve
is a gradient curve for a quasi-convex function.
More precisely, given a quasi-convex function $f:\R^n \lra \R$,
each discrete-time gradient curve constructed by the proximal method
(and its piecewise affine continuous extension)
is self-contracted (\cite[Proposition~4.16]{D3L}).
Then the continuous-time gradient curve given as the limit of discrete ones
clearly inherits the self-contractedness.
For gradient curves of a convex function,
the self-contractedness is derived also from the evolution variational inequality.
See \S\S\ref{ssc:gf}, \ref{ssc:gf'} for details in the generalized setting of
$\CAT(0)$-spaces (or $\CAT(1)$-spaces with diameter $<\pi/2$).

The rectifiability is a central subject of the study of self-contracted curves.
It provides a theoretical guarantee on the convergence of gradient curves of quasi-convex functions,
thereby especially quantitative estimates are
of fundamental importance from the viewpoint of optimization theory.
To the best of the author's knowledge, the self-contractedness is the most useful tool to show the rectifiability.
The rectifiability of bounded self-contracted curves
has been established in \cite{D3L} for Euclidean spaces
(see also \cite{LMV} for an independent work on continuous curves),
in \cite{D3R} for Riemannian manifolds,
and in \cite{Le,ST} for finite-dimensional normed spaces.
(We remark that the self-contractedness of gradient curves of convex functions
may fail in normed spaces, see \cite{OSnc} for the failure of the contraction property.)
An important feature of these results is that
the rectifiability holds true only in finite-dimensional spaces.
The estimates of the length indeed depend on the dimension of the space,
and it is known that we can easily construct a counter-example in a Hilbert space
(see Example~\ref{ex:infd}).

Although the self-contractedness is written only in terms of the distance function,
all known results were concerned with manifolds or normed spaces.
The main aim of the present article is to study self-contracted curves in a genuinely metric setting
of \emph{$\CAT(0)$-spaces}
(metric spaces of non-positive sectional curvature in the sense of triangle comparison theorem).
For this purpose, we start with a self-contained review of the Euclidean situation,
followed by the case of \emph{Hadamard manifolds}
(complete, simply-connected Riemannian manifolds of non-positive sectional curvature)
where we give a quantitative refinement of the proof of \cite{D3R} based on comparison theorems
(Theorem~\ref{th:Hrect}).
This argument leads us to three conditions on $\CAT(0)$-spaces under them
the rectifiability is established (see \S \ref{ssc:Crect}).
The conditions introduced in \S \ref{ssc:Crect} are uniform estimates
on the local structures and not fulfilled by general $\CAT(0)$-spaces.
We shall consider \emph{trees} of bounded degrees and \emph{books} consisting of finite sheets
as fundamental examples of $\CAT(0)$-spaces,
and see that these conditions are satisfied (\S\S \ref{ssc:tree}, \ref{ssc:book}).
More generally, $\CAT(0)$-simplicial complexes satisfying quite mild hypotheses
satisfy our conditions (Theorem~\ref{th:simcom}).
This in particular shows that
our argument does not prevent spaces with non-uniform dimensions.
In the final section we will discuss several related further problems.
\bigskip

{\it Acknowledgements}.
I would like to thank Mikl\'os P\'alfia for stimulating discussions.
The author was supported in part by
JSPS Grant-in-Aid for Scientific Research (KAKENHI) 15K04844.

\section{Self-contracted curves in Euclidean spaces}\label{sc:Eucl}

In this section, after a brief explanation of some general properties
of self-contracted curves in metric spaces,
we deal with the fundamental Euclidean setting.
We will give a self-contained proof of the rectifiability of bounded self-contracted curves
along (essentially) the lines of \cite{D3L},
for the sake of completeness as well as a transparent description of
how to generalize it to Hadamard manifolds.

We will denote by $B(x,r)$ (resp.\ $\bar{B}(x,r)$)
the open (resp.\ closed) ball of center $x$ and radius $r>0$.

\subsection{Self-contracted curves}\label{ssc:SC}

Recall from the introduction that
a curve $\xi:[0,\ell) \lra X$ in a metric space $(X,d)$ (with $\ell \in (0,\infty]$)
is said to be \emph{self-contracted} if it satisfies
\begin{equation}\label{eq:SC}
d\big( \xi(t_2),\xi(t_3) \big) \le d\big( \xi(t_1),\xi(t_3) \big)
 \qquad \text{for all}\ 0 \le t_1 \le t_2 \le t_3 <\ell.
\end{equation}
The condition \eqref{eq:SC} is flexible about deformations of the parametrization
(as in (i) of the next lemma).
The following is straightforward from the definition.

\begin{lemma}\label{lm:dcon}
Let $\xi:[0,\ell) \lra X$ be a self-contracted curve.
\begin{enumerate}[{\rm (i)}]
\item
Given any non-decreasing function $\phi:[0,\ell') \lra [0,\ell)$,
the curve $\xi \circ \phi$ is self-contracted.

\item
If $\xi(t_1)=\xi(t_3)$ for some $t_1<t_3$,
then we have $\xi(t_2)=\xi(t_1)$ for all $t_2 \in [t_1,t_3]$.
\end{enumerate}
\end{lemma}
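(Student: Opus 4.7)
The plan is to check both assertions by directly unwinding the defining inequality \eqref{eq:SC}. Both are essentially bookkeeping, so I will keep the argument short and focus on identifying the right triples $(t_1,t_2,t_3)$ to feed into the self-contracted property of $\xi$.

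For part (i), I would start by fixing any $0 \le s_1 \le s_2 \le s_3 < \ell'$. Since $\phi$ is non-decreasing, the images $t_i := \phi(s_i)$ satisfy $0 \le t_1 \le t_2 \le t_3 < \ell$, so they form an admissible triple for the self-contractedness of $\xi$. Applying \eqref{eq:SC} to this triple gives
\[ d\bigl( (\xi \circ \phi)(s_2), (\xi \circ \phi)(s_3) \bigr) = d\bigl( \xi(t_2), \xi(t_3) \bigr) \le d\bigl( \xi(t_1), \xi(t_3) \bigr) = d\bigl( (\xi \circ \phi)(s_1), (\xi \circ \phi)(s_3) \bigr), \]
which is exactly what is needed.

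For part (ii), I would simply apply \eqref{eq:SC} to the triple $t_1 \le t_2 \le t_3$ (for any $t_2$ in the given interval) to obtain
\[ d\bigl( \xi(t_2), \xi(t_3) \bigr) \le d\bigl( \xi(t_1), \xi(t_3) \bigr) = 0, \]
so $\xi(t_2) = \xi(t_3) = \xi(t_1)$. There is no real obstacle here; the only point worth mentioning is that part (ii) expresses the intuitive fact that a self-contracted curve cannot leave and return to the same point, which will be used later to reparametrize without loss of generality so that the curve is ``genuinely moving.''
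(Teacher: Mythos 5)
Your proof is correct and is exactly the intended argument: the paper omits the proof entirely, stating only that the lemma is ``straightforward from the definition,'' and your direct unwinding of \eqref{eq:SC} with the triples $(\phi(s_1),\phi(s_2),\phi(s_3))$ for (i) and $(t_1,t_2,t_3)$ for (ii) is the standard verification being alluded to.
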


The function $\phi$ in (i) above is not necessarily continuous nor injective.
One can also consider a self-contracted curve defined on a disconnected set,
for example, $\xi:[0,\ell] \cup [\ell',\ell'') \lra X$.
In this case, however, the extension $\hat{\xi}:[0,\ell'') \lra X$
defined by $\hat{\xi}(t):=\xi(\ell)$ (or $\xi(\ell')$) for $t \in (\ell,\ell')$ is self-contracted.
Therefore considering only intervals does not lose any generality.

\subsection{Quasi-convex functions}\label{ssc:qc}

The remainder of the section is devoted to the Euclidean setting.
We will denote by $\|\cdot\|$ and $\langle \cdot,\cdot \rangle$
the Euclidean norm and inner product, respectively.
A function $f:\R^n \lra \R$ is said to be \emph{quasi-convex} if
\begin{equation}\label{eq:qc}
f\big( (1-s)x+sy \big) \le \max\{f(x),f(y)\} \qquad
 \text{for all}\ x,y \in \R^n,\, s \in (0,1).
\end{equation}
This is equivalent to the property that the sub-level set $\{x \in \R^n \,|\, f(x) \le a\}$
is convex for every $a \in \R$.
We observe from the following examples that
quasi-convexity is a weaker and much more flexible condition than convexity.

\begin{example}[Quasi-convex functions]\label{ex:ESC}
\begin{enumerate}[(a)]
\item
Any convex function $f:\R^n \lra \R$ is clearly quasi-convex.

\item
When $n=1$, then any monotone non-decreasing (or non-increasing) functions
are quasi-convex.

\item
Again in the case of $n=1$,
a function $f:\R \lra \R$ such that $f|_{(-\infty,0]}$ is non-increasing
and that $f|_{[0,\infty)}$ is non-decreasing is quasi-convex.
\end{enumerate}
\end{example}

When $f:\R^n \lra \R$ is $C^1$ and quasi-convex,
then any \emph{gradient curve} $\xi:[0,\ell) \lra \R^n$ of $f$
(that is to say, a solution to $\dot{\xi}(t)=-\nabla f(\xi(t))$) is self-contracted.
Indeed, given $T \in (0,\ell)$ and any $t \in [0,T)$, we have
\begin{align*}
\frac{d}{dt}\Big[ \|\xi(T)-\xi(t)\|^2 \Big]
&= -2\langle \dot{\xi}(t),\xi(T)-\xi(t) \rangle
 = 2\big\langle \nabla f\big( \xi(t) \big),\xi(T)-\xi(t) \big\rangle \\
&= 2\lim_{s \downarrow 0}\frac{f((1-s)\xi(t)+s\xi(T)) -f(\xi(t))}{s}
 \le 0
\end{align*}
since $f(\xi(T)) \le f(\xi(t))$.
We will discuss a more general situation
(lower semi-continuous quasi-convex functions on $\CAT(0)$-spaces)
in Proposition~\ref{pr:CATsc},
see also \S \ref{ssc:gf'} for a relation with the evolution variational inequality.

\begin{remark}\label{rm:QC}
We remark that the self-contractedness of gradient curves does not imply quasi-convexity.
In fact, any $C^1$-function on $\R$ satisfies this property.
In order to characterize convexity, we need a condition,
not only on the behavior of a single curve,
but on a pair of curves (like the contraction property \eqref{eq:cont})
or on a pair of a point and a curve (like the evolution variational inequality \eqref{eq:EVI}).
\end{remark}

\subsection{Angle estimates}\label{ssc:key}

The following simple inequality, going back to \cite[(1) in \S 2]{MP},
plays a fundamental role in the study of self-contracted curves.
See also \cite[Lemma~2.7]{D3L}, \cite[\S 3.3]{D3R} and \cite[\S 3]{Le}.

\begin{lemma}[Angle estimate]\label{lm:key}
Let $\xi:[0,\ell) \lra \R^n$ be a self-contracted curve.
Then, for each $\tau \in [0,\ell)$ and all $t_1,t_2 \in (\tau,\ell)$ with $\xi(t_1),\xi(t_2) \neq \xi(\tau)$,
we have
\begin{equation}\label{eq:pi/2}
\angle \big( \xi(t_1)-\xi(\tau),\xi(t_2)-\xi(\tau) \big) <\frac{\pi}{2},
\end{equation}
where $\angle(v,w)$ for $v,w \in \R^n \setminus \{0\}$ denotes the Euclidean angle.
\end{lemma}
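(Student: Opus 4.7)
The plan is to fix the ordering and unfold the angle inequality via the polarization identity. Assume without loss of generality that $t_1\le t_2$; by hypothesis both $\xi(t_1)$ and $\xi(t_2)$ differ from $\xi(\tau)$. Set, for brevity, $p:=\xi(\tau)$, $q:=\xi(t_1)$ and $r:=\xi(t_2)$. Since $\angle(v,w)<\pi/2$ is equivalent to $\langle v,w\rangle>0$, what needs to be shown is
\[
\langle q-p,\,r-p\rangle >0.
\]

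First I would invoke the self-contractedness \eqref{eq:SC} along the ordered triple $\tau\le t_1\le t_2$ to obtain
\[
\|q-r\|=d\bigl(\xi(t_1),\xi(t_2)\bigr)\le d\bigl(\xi(\tau),\xi(t_2)\bigr)=\|p-r\|.
\]
Next I would apply the polarization identity
\[
2\langle q-p,\,r-p\rangle =\|q-p\|^2+\|r-p\|^2-\|q-r\|^2.
\]
Combining the previous inequality $\|r-p\|^2-\|q-r\|^2\ge 0$ with the hypothesis $q\neq p$ (so $\|q-p\|^2>0$) yields $2\langle q-p,r-p\rangle\ge \|q-p\|^2>0$, which gives the strict angle bound \eqref{eq:pi/2}.

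Since the argument reduces to one application of self-contractedness together with the cosine rule, I do not expect any real obstacle; the only mild point is the bookkeeping when $t_2<t_1$, which is handled by symmetry in the two indices. It is worth noting that the same argument works more generally in any geodesic space where the cosine rule is an inequality of the correct sign, which is exactly the mechanism that will let us replace $\R^n$ by a Hadamard manifold or a $\CAT(0)$-space in later sections.
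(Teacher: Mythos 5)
Your proof is correct and is essentially the paper's argument: both rest on the single inequality $\|\xi(t_1)-\xi(t_2)\|\le\|\xi(\tau)-\xi(t_2)\|$ from self-contractedness, the paper then reading off the angle bound from the picture of $\xi(t_1)$ lying in the ball $\bar{B}(\xi(t_2),\|\xi(t_2)-\xi(\tau)\|)$ while you make the same step explicit via the polarization identity. Your version has the small advantage of exhibiting the quantitative lower bound $2\langle q-p,r-p\rangle\ge\|q-p\|^2$, which is exactly the form used later in the Hadamard and $\CAT(0)$ generalizations.
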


\begin{proof}
Let $t_1<t_2$ without loss of generality.
Then the self-contractedness \eqref{eq:SC} implies $\|\xi(t_2)-\xi(t_1)\| \le \|\xi(t_2)-\xi(\tau)\|$.
This means that $\xi(t_1) \in \bar{B}(\xi(t_2),\|\xi(t_2)-\xi(\tau)\|) \setminus \{\xi(\tau)\}$,
from which we obtain the claim \eqref{eq:pi/2} (see Figure~\ref{fig1}).
$\qedd$
\end{proof}

\begin{figure}
\centering\begin{picture}(400,200)

\put(200,35){\line(0,1){80}}
\put(200,35){\line(1,1){50}}

\put(190,20){$\xi(\tau)$}
\put(190,123){$\xi(t_2)$}
\put(245,93){$\xi(t_1)$}

\put(199,34){\rule{2pt}{2pt}}
\put(199,114){\rule{2pt}{2pt}}
\put(249,84){\rule{2pt}{2pt}}

\thicklines
\put(200,115){\circle{160}}
\qbezier(200,50)(207,50)(210.5,45.5)

\end{picture}
\caption{Proof of Lemma~\ref{lm:key}}\label{fig1}
\end{figure}

The property described in Lemma~\ref{lm:key} characterizes self-contracted curves
among $C^1$-curves.
Indeed, if $\xi$ is $C^1$, then \eqref{eq:pi/2} yields for all $t \in (0,T)$
\[ \frac{d}{dt}\Big[ \|\xi(T)-\xi(t)\|^2 \Big]
 =-2 \|\dot{\xi}(t)\| \|\xi(T)-\xi(t)\| \cos\angle\big( \dot{\xi}(t),\xi(T)-\xi(t) \big) \le 0 \]
(by letting $t=\tau$ and $T=t_2$).
Therefore $\xi$ is self-contracted.

The following fundamental geometric lemma (cf.\ \cite[Lemma~3.2]{D3L})
enables us to control the \emph{radius} of the set of directions
$\xi(t)-\xi(\tau)$ for $t>\tau$.
We stress that this step is not dimension-free.
Let us give a proof along \cite{D3L} for completeness.
Denote by $\Sph^{n-1} \subset \R^n$ the unit sphere equipped with
the angle (intrinsic) distance $\angle$.

\begin{lemma}[Radii of sets of diameter $\le \pi/2$]\label{lm:rad}
There exists a constant $\theta_n \in [0,\pi/2)$ depending only on the dimension $n$ such that,
for any subset $\Delta \subset \Sph^{n-1}$ with $\diam_{\angle}(\Delta) \le \pi/2$,
we can find some $\bar{v} \in \Sph^{n-1}$ for which
$\Delta \subset \bar{B}_{\angle}(\bar{v},\theta_n)$ holds.
\end{lemma}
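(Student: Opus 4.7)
The strategy is to choose $\bar{v}$ by normalizing the center of the smallest enclosing Euclidean ball of $\Delta$, and to extract the dimension-dependent constant from Jung's classical theorem.

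First I would translate the angular hypothesis into a Euclidean diameter bound: for any $v,w \in \Delta$, the assumption $\angle(v,w) \le \pi/2$ forces $\langle v,w \rangle \ge 0$, hence $\|v-w\|^2 = 2-2\langle v,w \rangle \le 2$. Thus $\Delta$ has Euclidean diameter at most $\sqrt{2}$, and Jung's theorem in $\R^n$ places $\Delta$ inside a closed Euclidean ball $\bar{B}(p,r) \subset \R^n$ of radius $r \le \sqrt{n/(n+1)} <1$. This inequality encodes all of the dimension dependence; everything downstream will be elementary.

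Next, since every $v \in \Delta$ has $\|v\|=1>r$, the center $p$ cannot be the origin, so $\bar{v} := p/\|p\| \in \Sph^{n-1}$ is well defined. Expanding $\|v-p\|^2 \le r^2$ yields a lower bound for $\langle v,p \rangle$; dividing by $\|p\|$ transforms this into a lower bound for $\cos \angle(v,\bar{v})$ of the form $\|p\|/2 + (1-r^2)/(2\|p\|)$. Applying AM-GM to this expression removes the dependence on $\|p\|$ and leaves $\cos \angle(v,\bar{v}) \ge \sqrt{1-r^2} \ge 1/\sqrt{n+1}$. Setting
\[ \theta_n := \arccos\!\left( \frac{1}{\sqrt{n+1}} \right) \in \left[0,\frac{\pi}{2}\right) \]
then finishes the argument.

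The main conceptual step, and the only place where the dimension plays a role, is the invocation of Jung's theorem; without it one could at most put $\Delta$ in a hemisphere (radius $\pi/2$), which would be useless for the later rectifiability arguments. Once the bound $r<1$ is secured, the nonvanishing of $p$ is automatic and the angle estimate via AM-GM is routine.
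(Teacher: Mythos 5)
Your proof is correct, and it takes a genuinely different route from the paper's. The paper constructs the center $\bar{v}$ by hand: it extracts a maximal $\pi/3$-separated subset $\{v_i\}_{i=1}^m\subset\Delta$, bounds $m\le 3^n$ by a packing (doubling) argument, shows $\langle w,\sum_i v_i\rangle>1/2$ for every $w\in\Delta$ using $\diam_{\angle}(\Delta)\le\pi/2$, and normalizes the sum, arriving at $\theta_n=\arccos[(2\cdot 3^n)^{-1}]$. You instead convert the angular diameter bound into the Euclidean bound $\diam(\Delta)\le\sqrt{2}$, invoke Jung's theorem to enclose $\Delta$ in a ball of radius $r\le\sqrt{n/(n+1)}<1$, and normalize its center; the AM--GM step cleanly eliminates $\|p\|$ and yields $\theta_n=\arccos(1/\sqrt{n+1})$. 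All steps check out (in particular $r<1$ guarantees both $p\neq 0$ and the positivity needed for AM--GM). Your constant is dramatically better --- $\cos\theta_n$ decays like $n^{-1/2}$ rather than $3^{-n}$ --- and since $\ve_n=(\cos\theta_n)/3$ feeds directly into the length bound of Theorem~\ref{th:Erect}, this would improve the final estimate from exponential to polynomial in $n$. What the paper's packing-based construction buys in exchange is portability: it is exactly the argument that survives in the $\CAT(0)$ setting (Condition~\ref{cd:m} and Lemma~\ref{lm:mrad}, where the barycenter is taken in the tangent cone), whereas the sharp dimension-dependent form of Jung's theorem you use is tied to inner-product geometry, and the dimension-free $\CAT(0)$ version of Jung's inequality only gives $r\le\diam/\sqrt{2}$, which with $\diam=\sqrt 2$ yields $r\le 1$ and hence no usable gap.
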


We will see in the proof that one can take $\theta_n=\arccos[(2 \cdot 3^n)^{-1}]$.
In low dimensions this estimate can be improved by a direct argument,
for instance, clearly $\theta_1=0$ and $\theta_2=\pi/4$.
We put $\angle$ in the subscripts to clarify that the angle distance is employed,
namely $\diam_{\angle}(\Delta)=\sup_{v,w \in \Delta} \angle(v,w)$.

\begin{proof}
The idea of the proof is to construct a ``barycenter'' of $\Delta$.
Since $\Delta$ may not be uniformly distributed,
we first take an arbitrary maximal set $\{v_i\}_{i=1}^m \subset \Delta$ fulfilling the condition:
\[ \angle(v_i,v_j) \ge \frac{\pi}{3} \qquad \text{if}\ i \neq j. \]
Such a set has cardinality at most $3^n$ (namely $m \le 3^n$)
by the standard argument due to the (metric) doubling condition (see \cite[Lemma~3.1]{D3L}).
Given $w \in \Delta$ one can choose $i_0$ such that $\angle(w,v_{i_0})<\pi/3$.
Combining this with the hypothesis $\diam_{\angle}(\Delta) \le \pi/2$, we have
\[ \bigg\langle w,\sum_{i=1}^m v_i \bigg\rangle
 \ge \langle w,v_{i_0} \rangle >\frac{1}{2}. \]
Thus $\sum_{i=1}^m v_i \neq 0$ and we put
\[ \bar{v} := \bigg\| {\sum_{i=1}^m v_i} \bigg\|^{-1} \cdot \sum_{i=1}^m v_i \in \Sph^{n-1}. \]
Together with the trivial bound $\| {\sum}_{i=1}^m v_i \| \le m \le 3^n$,
we conclude that
\[ \langle w,\bar{v} \rangle
 > \bigg( 2 \cdot \bigg\| {\sum_{i=1}^m} v_i \bigg\| \bigg)^{-1}
 \ge \frac{1}{2 \cdot 3^n} \]
as desired.
$\qedd$
\end{proof}

The above lemma explains how the dimension of the space comes into play.
Let us compare Lemma~\ref{lm:rad} with the following example in the same spirit as \cite[Example~2.2]{D3R}
(a counter-example to the rectifiability in an infinite-dimensional space).

\begin{example}[Infinite dimensional case]\label{ex:infd}
Consider the curve $\xi:[0,\infty) \lra L^2(\R)$ defined by $\xi(t):=f_t$, where
\[ f_t(x) := \begin{cases} 1 & \text{for}\ x \in [t,t+1], \\ 0 & \text{for}\ x \not\in [t,t+1]. \end{cases} \]
This is continuous, self-contracted and bounded ($\|f_t\|_{L^2}=1$ for all $t \ge 1$),
whereas
\[ \sL(\xi) \ge \sum_{i=1}^{\infty} \|f_i-f_{i-1}\|_{L^2}
 =\sum_{i=1}^{\infty} \sqrt{2} =\infty. \]
Observe also that $\langle f_i,f_j \rangle_{L^2}=0$ for any distinct $i,j \in \N$,
while the analogue to Lemma~\ref{lm:rad} does not hold since
$\lim_{i \to \infty} \langle f,f_i \rangle_{L^2} =0$ for all $f \in L^2(\R)$.
\end{example}

\subsection{Rectifiability}\label{ssc:Erect}

Lemmas~\ref{lm:key} and \ref{lm:rad} show that
the assumption in \cite[Claim~2]{D3L} is fulfilled.
Then we can follow the lines of \cite[\S 3]{D3L} (Claims~1, 2) to prove the rectifiability.
To this end, we shall derive from the lemmas that the size of the trajectory
\[ \Xi(t) := \{ \xi(s) \,|\, s \in [t,\ell) \}, \]
measured in the direction given by Lemma~\ref{lm:rad}, is (uniformly) decreasing.
Given $v \in \Sph^{n-1}$, we define $P_v:\R^n \lra \R$ as
the orthogonal projection to the line in the direction $v$ (identified with $\R$),
namely $P_v(x):=\langle x,v \rangle$.
Then, for $\Omega \subset \R^n$, we define $\Pi_v(\Omega) \subset \R$
as the closed convex hull of $P_v(\Omega)$
(in other words, the smallest closed interval including $P_v(\Omega)$).

\begin{lemma}[Directional decrease]\label{lm:claim2}
Let $\xi:[0,\ell) \lra \R^n$ be a self-contracted curve, $\tau \in [0,\ell)$ and
$\bar{v}_{\tau} \in \Sph^{n-1}$ be given by Lemma~$\ref{lm:rad}$ for
\[ \Delta =\bigg\{ \frac{\xi(t)-\xi(\tau)}{\|\xi(t)-\xi(\tau)\|}
 \,\bigg|\, t \in (\tau,\ell),\, \xi(t) \neq \xi(\tau) \bigg\} \subset \Sph^{n-1}. \]
Then, for any $T \in (\tau,\ell)$ and
$v \in \Sph^{n-1}$ with $\|v-\bar{v}_{\tau}\| \le \ve_n$, we have
\begin{equation}\label{eq:WW}
\big| \Pi_v \big( \Xi(T) \big) \big|
 \le \big| \Pi_v \big( \Xi(\tau) \big) \big|
 -\ve_n \|\xi(T)-\xi(\tau)\|,
\end{equation}
where $|\cdot|$ denotes the $1$-dimensional Lebesgue measure
and $\ve_n=(\cos \theta_n)/3=(2 \cdot 3^{n+1})^{-1}$.
\end{lemma}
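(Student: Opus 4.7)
The plan is to compare the closed intervals $\Pi_v(\Xi(T))\subset\Pi_v(\Xi(\tau))$ by measuring the gap between their left endpoints, exploiting the fact that $p:=\xi(\tau)\in\Xi(\tau)$ projects strictly to the left of $\Pi_v(\Xi(T))$ in the direction $v$. If $\xi(T)=p$ the right-hand side of \eqref{eq:WW} vanishes and the inequality reduces to $|\Pi_v(\Xi(T))|\le|\Pi_v(\Xi(\tau))|$, which follows from $\Xi(T)\subset\Xi(\tau)$. So henceforth I assume $\xi(T)\neq p$.

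The first ingredient is a uniform half-space estimate. For each $s>\tau$ with $\xi(s)\neq p$, set $u_s:=(\xi(s)-p)/\|\xi(s)-p\|\in\Delta$; Lemma~\ref{lm:rad} gives $\langle u_s,\bar{v}_\tau\rangle\ge\cos\theta_n=3\ve_n$, and Cauchy--Schwarz together with $\|v-\bar{v}_\tau\|\le\ve_n$ yields $\langle u_s,v\rangle\ge 3\ve_n-\ve_n=2\ve_n$. Hence $P_v(\xi(s))-P_v(p)\ge 2\ve_n\|\xi(s)-p\|$ for every such $s$.

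The second, more delicate, ingredient is a uniform lower bound on $\|\xi(s)-p\|$ for $s\ge T$. Applying the self-contractedness \eqref{eq:SC} with $t_1=\tau$, $t_2=T$, $t_3=s$ gives $\|\xi(s)-\xi(T)\|\le\|\xi(s)-p\|$, while the reverse triangle inequality gives $\|\xi(s)-\xi(T)\|\ge\|\xi(T)-p\|-\|\xi(s)-p\|$; eliminating $\|\xi(s)-\xi(T)\|$ forces $\|\xi(s)-p\|\ge\tfrac{1}{2}\|\xi(T)-p\|$. In particular $\xi(s)\neq p$, so the previous step applies and combines with this bound to yield $P_v(\xi(s))\ge P_v(p)+\ve_n\|\xi(T)-p\|$ for every $s\in[T,\ell)$.

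To conclude, I would write $\Pi_v(\Xi(\tau))=[a_\tau,b_\tau]$ and $\Pi_v(\Xi(T))=[a_T,b_T]$. Since $p\in\Xi(\tau)$ one has $a_\tau\le P_v(p)$, while the uniform bound above gives $a_T\ge P_v(p)+\ve_n\|\xi(T)-p\|$; together with the obvious $b_\tau\ge b_T$, this yields $|\Pi_v(\Xi(\tau))|-|\Pi_v(\Xi(T))|=(b_\tau-b_T)+(a_T-a_\tau)\ge\ve_n\|\xi(T)-p\|$, which is \eqref{eq:WW}. I expect the second ingredient to be the main obstacle: without the lower bound $\|\xi(s)-p\|\ge\|\xi(T)-p\|/2$ one only obtains $a_T\ge P_v(p)$, which guarantees the weak monotonicity $|\Pi_v(\Xi(T))|\le|\Pi_v(\Xi(\tau))|$ but is too weak for the quantitative decrease needed to iterate towards the rectifiability.
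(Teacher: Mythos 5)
Your proof is correct and follows essentially the same route as the paper: the cone estimate $\langle \xi(s)-\xi(\tau),v\rangle \ge 2\ve_n\|\xi(s)-\xi(\tau)\|$ obtained from Lemma~\ref{lm:rad} plus the perturbation $\|v-\bar v_\tau\|\le\ve_n$, the lower bound $\|\xi(s)-\xi(\tau)\|\ge\tfrac12\|\xi(T)-\xi(\tau)\|$ from self-contractedness (this is exactly \eqref{eq:tT}), and the comparison of the left endpoints of the nested intervals $\Pi_v(\Xi(T))\subset\Pi_v(\Xi(\tau))$ using $P_v(\xi(\tau))$ as the reference point. The only cosmetic differences are that the paper derives \eqref{eq:tT} as an average rather than by eliminating $\|\xi(s)-\xi(T)\|$, and phrases the conclusion via set containment rather than explicit endpoints.
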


Recall from Lemma~\ref{lm:key} that $\diam_{\angle}(\Delta) \le \pi/2$,
thereby Lemma~\ref{lm:rad} applies.
We took the convex hull of trajectories since $\xi$ may not be continuous.

\begin{proof}
Note that $|\Pi_v(\cdot)|$ is invariant under parallel translations,
thus we consider $\Xi(T)-\xi(\tau)$ instead of $\Xi(T)$.
Since there is nothing to prove if $\xi(T)=\xi(\tau)$, we assume $\xi(T) \neq \xi(\tau)$.
Then it follows from Lemma~\ref{lm:dcon}(ii) that $\xi(t) \neq \xi(\tau)$ for all $t \in [T,\ell)$.
By (the proof of) Lemma~\ref{lm:rad}, we find
\[ \langle \xi(t)-\xi(\tau),\bar{v}_{\tau} \rangle
 >3\ve_n \|\xi(t)-\xi(\tau)\| \]
for all $t \in [T,\ell)$.
Moreover, since $\|\xi(t)-\xi(T)\| \le \|\xi(t)-\xi(\tau)\|$ by the self-contractedness, we have
\begin{equation}\label{eq:tT}
 \|\xi(t)-\xi(\tau)\| \ge \frac{\|\xi(t)-\xi(\tau)\| +\|\xi(t)-\xi(T)\|}{2}
 \ge \frac{\|\xi(T)-\xi(\tau)\|}{2}
\end{equation}
(see Figure~\ref{fig2}).
Hence we have,
for $v \in \Sph^{n-1}$ with $\|v-\bar{v}_{\tau}\| \le \ve_n$,
\begin{align*}
\langle \xi(t)-\xi(\tau),v \rangle
&\ge \langle \xi(t)-\xi(\tau),\bar{v}_{\tau} \rangle - \|\xi(t)-\xi(\tau)\| \cdot \|v-\bar{v}_{\tau}\| \\
&> (3\ve_n -\ve_n) \|\xi(t)-\xi(\tau)\|
 \ge \ve_n \|\xi(T)-\xi(\tau)\|.
\end{align*}
This means that
$\Pi_v(\Xi(T)-\xi(\tau)) \subset (\ve_n \|\xi(T)-\xi(\tau)\|,\infty)$,
while $0 \in \Pi_v(\Xi(\tau)-\xi(\tau))$ and clearly $\Xi(T) \subset \Xi(\tau)$.
Therefore we obtain the claim \eqref{eq:WW}.
$\qedd$
\end{proof}

\begin{figure}
\centering\begin{picture}(400,150)

\put(200,35){\line(4,1){160}}
\put(200,35){\line(-4,1){160}}

\put(200,35){\vector(0,1){95}}

\put(190,20){$\xi(\tau)$}
\put(190,135){$\bar{v}_{\tau}$}
\put(291,93){$\xi(T)$}
\put(252,105){$\xi(t)$}
\put(180,53){$\theta_n$}

\put(199,34){\rule{2pt}{2pt}}
\put(299,84){\rule{2pt}{2pt}}
\put(259,95){\rule{2pt}{2pt}}

\put(254,72){\line(1,-2){4}}
\put(254,72){\line(-2,-1){8}}

\qbezier(200,50)(189,48)(185,39)

\thicklines
\put(230,100){\line(1,-2){40}}
\qbezier[40](200,35)(250,60)(300,85)

\end{picture}
\caption{Proof of Lemma~\ref{lm:claim2}}\label{fig2}
\end{figure}

The estimate \eqref{eq:WW} tells that $\Xi(T)$ is ``smaller'' than $\Xi(\tau)$
in the directions close to $\bar{v}_{\tau}$.
Since $\bar{v}_{\tau}$ depends on $\tau$, we take the average in directions,
called the \emph{mean width}, as follows:
\begin{equation}\label{eq:W}
\sW(\Omega) := \frac{1}{\bA(\Sph^{n-1})} \int_{\Sph^{n-1}} |\Pi_v(\Omega)| \,\bA(dv)
\end{equation}
for $\Omega \subset \R^n$, where $\bA$ denotes the standard measure on $\Sph^{n-1}$.
Clearly $\sW(\Omega) \le \diam(\Omega)$ holds.
Now we are ready to complete the proof of the rectifiability of $\xi$.

\begin{theorem}[Rectifiability in $\R^n$, \cite{D3L}]\label{th:Erect}
Let $\xi:[0,\ell) \lra \R^n$ be a self-contracted curve.
Then we have
\[ \sL(\xi) \le C_n \cdot \sW\big( \Xi(0) \big), \]
where $\Xi(0)=\xi([0,\ell))$ is the image of $\xi$ and $C_n \ge 1$ depends only on $n$.
In particular, we have $\sL(\xi) \le C_n \cdot \diam(\Xi(0))$.
\end{theorem}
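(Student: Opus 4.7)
The plan is to show that the mean width $\sW(\Xi(t))$ strictly decreases along $\xi$ at a rate controlled from below by the increment $\|\xi(t_i)-\xi(t_{i-1})\|$, and then to telescope this gain over an arbitrary partition before passing to the supremum defining $\sL(\xi)$. Without loss of generality I would assume $\Xi(0)$ is bounded, since otherwise $\sW(\Xi(0))=\infty$ and the inequality is trivial.

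First, fix an arbitrary partition $0=t_0<t_1<\cdots<t_k<\ell$. For each $i=1,\dots,k$, Lemma~\ref{lm:claim2} applied with $\tau=t_{i-1}$ and $T=t_i$ produces a direction $\bar v_{t_{i-1}} \in \Sph^{n-1}$ such that
\[
\big|\Pi_v\big(\Xi(t_i)\big)\big|
 \le \big|\Pi_v\big(\Xi(t_{i-1})\big)\big|
 -\ve_n \|\xi(t_i)-\xi(t_{i-1})\|
\]
for every $v\in\Sph^{n-1}$ with $\|v-\bar v_{t_{i-1}}\|\le \ve_n$. Because $\Xi(t_i)\subset\Xi(t_{i-1})$, the weaker pointwise bound $|\Pi_v(\Xi(t_i))|\le |\Pi_v(\Xi(t_{i-1}))|$ is free for all remaining $v\in\Sph^{n-1}$.

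Next, I would integrate these two bounds over $\Sph^{n-1}$ against $\bA$. The Euclidean cap $\bar B(\bar v_{t_{i-1}},\ve_n)\cap \Sph^{n-1}$ has area at least some dimensional constant $\alpha_n>0$, independent of the center by rotational symmetry of $\Sph^{n-1}$. Dividing by $\bA(\Sph^{n-1})$ and using the definition \eqref{eq:W}, this gives
\[
\sW\big(\Xi(t_{i-1})\big) -\sW\big(\Xi(t_i)\big)
 \ge \frac{\alpha_n \ve_n}{\bA(\Sph^{n-1})}\, \|\xi(t_i)-\xi(t_{i-1})\|.
\]
Summing in $i$, the left-hand side telescopes to $\sW(\Xi(0))-\sW(\Xi(t_k))\le \sW(\Xi(0))$, so
\[
\sum_{i=1}^k \|\xi(t_i)-\xi(t_{i-1})\|
 \le \frac{\bA(\Sph^{n-1})}{\alpha_n \ve_n}\, \sW\big(\Xi(0)\big).
\]
Taking the supremum over all partitions yields the desired bound with $C_n:=\bA(\Sph^{n-1})/(\alpha_n \ve_n)$, which is $\ge 1$ since $\alpha_n\le \bA(\Sph^{n-1})$ and $\ve_n<1$. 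The addendum $\sL(\xi)\le C_n\,\diam(\Xi(0))$ is immediate from the elementary inequality $\sW(\Omega)\le \diam(\Omega)$ noted just after \eqref{eq:W}.

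The hard part, I expect, is not the geometry but the bookkeeping: verifying that $v\mapsto|\Pi_v(\Omega)|$ is measurable (indeed continuous) on $\Sph^{n-1}$ for bounded $\Omega$ so that $\sW$ and the above integrations are legitimate, and producing an explicit lower bound $\alpha_n$ for the area of a spherical cap of Euclidean radius $\ve_n=(2\cdot 3^{n+1})^{-1}$. Crucially, the direction $\bar v_{t_{i-1}}$ changes with $i$, but because the gain $\alpha_n\ve_n/\bA(\Sph^{n-1})$ does not depend on $\bar v_{t_{i-1}}$, the telescoping argument goes through unimpeded.
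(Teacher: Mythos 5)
Your proposal is correct and follows essentially the same route as the paper: apply Lemma~\ref{lm:claim2} on each subinterval of a partition, integrate the directional decrease over the spherical cap around $\bar{v}_{t_{i-1}}$ (whose area is a dimensional constant, called $a_n$ in the paper and $\alpha_n$ by you), use monotonicity of $|\Pi_v(\cdot)|$ under $\Xi(t_i)\subset\Xi(t_{i-1})$ elsewhere, telescope the mean width, and take the supremum over partitions. The only cosmetic difference is that you flag measurability of $v\mapsto|\Pi_v(\Omega)|$ explicitly, which the paper leaves implicit.
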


\begin{proof}
The assertion is void when $\Xi(0)$ is unbounded,
thus we assume $\diam(\Xi(0))<\infty$.
We first take the average of \eqref{eq:WW} to obtain an estimate of the mean width.
For $\tau,T$ and $\bar{v}_{\tau}$ as in Lemma~\ref{lm:claim2}, put
$\Sigma_{\tau}:=B_{\|\cdot\|}(\bar{v}_{\tau},\ve_n) \cap \Sph^{n-1}$.
Then it follows from \eqref{eq:WW} and $\Xi(T) \subset \Xi(\tau)$ that
\[ \sW\big( \Xi(T) \big) \le \sW\big( \Xi(\tau) \big)
 - \frac{\bA(\Sigma_{\tau})}{\bA(\Sph^{n-1})} \cdot \ve_n \|\xi(T)-\xi(\tau)\|. \]
Since $\bA(\Sigma_{\tau})$ is independent of $\tau$,
we will denote it by $a_n$.
Given an arbitrary partition $0=t_0<t_1<t_2< \cdots <t_k<\ell$,
we apply the above estimate to $(\tau,T)=(t_{i-1},t_i)$ and find
\[ \frac{a_n}{\bA(\Sph^{n-1})} \cdot \ve_n \sum_{i=1}^k \|\xi(t_i)-\xi(t_{i-1})\|
 \le \sW\big( \Xi(0) \big) -\sW\big( \Xi(t_k) \big) \le \sW \big( \Xi(0) \big). \]
Taking the supremum over all partitions, we complete the proof.
$\qedd$
\end{proof}

Notice from the proof that the constant
\[ C_n =\frac{\bA(\Sph^{n-1})}{a_n \ve_n}
 =\frac{(2 \cdot 3^{n+1}) \bA(\Sph^{n-1})}{\bA(B_{\|\cdot\|}(v,(2 \cdot 3^{n+1})^{-1}) \cap \Sph^{n-1})} \]
(with arbitrary $v \in \Sph^{n-1}$) is concretely given and explicitly calculated.

\section{Rectifiability in Hadamard manifolds}\label{sc:Riem}

In the setting of Riemannian manifolds,
we can similarly verify the angle estimate (Lemma~\ref{lm:key}),
whereas the discussion in \S \ref{ssc:Erect} relying on the projection
and the mean width needs to be modified.
The rectifiability of bounded self-contracted curves in Riemannian manifolds
has been shown in \cite[Theorem~2.1]{D3R} by a compactness argument
without any quantitative bound of length.
Here we give a quantitative estimate for the specific class of \emph{Hadamard manifolds}
(complete, simply connected Riemannian manifolds of non-positive sectional curvature).
This provides a perspective toward an extension to $\CAT(0)$-spaces.

Throughout the section except Theorem~\ref{th:Rrect},
let $(M,g)$ be an Hadamard manifold of dimension $n \ge 2$
equipped with the Riemannian distance function $d$.
We will denote by $T_xM$ (resp.\ $U_xM$) the tangent space
(resp.\ the unit tangent sphere) at $x \in M$.
We briefly recall some necessary facts on Hadamard manifolds
(we refer to \cite{Ch} for the basics of comparison Riemannian geometry).
The \emph{Alexandrov--Toponogov triangle comparison theorem} shows that,
for any triplet $x,y,z \in M$ and any minimal geodesic $\gamma:[0,1] \lra M$
from $y$ to $z$, we have
\begin{equation}\label{eq:Hada}
d^2\big( x,\gamma(s) \big) \le (1-s)d^2(x,y) +sd^2(x,z) -(1-s)sd^2(y,z)
\end{equation}
for all $s \in [0,1]$
(this inequality will be employed as the definition of $\CAT(0)$-spaces, see the next section).
In other words, the squared distance function $d^2(x,\cdot)$ is $2$-convex.
We remark that equality holds in \eqref{eq:Hada} in Euclidean spaces.
It follows from \eqref{eq:Hada} that $M$ is contractible
and any two points $x,y \in M$ are connected by a unique minimal geodesic,
that will be denoted by $\gamma_{xy}:[0,1] \lra M$.
Furthermore, the exponential map $\exp_x:T_xM \lra M$ is diffeomorphic
(the \emph{Cartan--Hadamard theorem})
and the inverse map $\exp_x^{-1}$ is well-defined ($\exp_x^{-1}(y)=\dot{\gamma}_{xy}(0)$).

Given $x \in M$ and $y,z \in M \setminus \{x\}$,
we denote by $\angle[yxz]$ the angle between the initial velocities
of the minimal geodesics $\gamma_{xy}$ and $\gamma_{xz}$, namely
$\angle[yxz]:=\angle_x(\dot{\gamma}_{xy}(0),\dot{\gamma}_{xz}(0))$.
Let us also introduce the \emph{Euclidean comparison angle}
$\widetilde{\angle}[yxz] \in [0,\pi]$ for later use, defined by
\begin{equation}\label{eq:Eangle}
\cos \widetilde{\angle}[yxz]
 =\frac{d^2(x,y) +d^2(x,z) -d^2(y,z)}{2d(x,y) d(x,z)}.
\end{equation}
It follows from \eqref{eq:Hada} that $\angle[yxz] \le \widetilde{\angle}[yxz]$,
and we have $\angle[yxz] =\widetilde{\angle}[yxz]$ in $\R^n$.

The following generalization of Lemma~\ref{lm:key} is straightforward.

\begin{lemma}[Angle estimate]\label{lm:Rkey}
Let $\xi:[0,\ell) \lra M$ be a self-contracted curve.
Then, for each $\tau \in [0,\ell)$ and all $t_1,t_2 \in (\tau,\ell)$ with $\xi(t_1),\xi(t_2) \neq \xi(\tau)$,
we have
\[ \angle \big[ \xi(t_1) \xi(\tau) \xi(t_2) \big] <\frac{\pi}{2}. \]
\end{lemma}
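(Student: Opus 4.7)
The plan is to reduce the Hadamard case to the Euclidean computation by going through the Euclidean comparison angle $\widetilde{\angle}$ defined in \eqref{eq:Eangle}. The key observation recorded just before the lemma is that the triangle comparison inequality \eqref{eq:Hada} yields $\angle[yxz] \le \widetilde{\angle}[yxz]$ for every triple $x, y, z \in M$ with $y, z \neq x$. Thus it suffices to bound the comparison angle away from $\pi/2$, which is an entirely Euclidean computation involving only the three pairwise distances.

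Concretely, I would first assume $t_1 < t_2$ without loss of generality (the other case is symmetric). Applying the self-contractedness \eqref{eq:SC} with $(t_1, t_2, t_3) = (\tau, t_1, t_2)$ gives
\[
d\big( \xi(t_1), \xi(t_2) \big) \le d\big( \xi(\tau), \xi(t_2) \big).
\]
Plugging this into the definition \eqref{eq:Eangle} of the comparison angle at $\xi(\tau)$, the numerator becomes
\[
d^2\big( \xi(\tau), \xi(t_1) \big) + d^2\big( \xi(\tau), \xi(t_2) \big) - d^2\big( \xi(t_1), \xi(t_2) \big) \ge d^2\big( \xi(\tau), \xi(t_1) \big) > 0,
\]
where positivity uses the hypothesis $\xi(t_1) \neq \xi(\tau)$. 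Hence $\cos \widetilde{\angle}[\xi(t_1)\xi(\tau)\xi(t_2)] > 0$, so $\widetilde{\angle}[\xi(t_1)\xi(\tau)\xi(t_2)] < \pi/2$. Combining with the comparison inequality $\angle \le \widetilde{\angle}$ yields the desired strict bound.

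There is essentially no obstacle: the non-positive curvature condition makes geodesic triangles thinner than their Euclidean models, which is exactly what is needed to push the Euclidean angle estimate of Lemma~\ref{lm:key} through to the Riemannian setting. The same argument will transfer verbatim to $\CAT(0)$-spaces in the next section, since only the defining inequality \eqref{eq:Hada} and the definition of the comparison angle are used; in particular, no smoothness of $\xi$ and no appeal to the exponential map are required.
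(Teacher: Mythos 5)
Your proof is correct, and it reaches the same quantitative conclusion as the paper by a slightly different and more elementary route. The paper's own argument applies the $2$-convexity \eqref{eq:Hada} along the geodesic $\gamma_{\xi(\tau)\xi(t_1)}$ with reference point $\xi(t_2)$ and then differentiates at $s=0$ via the first variation formula, obtaining $2\langle \dot{\gamma}(0),\dot{\eta}(0)\rangle \ge d^2(\xi(\tau),\xi(t_1))>0$; you instead bound the Euclidean comparison angle directly from \eqref{eq:Eangle} and invoke the recorded inequality $\angle \le \widetilde{\angle}$ (itself a consequence of \eqref{eq:Hada}). Unwinding your computation gives exactly the same estimate
$2d(\xi(\tau),\xi(t_1))\,d(\xi(\tau),\xi(t_2))\cos\angle[\xi(t_1)\xi(\tau)\xi(t_2)] \ge d^2(\xi(\tau),\xi(t_1))$,
so nothing is lost. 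What your route buys is that it bypasses the first variation formula entirely: it uses only the comparison of angles with their Euclidean models, which is available verbatim in any $\CAT(0)$-space, whereas the paper's proof of the corresponding Lemma~\ref{lm:angle} explicitly appeals to Theorem~\ref{th:1vf}; so your closing remark about the argument transferring unchanged to Section~\ref{sc:CAT+} is accurate. The only point worth stating explicitly is that the comparison angle at $\xi(\tau)$ is well defined precisely because both $\xi(t_1)\neq\xi(\tau)$ and $\xi(t_2)\neq\xi(\tau)$ are assumed (so the denominator in \eqref{eq:Eangle} is nonzero); you use the first of these for the strict positivity of the numerator, and the second is needed for the quotient to make sense at all.
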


\begin{proof}
Assume $t_1<t_2$ without loss of generality
and put $\gamma:=\gamma_{\xi(\tau)\xi(t_1)}$ and $\eta:=\gamma_{\xi(\tau)\xi(t_2)}$.
It follows from \eqref{eq:Hada} and $d(\xi(t_1),\xi(t_2)) \le d(\xi(\tau),\xi(t_2))$ that,
for any $s \in (0,1)$,
\begin{align*}
d^2\big( \gamma(s),\xi(t_2) \big)
&\le (1-s) d^2\big( \xi(\tau),\xi(t_2) \big) +s d^2\big( \xi(t_1),\xi(t_2) \big)
 -(1-s)s d^2\big( \xi(\tau),\xi(t_1) \big) \\
&\le d^2\big( \xi(\tau),\xi(t_2) \big) -(1-s)s d^2\big( \xi(\tau),\xi(t_1) \big).
\end{align*}
Then the claim follows from the first variation formula for the distance function as
\[ 2\langle \dot{\gamma}(0),\dot{\eta}(0) \rangle
 =-\frac{d}{ds} \Big[ d^2\big( \gamma(s),\xi(t_2) \big) \Big] \Big|_{s=0}
 \ge d^2\big( \xi(\tau),\xi(t_1) \big) >0, \]
where $\langle \cdot,\cdot \rangle$ denotes the inner product of $T_{\xi(\eta)}M$
induced from the Riemannian metric $g$.
$\qedd$
\end{proof}

Applying Lemma~\ref{lm:rad} to the tangent space $T_{\xi(\tau)}M$,
we find some unit tangent vector $\bar{v}_{\tau} \in U_{\xi(\tau)}M$ such that
\begin{equation}\label{eq:bv}
\cos \angle_{\xi(\tau)} \big( \bar{v}_{\tau},\exp^{-1}_{\xi(\tau)} \big( \xi(t) \big) \big)
 \ge \frac{1}{2 \cdot 3^n} =3\ve_n
\end{equation}
for all $t>\tau$ with $\xi(t) \neq \xi(\tau)$,
where $\angle_{\xi(\tau)}$ denotes the angle in $T_{\xi(\eta)}M$ induced from $g$.
Different from the Euclidean situation,
besides the \emph{vertical} perturbation of the vector $\bar{v}_{\tau}$ as in Lemma~\ref{lm:claim2},
we need to consider a \emph{horizontal} perturbation as well to discuss the mean width
in the Riemannian setting.
We introduce for this purpose the set
\[ \Omega_{\tau,\sigma} :=\bigg\{ x \in M \,\bigg|\,
 0<d\big( \xi(\tau),x \big) <\sigma,\,
 \angle_{\xi(\tau)} \big( \!\exp^{-1}_{\xi(\tau)}(x),\bar{v}_{\tau} \big)
 \ge \pi -2\arcsin\bigg( \frac{\ve_n}{2} \bigg) \bigg\} \]
for $\sigma>0$.
For each $x \in \Omega_{\tau,\sigma}$, we define
\[ V_x :=\frac{1}{d(x,\xi(\tau))} \exp_x^{-1} \!\big( \xi(\tau) \big) \,\in U_xM, \qquad
\overline{V}\!_x :=\frac{1}{d(x,\xi(\tau))} \exp^{-1}_{\xi(\tau)}(x)  \,\in U_{\xi(\tau)}M \]
(see Figure~\ref{fig3}).
By definition we have
\begin{equation}\label{eq:Vx}
\angle_{\xi(\tau)}(\overline{V}\!_x,\bar{v}_\tau) \ge \pi -2\arcsin\bigg( \frac{\ve_n}{2} \bigg),
\end{equation}
in other words, $\|\bar{v}_{\tau}+\overline{V}\!_x\| \le \ve_n$.

\begin{figure}
\centering\begin{picture}(400,190)

\put(200,135){\line(4,1){160}}
\put(200,135){\line(-4,1){160}}

\put(200,135){\vector(0,1){30}}
\qbezier(192,70)(200,100)(200,135)

\put(177,146){$\xi(\tau)$}
\put(196,172){$\bar{v}_{\tau}$}
\put(262,180){$\xi(t)$}
\put(196,58){$\Omega_{\tau,\sigma}$}
\put(186,61){$x$}
\put(172,82){$V_x$}
\put(179,110){$\overline{V}\!_x$}

\put(199,134){\rule{2pt}{2pt}}
\put(269,170){\rule{2pt}{2pt}}
\put(191,69){\rule{2pt}{2pt}}

\put(200,135){\line(1,-4){28}}
\put(200,135){\line(-1,-4){28}}

\put(210,129){\line(-1,4){2}}
\put(210,129){\line(-4,-1){8}}
\put(190,129){\line(1,4){2}}
\put(190,129){\line(4,-1){8}}

\thicklines
\put(200,135){\line(1,-4){20}}
\put(200,135){\line(-1,-4){20}}
\qbezier(180,55)(200,45)(220,55)

\put(200,135){\vector(0,-1){30}}
\put(192,70){\vector(1,4){6}}

\end{picture}
\caption{$\Omega_{\tau,\sigma}$, $V_x$, $\overline{V}\!_x$}\label{fig3}
\end{figure}

Given $v \in U_xM$, similarly to \S \ref{ssc:Erect},
we define $P_v:T_xM \lra \R$ as the orthogonal projection to $\R v$ (identified with $\R$),
namely $P_v(w):=\langle w,v \rangle$,
and denote by $\Pi_v(\Omega) \subset \R$ the closed convex hull of $P_v(\exp_x^{-1}(\Omega))$
for $\Omega \subset M$.
Clearly $|\Xi_v(\Omega)|=|\Xi_{-v}(\Omega)|$ holds.
We deduce from \eqref{eq:Vx} and \eqref{eq:bv} that,
similarly to Lemma~\ref{lm:claim2},
\begin{equation}\label{eq:PV}
P_{\overline{V}\!_x} \big( {\exp}_{\xi(\tau)}^{-1} \big( \xi(t) \big) \big)
 \le \big\langle {\exp}_{\xi(\tau)}^{-1} \big( \xi(t) \big),-\bar{v}_{\tau} \big\rangle
 +\ve_n d\big( \xi(\tau),\xi(t) \big)
 \le -2\ve_n d\big( \xi(\tau),\xi(t) \big)
\end{equation}
for $0 \le \tau<t<\ell$.
This implies, together with \eqref{eq:tT},
\[ \big| \Pi_{\overline{V}\!_x} \big( \Xi(T) \big) \big|
 \le \big| \Pi_{\overline{V}\!_x} \big( \Xi(\tau) \big) \big|
 -\ve_n d \big( \xi(\tau),\xi(T) \big) \]
for $0 \le \tau <T<\ell$.
We extend this to tangent vectors at $x$ close to $V_x$ as follows.

\begin{lemma}[Directional decrease]\label{lm:Rcl2}
Let $\xi:[0,\ell) \lra M$ be a self-contracted curve and,
given $\tau \in [0,\ell)$ and $\sigma>0$,
define $\Omega_{\tau,\sigma}$, $V$ and $\overline{V}$ as above.
Then, for any $T \in (\tau,\ell)$, $x \in \Omega_{\tau,\sigma}$ and
$v \in U_xM$ with $\|v-V_x\| \le \ve_n$,
we have
\begin{equation}\label{eq:RWW}
\big| \Pi_v \big( \Xi(T) \big) \big|
 \le \big| \Pi_v \big( \Xi(\tau) \big) \big|
 -\frac{\ve_n}{2} d \big( \xi(\tau),\xi(T) \big).
\end{equation}
\end{lemma}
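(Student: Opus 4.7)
The plan is to follow the structure of the proof of Lemma~\ref{lm:claim2}, replacing its two Euclidean ingredients by their Hadamard counterparts. The first is Alexandrov--Toponogov comparison \eqref{eq:Hada}, used to transport the directional estimate \eqref{eq:PV} at $\xi(\tau)$ into geometric information about the triangle based at the remote point $x$; the second is the $1$-Lipschitz property of $\exp_x^{-1}$, which is a standard consequence of \eqref{eq:Hada} applied to a comparison triangle, and which will be used to pass from the preferred direction $V_x$ to a nearby unit vector $v$.

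More precisely, I would first dispose of the case $\xi(T)=\xi(\tau)$, in which \eqref{eq:RWW} is immediate from $\Xi(T)\subset\Xi(\tau)$, and otherwise invoke Lemma~\ref{lm:dcon}(ii) to ensure $\xi(t)\neq\xi(\tau)$ for every $t\in[T,\ell)$. Since $\overline{V}\!_x$ is the unit vector at $\xi(\tau)$ pointing toward $x$, the estimate \eqref{eq:PV} translates into $\cos\angle[x\,\xi(\tau)\,\xi(t)]\le -2\ve_n$, and Alexandrov comparison yields the same bound for $\cos\widetilde{\angle}[x\,\xi(\tau)\,\xi(t)]$ in the Euclidean comparison triangle $\tilde{x}\tilde{\xi(\tau)}\tilde{\xi(t)}$. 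A one-line computation in that Euclidean triangle (projecting $\tilde{\xi(t)}$ onto the line through $\tilde{x}$ and $\tilde{\xi(\tau)}$) then gives
\[ d\big(x,\xi(t)\big) \cos\widetilde{\angle}[\xi(\tau)\,x\,\xi(t)] \;\ge\; d\big(x,\xi(\tau)\big) + 2\ve_n\, d\big(\xi(\tau),\xi(t)\big). \]
Because the obtuse comparison angle at $\tilde{\xi(\tau)}$ forces the comparison angle at $\tilde{x}$ to be acute (the three Euclidean angles summing to $\pi$), both sides are non-negative; another application of Alexandrov, $\cos\angle\ge\cos\widetilde{\angle}$, multiplied through by $d(x,\xi(t))\ge 0$, upgrades this to $\langle V_x,\exp_x^{-1}(\xi(t))\rangle\ge d(x,\xi(\tau))+2\ve_n\, d(\xi(\tau),\xi(t))$. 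Using $\langle V_x,\exp_x^{-1}(\xi(\tau))\rangle=d(x,\xi(\tau))$, this is precisely
\[ P_{V_x}\!\big(\exp_x^{-1}(\xi(t))\big) - P_{V_x}\!\big(\exp_x^{-1}(\xi(\tau))\big) \;\ge\; 2\ve_n\, d\big(\xi(\tau),\xi(t)\big). \]

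To perturb $V_x$ to $v$, I would use the $1$-Lipschitz property of $\exp_x^{-1}$ to bound $|\langle v-V_x, \exp_x^{-1}(\xi(t))-\exp_x^{-1}(\xi(\tau))\rangle|\le \ve_n\, d(\xi(\tau),\xi(t))$, and combine with the purely metric bound \eqref{eq:tT} to conclude
\[ P_v\!\big(\exp_x^{-1}(\xi(t))\big) - P_v\!\big(\exp_x^{-1}(\xi(\tau))\big) \;\ge\; \frac{\ve_n}{2}\, d\big(\xi(\tau),\xi(T)\big) \qquad \text{for all } t\in[T,\ell). \]
Since $\xi(\tau)\in\Xi(\tau)\setminus\Xi(T)$ and $\Xi(T)\subset\Xi(\tau)$, exactly the same interval-comparison used at the end of the proof of Lemma~\ref{lm:claim2} then delivers \eqref{eq:RWW}. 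The main delicacy I expect is the sign bookkeeping in the two applications of Alexandrov's inequality: $\cos\angle\ge\cos\widetilde{\angle}$ survives multiplication by $d(x,\xi(t))$ only after the cosine at $\tilde{x}$ is shown to be non-negative, and this is precisely where the hypothesis $x\in\Omega_{\tau,\sigma}$ (which forces $\overline{V}\!_x$ to be nearly antipodal to $\bar{v}_\tau$, hence the angle at $\xi(\tau)$ to be obtuse) is genuinely needed.
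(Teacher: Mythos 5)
Your proposal is correct and follows essentially the same route as the paper: the identity $d(x,\xi(t))\cos\widetilde{\angle}[\xi(\tau)x\xi(t)]+d(\xi(\tau),\xi(t))\cos\widetilde{\angle}[x\xi(\tau)\xi(t)]=d(x,\xi(\tau))$ combined with $\angle\le\widetilde{\angle}$ at both vertices, \eqref{eq:PV}, the $1$-Lipschitz property of $\exp_x^{-1}$, and \eqref{eq:tT}. The only (harmless) extra step is your sign bookkeeping: since $\cos\angle\ge\cos\widetilde{\angle}$ is multiplied by the non-negative factor $d(x,\xi(t))$, the inequality survives regardless of the sign of the cosine at $\tilde{x}$, so the acuteness argument there is not actually needed.
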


\begin{proof}
Assume $\xi(T) \neq \xi(\tau)$ without loss of generality and fix $t>T$.
We shall estimate $\cos\angle[\xi(\tau) x \xi(t)]$ from below
by means of the non-positive curvature.
We first observe from $\angle \le \widetilde{\angle}$ and \eqref{eq:Eangle} that
\begin{align}
&d\big( x,\xi(t) \big) \cos\angle[\xi(\tau) x \xi(t)]
 +d\big( \xi(\tau),\xi(t) \big) \cos\angle[x \xi(\tau) \xi(t)] \nonumber\\
&\ge d\big( x,\xi(t) \big) \cos\widetilde{\angle}[\xi(\tau) x \xi(t)]
 +d\big( \xi(\tau),\xi(t) \big) \cos\widetilde{\angle}[x \xi(\tau) \xi(t)] \nonumber\\
&= d\big( x,\xi(\tau) \big). \label{eq:VV}
\end{align}
Together with \eqref{eq:PV}, we obtain for any $t>T$
\[ P_{V_x}\big( {\exp}_x^{-1} \big( \xi(t) \big) \big)
 \ge d\big( x,\xi(\tau) \big) -P_{\overline{V}\!_x}\big( {\exp}_{\xi(\tau)}^{-1} \big( \xi(t) \big) \big)
 \ge d\big( x,\xi(\tau) \big) +2\ve_n d\big( \xi(\tau),\xi(t) \big). \]
This implies, for $v \in U_xM$ with $\|v-V_x\| \le \ve_n$,
\begin{align*}
&P_v \big( {\exp}_x^{-1} \big( \xi(t) \big) \big)
 -P_v \big( {\exp}_x^{-1} \big( \xi(\tau) \big) \big)
 = \big\langle {\exp}_x^{-1} \big( \xi(t) \big) -{\exp}_x^{-1} \big( \xi(\tau) \big),v \big\rangle \\
&\ge P_{V_x} \big( {\exp}_x^{-1} \big( \xi(t) \big) \big)
 -P_{V_x} \big( {\exp}_x^{-1} \big( \xi(\tau) \big) \big)
 -\ve_n \big\| {\exp}_x^{-1} \big( \xi(t) \big) -{\exp}_x^{-1} \big( \xi(\tau) \big) \big\|  \\
&\ge (2\ve_n -\ve_n) d\big( \xi(\tau),\xi(t) \big)
 \ge \frac{\ve_n}{2} d\big( \xi(\tau),\xi(T) \big).
\end{align*}
We used the non-positive curvature to see
\[ \big\| {\exp}_x^{-1} \big( \xi(t) \big) -{\exp}_x^{-1} \big( \xi(\tau) \big) \big\|
 \le d\big( \xi(\tau),\xi(t) \big). \]
This completes the proof.
$\qedd$
\end{proof}

Now we are ready to prove the rectifiability.
We remark that, similarly to the Euclidean case,
the possible discontinuity of the curve causes no difficulty nor difference in our proof.

\begin{theorem}[Rectifiability in Hadamard manifolds]\label{th:Hrect}
Let $\xi:[0,\ell) \lra M$ be a self-contracted curve in a Hadamard manifold.
Then we have
\[ \sL(\xi) \le C\big( \Xi(0) \big) \diam\!\big( \Xi(0) \big) <\infty, \]
where $\Xi(0)=\xi([0,\ell))$ and the constant $C(\Xi(0)) \ge 1$ depends only on $n=\dim M$
and the volume of a neighborhood of $\Xi(0)$
$($see \eqref{eq:Rrect} for the precise estimate$)$.
\end{theorem}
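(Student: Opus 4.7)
The plan is to adapt the Euclidean strategy of \S\ref{ssc:Erect} by integrating the directional decrease of Lemma~\ref{lm:Rcl2} against a ``Riemannian mean width''. First, I would fix $\sigma>0$ (to be chosen proportional to $D:=\diam(\Xi(0))$), let $N:=\{x \in M \,|\, d(x,\Xi(0))<\sigma\}$ denote the open $\sigma$-neighborhood of the trajectory, and define
\[
\sW(\Omega) := \int_N \int_{U_xM} \big| \Pi_v(\Omega) \big| \, d\bA(v) \, d\mathrm{vol}_g(x)
\qquad (\Omega \subset M),
\]
where $\bA$ is the standard measure on $U_xM \cong \Sph^{n-1}$. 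Because $\Xi(T) \subset \Xi(\tau)$ whenever $\tau<T$, the inclusion $\Pi_v(\Xi(T)) \subset \Pi_v(\Xi(\tau))$ holds pointwise, so $\sW$ is non-increasing along the curve. The quantitative decrease comes from Lemma~\ref{lm:Rcl2}: for each $\tau<T$ and each pair $(x,v)$ in the active set
\[
A_\tau := \big\{ (x,v) \,\big|\, x \in \Omega_{\tau,\sigma},\ v \in U_xM,\ \|v-V_x\| \le \varepsilon_n \big\},
\]
inequality \eqref{eq:RWW} applies; moreover $\Omega_{\tau,\sigma} \subset B(\xi(\tau),\sigma) \subset N$ by the triangle inequality, so integrating over $A_\tau$ yields
\[
\sW\big( \Xi(\tau) \big) - \sW\big( \Xi(T) \big)
\ge \frac{\varepsilon_n}{2}\, d\big( \xi(\tau),\xi(T) \big)\, a_n\, \mathrm{vol}_g\big( \Omega_{\tau,\sigma} \big),
\]
where $a_n$ is the $\bA$-measure of the Euclidean $\varepsilon_n$-cap on the unit sphere (independent of the basepoint $x$).

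The crucial input from non-positive curvature is a uniform lower bound for $\mathrm{vol}_g(\Omega_{\tau,\sigma})$. Here I would invoke G\"unther's inequality: in an Hadamard manifold $\exp_{\xi(\tau)}$ is volume-expanding, so $\mathrm{vol}_g(\Omega_{\tau,\sigma})$ is at least the Euclidean volume in $T_{\xi(\tau)}M$ of the solid cone of height $\sigma$ and half-aperture $\arcsin(\varepsilon_n/2)$, which equals a constant $c_n \sigma^n$ depending only on $n$. Consequently, for any partition $0 = t_0 < t_1 < \cdots < t_k < \ell$, telescoping the previous estimate over $(\tau,T)=(t_{i-1},t_i)$ gives
\[
\sum_{i=1}^k d\big( \xi(t_{i-1}),\xi(t_i) \big)
\le \frac{2}{\varepsilon_n\, a_n\, c_n\, \sigma^n}\, \sW\big( \Xi(0) \big).
\]

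To finish I would bound $\sW(\Xi(0))$ using the diameter. Since $\exp_x^{-1}$ is $1$-Lipschitz on $M$ (the non-positive curvature estimate already used at the end of the proof of Lemma~\ref{lm:Rcl2}), one has $|\Pi_v(\Xi(0))| \le \diam(\Xi(0)) = D$ for every $(x,v)$, and hence $\sW(\Xi(0)) \le D \cdot \bA(\Sph^{n-1}) \cdot \mathrm{vol}_g(N)$. Choosing $\sigma := D$ and passing to the supremum over partitions produces the explicit estimate
\begin{equation}\label{eq:Rrect}
\sL(\xi) \le \frac{2 \bA(\Sph^{n-1})\, \mathrm{vol}_g\!\big( N_D(\Xi(0)) \big)}{\varepsilon_n\, a_n\, c_n\, D^{n-1}},
\end{equation}
which has the form $C(\Xi(0)) \cdot \diam(\Xi(0))$ with $C(\Xi(0))$ depending only on $n$ and $\mathrm{vol}_g(N_D(\Xi(0)))$. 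The main obstacle is exactly the uniform volume lower bound on the conic neighborhoods $\Omega_{\tau,\sigma}$; G\"unther's inequality handles it effortlessly here, but in a general $\CAT(0)$-space no such bound exists without extra local structure, which anticipates the uniform assumptions to be imposed in \S\ref{ssc:Crect}.
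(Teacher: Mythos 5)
Your proposal is correct and follows essentially the same route as the paper: integrating the directional decrease \eqref{eq:RWW} over the conic sets $\Omega_{\tau,\sigma}$ against a Riemannian mean width, bounding $\mathrm{vol}_g(\Omega_{\tau,\sigma})$ from below by $c_n\sigma^n$ via Euclidean volume comparison (G\"unther), and telescoping over partitions. The only differences are cosmetic (you omit the normalization of $\sW$ and fix $\sigma=\diam(\Xi(0))$ at the end, whereas the paper leaves $\sigma$ free).
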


\begin{proof}
Fix $\sigma>0$ and $\tau \in [0,\ell)$,
take $\bar{v}_{\tau} \in U_{\xi(\tau)}M$, $\Omega_{\tau,\sigma}$ and $V$ as above,
and let $\Omega$ be the $\sigma$-neighborhood of $\Xi(0)$
(thereby $\Omega_{\tau,\sigma} \subset \Omega$).
We put $\Sigma_x:=B_{\|\cdot\|}(V_x,\ve_n) \cap U_xM$
for $x \in \Omega_{\tau,\sigma}$.
Then, by integrating \eqref{eq:RWW} and noticing  $\Xi(T) \subset \Xi(\tau)$ for $\tau<T$,
we have
\begin{align*}
&\int_{\Omega} \int_{U_xM} \big| \Pi_v \big( \Xi(T) \big) \big|
 \,\bA_x(dv) \bV\!_g(dx) \\
&\le \int_{\Omega} \int_{U_xM} \big| \Pi_v \big( \Xi(\tau) \big) \big|
 \,\bA_x(dv) \bV\!_g(dx)
 -\int_{\Omega_{\tau,\sigma}} \frac{\ve_n}{2} d\big( \xi(\tau),\xi(T) \big) \bA_x(\Sigma_x) \,\bV\!_g(dx),
\end{align*}
where $\bV\!_g$ is the Riemannian volume measure
and $\bA_x$ is the induced measure on $U_xM$.
We remark that $\bA_x(\Sigma_x)$ is independent of $x$ and denote it by $a_n$
(which indeed coincides with $a_n$ in the proof of Theorem~\ref{th:Erect}).
Moreover, $\bV\!_g(\Omega_{\tau,\sigma})$ is bounded below by using a constant $b_n$
depending on $n$ as $\bV\!_g(\Omega_{\tau,\sigma}) \ge b_n \sigma^n$
(by comparing it with the flat Euclidean case).
Now we define a variant of the mean width \eqref{eq:W} as
(with the same symbol by an abuse of notation)
\[ \sW\big( \Xi(t) \big) :=
 \frac{1}{\bA(\Sph^{n-1}) \bV\!_g(\Omega)}
 \int_{\Omega} \int_{U_xM} \big| \Pi_v \big( \Xi(t) \big) \big|
 \,\bA_x(dv) \bV\!_g(dx). \]
Then we find
\[ \sW\big( \Xi(T) \big) \le \sW\big( \Xi(\tau) \big)
 - \frac{a_n b_n \sigma^n}{\bA(\Sph^{n-1}) \bV\!_g(\Omega)}
 \frac{\ve_n}{2} d\big( \xi(\tau),\xi(T) \big). \]
This yields, by the same argument as the proof of Theorem~\ref{th:Erect},
\begin{equation}\label{eq:Rrect}
\sL(\xi) \le \frac{\bA(\Sph^{n-1}) \bV\!_g(\Omega)}{a_n b_n \sigma^n}
 \frac{2}{\ve_n} \sW\big( \Xi(0) \big)
 \le \frac{\bA(\Sph^{n-1}) \bV\!_g(\Omega)}{a_n b_n \sigma^n}
 \frac{2}{\ve_n} \diam\! \big( \Xi(0) \big).
\end{equation}
This completes the proof.
$\qedd$
\end{proof}

Our careful estimate of the constant $C(\Xi(0))$ reveals on what quantities
the length estimate depends (compare this with the compactness argument in \cite{D3R},
see for example Lemma~2.4 in it).
In \eqref{eq:Rrect}, if the Ricci curvature of $M$ is bounded below by some $K<0$,
then $\bV\!_g(\Omega)$ is bounded above by a constant depending on
$n$, $K$ and $\diam(\Xi(0))+\sigma$ (by the Bishop comparison theorem).

Although we do not pursue such a direction in this article for simplicity,
it seems plausible that one can generalize the argument in this section
to general Riemannian manifolds.
Then there are two issues to be dealt with: Positive curvature and cut points.
In order to handle with the positive curvature,
one employs the spherical comparison theorems.
When cut points exist, the following simple lemma can be used
to decompose the manifold into small pieces without cut points.

\begin{lemma}\label{lm:3B}
Let $\xi:[0,\ell) \lra X$ be a self-contracted curve in a metric space $(X,d)$.
If $\xi(t_1),\xi(t_2) \in B(x,r)$ for some $t_1,t_2 \in [0,\ell)$ with $t_1<t_2$,
then $\xi(t) \in B(x,3r)$ for all $t \in (t_1,t_2)$.
\end{lemma}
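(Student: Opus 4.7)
The plan is to apply the self-contractedness inequality to the triple $(t_1, t, t_2)$ and then chain two triangle inequalities. First I would extract from the hypothesis the bound $d(\xi(t_1),\xi(t_2)) < 2r$ via the triangle inequality through $x$, using $d(x,\xi(t_1)) < r$ and $d(x,\xi(t_2)) < r$.

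Next I would apply self-contractedness \eqref{eq:SC} with $t_1 \le t \le t_2$, which gives
\[
d\big(\xi(t),\xi(t_2)\big) \le d\big(\xi(t_1),\xi(t_2)\big) < 2r.
\]
This is the crucial use of the hypothesis: the intermediate point $\xi(t)$ cannot be farther from the later endpoint $\xi(t_2)$ than the earlier endpoint $\xi(t_1)$ is.

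Finally I would estimate $d(x,\xi(t))$ by the triangle inequality through $\xi(t_2)$:
\[
d\big(x,\xi(t)\big) \le d\big(x,\xi(t_2)\big) + d\big(\xi(t_2),\xi(t)\big) < r + 2r = 3r,
\]
which is the desired conclusion.

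There is no real obstacle here; this is a one-line triangle-inequality argument whose only content is the asymmetric choice to route through $\xi(t_2)$ rather than $\xi(t_1)$, dictated by the direction in which self-contractedness is stated. The factor $3$ is evidently sharp for this kind of two-step bound, though it could presumably be tightened in specific geometries; for the intended application (decomposing the ambient space into cut-point-free pieces while keeping the trajectory controlled), only the qualitative statement that bounded excursions remain bounded is needed.
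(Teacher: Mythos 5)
Your argument is correct and coincides with the paper's proof: both apply self-contractedness to the triple $(t_1,t,t_2)$ to get $d(\xi(t),\xi(t_2))\le d(\xi(t_1),\xi(t_2))<2r$ and then route the triangle inequality through $\xi(t_2)$. Nothing to add.
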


\begin{proof}
We deduce from the triangle inequality and self-contractedness that
\[ d\big( \xi(t),x \big) < d\big( \xi(t),\xi(t_2) \big) +r
 \le d\big( \xi(t_1),\xi(t_2) \big) +r <3r. \]
$\qedd$
\end{proof}

With this lemma we can extend Theorem~\ref{th:Hrect} to general
(not necessarily simply-connected) Riemannian manifolds of non-positive sectional curvature.

\begin{theorem}[Rectifiability in non-positively curved manifolds]\label{th:Rrect}
Let $(M,g)$ be a complete Riemannian manifold of non-positive sectional curvature,
and $\xi:[0,\ell) \lra M$ be a self-contracted curve whose image is bounded.
Then we have $\sL(\xi)<\infty$.
\end{theorem}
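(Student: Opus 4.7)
The plan is to reduce Theorem~\ref{th:Rrect} to the Hadamard-manifold case (Theorem~\ref{th:Hrect}) by a finite covering of the (compact) trajectory closure by small convex balls, combined with Lemma~\ref{lm:3B} to control excursions of $\xi$ between ``visits'' to each ball.

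First, since $M$ is complete and $\Xi(0)$ is bounded, the Hopf--Rinow theorem gives that $K := \overline{\Xi(0)}$ is compact. The convexity radius of $M$ admits a positive lower bound on $K$, so I can fix $\rho>0$ small enough that $\bar{B}(y, 3\rho)$ is strongly convex in $M$ for every $y \in K$. Such a ball is contractible (via radial retraction to its center) and, by the non-positive curvature hypothesis, lifts isometrically to an open convex subset of the universal cover $\tilde{M}$, which is a Hadamard manifold. In particular, distances in $\bar{B}(y, 3\rho)$ measured in $M$ coincide with distances in the lift, so any self-contracted curve whose image lies in $\bar{B}(y, 3\rho)$ is self-contracted as a curve in the Hadamard manifold $\tilde{M}$, and Theorem~\ref{th:Hrect} yields finite length.

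Now cover $K$ by finitely many balls $B(x_1, \rho), \ldots, B(x_N, \rho)$ with $x_j \in K$, and for each nonempty $I_j := \{t \in [0,\ell) \,|\, \xi(t) \in B(x_j, \rho)\}$ set $s_j := \inf I_j$ and $t_j := \sup I_j$. Lemma~\ref{lm:3B} with center $x_j$ and radius $\rho$ gives $\xi((s_j, t_j)) \subseteq B(x_j, 3\rho)$, hence $\sL(\xi|_{(s_j, t_j)}) \le C_j <\infty$ by the preceding paragraph. To aggregate these local bounds, for any partition $0=u_0<\cdots<u_k<\ell$, refine it by adjoining those $s_j, t_j$ that lie in $(0,\ell)$; refinement cannot decrease $\sum_i d(\xi(u_{i-1}), \xi(u_i))$. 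In the refined partition $P'$, every consecutive pair $(v_{i-1}, v_i)$ is contained in some $[s_j, t_j]$: indeed, pick any $t^* \in (v_{i-1}, v_i)$ together with $j$ such that $\xi(t^*) \in B(x_j, \rho)$; then $s_j \le t^* \le t_j$, and consecutiveness in $P'$ forbids either of $s_j, t_j$ from lying strictly between $v_{i-1}$ and $v_i$. Assigning each pair to one such $j$, the pairs attached to a fixed $j$ are disjoint sub-intervals of $[s_j, t_j]$, and their contributions sum to at most $\sL(\xi|_{[s_j, t_j]}) \le C_j$. Summing over $j$ and taking the supremum over partitions yields $\sL(\xi) \le \sum_{j=1}^N C_j <\infty$.

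The main technical subtlety is that $\xi$ need not be continuous, which rules out simple schemes based on exit-times from small balls (a jump could leave a ball before any ``exit time'' is well defined). The covering-and-refinement scheme above circumvents this by organizing \emph{partition pairs} rather than times, relying on Lemma~\ref{lm:3B} to absorb any such discontinuities inside the slightly enlarged convex balls.
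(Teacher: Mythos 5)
Your proof is correct and rests on the same geometric ingredients as the paper's: a finite cover of the bounded image by small balls free of cut-point pairs, Lemma~\ref{lm:3B} to confine the curve to the tripled balls between visits, and Theorem~\ref{th:Hrect} applied on each tripled ball (your lifting to the universal cover is a clean way to literally place each piece inside a Hadamard manifold, a point the paper leaves implicit). Where you genuinely differ is in the aggregation. The paper decomposes $[0,\ell)$ into \emph{consecutive} intervals $I_1\sqcup\cdots\sqcup I_k$, one per ball, using the observation that after $\sup\{t\,|\,\xi(t)\in B_j\}$ the curve never returns to $B_j$; this yields $k\le m$ pieces plus at most $k-1$ jumps, each bounded by $\diam(\Xi(0))$. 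You instead keep the overlapping visit spans $[s_j,t_j]$ and distribute the terms of an arbitrary refined partition among them; this avoids the sequential construction at the cost of your (correct) argument that each consecutive pair of the refined partition sits inside some $[s_j,t_j]$. The two bookkeeping schemes buy essentially the same bound. One small slip: the inequality $\sL(\xi|_{[s_j,t_j]})\le C_j$ is not quite justified, since $\xi(s_j)$ and $\xi(t_j)$ need not lie in $B(x_j,3\rho)$ when the infimum or supremum defining $s_j,t_j$ is not attained (recall $\xi$ may be discontinuous); but the at most two offending terms are each bounded by $\diam(\Xi(0))$, so $\sL(\xi|_{[s_j,t_j]})\le C_j+2\diam(\Xi(0))$ and the conclusion is unaffected --- this extra $2N\diam(\Xi(0))$ is the exact analogue of the paper's $(k-1)\diam(\Xi(0))$ jump term.
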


\begin{proof}
Taking Lemma~\ref{lm:3B} into account,
we choose for each $x \in \Xi(0)=\xi([0,\ell))$ an open ball $B(x,r)$
such that there is no pair of cut points in $B(x,3r)$
(hence $B(x,3r)$ is strictly convex and $\CAT(0)$).
Since $\Xi(0)$ is bounded and $M$ is complete, we can extract a finite family of such balls
$B_j=B(x_j,r_j)$, $j=1,2, \ldots, m$, covering $\Xi(0)$.
We set $\widehat{B}_j:=B(x_j,3r_j)$.

By renumbering we can assume $\xi(0) \in B_1$.
Putting $t_1:=\sup\{ t \in [0,\ell) \,|\, \xi(t) \in B_1 \}$,
we deduce from Lemma~\ref{lm:3B} that $\xi(t) \in \widehat{B}_1$ for all $t \in [0,t_1)$.
We set $I_1:=[0,t_1]$ if $\xi(t_1) \in \widehat{B}_1$ (possibly $I_1=\{0\}$),
and $I_1:=[0,t_1)$ otherwise.
Next, if $I_1=[0,t_1)$, then we choose $j$ ($\neq 1$) with $\xi(t_1) \in B_j$.
Otherwise, we take a sequence $\{s_i\}_{i \in \N} \subset (t_1,\ell)$ converging to $t_1$
such that $\xi(s_i) \in B_j$ for some $j$ and all $i \in \N$.
Again by renumbering we can assume $j=2$ in either case.
Then the same argument as the previous step yields the interval $I_2 \subset [0,\ell) \setminus I_1$
(one of the forms $(t_1,t_2]$, $(t_1,t_2)$, $[t_1,t_2]$ and $[t_1,t_2)$)
such that $\xi(t) \in \widehat{B}_2$ for all $t \in I_2$.

Iterating this procedure provides a decomposition
\[ [0,\ell)=I_1 \sqcup I_2 \sqcup \cdots \sqcup I_k \]
for some $k \le m$.
By the construction
we can apply Theorem~\ref{th:Hrect} to each $\xi|_{I_j}$,
thereby $\sL(\xi|_{I_j})<\infty$.
Between $I_j$ and $I_{j+1}$ there may be a jump,
with a length less than or equal to the diameter of $\Xi(0)$.
Since the number of such jumps is at most $k-1$, we conclude that
\[ \sL(\xi) \le \sum_{j=1}^k \sL(\xi|_{I_j}) +(k-1) \diam\!\big( \Xi(0) \big) <\infty. \]
$\qedd$
\end{proof}

Recall that the rectifiability itself is known for general Riemannian manifolds by \cite{D3R}.
Our argument is somewhat more quantitative
thanks to the concrete estimate in Theorem~\ref{th:Hrect}.

\section{Self-contracted curves in $\CAT(0)$-spaces}\label{sc:CAT}

From this section, we take one step forward to a non-smooth setting of \emph{$\CAT(0)$-spaces}.
A $\CAT(0)$-space is a metric space of non-positive sectional curvature
in the sense of triangle comparison theorem.
We refer to \cite{BH,BBI,Jo-book} for the fundamentals of $\CAT(0)$-spaces
and various applications.
Gradient flows of (semi-)convex functions on $\CAT(0)$-spaces are well studied,
see \cite{AGSbook,Ba,Jo,Ma,OP1} among others
as well as \cite{OP2} for a generalization to $\CAT(1)$-spaces
(metric spaces of sectional curvature $\le 1$).

\subsection{$\CAT(0)$-spaces}\label{ssc:CAT}

A metric space $(X,d)$ is said to be \emph{geodesic}
if any pair of points $x,y \in X$ is joined by a continuous curve
$\gamma:[0,1] \lra X$ satisfying $\gamma(0)=x$, $\gamma(1)=y$
and $d(\gamma(s),\gamma(t))=|t-s|d(x,y)$ for all $s,t \in [0,1]$.
We will call such a curve $\gamma$ a \emph{minimal geodesic} from $x$ to $y$.

\begin{definition}[$\CAT(0)$-spaces]\label{df:CAT}
A geodesic metric space $(X,d)$ is called a \emph{$\CAT(0)$-space} if,
for any three points $x,y,z \in X$ and any minimal geodesic
$\gamma:[0,1] \lra X$ from $y$ to $z$, we have
\begin{equation}\label{eq:CAT}
d^2\big( x,\gamma(s) \big) \le (1-s)d^2(x,y) +sd^2(x,z) -(1-s)sd^2(y,z)
\end{equation}
for all $s \in [0,1]$.
\end{definition}

Recall that every Hadamard manifold is a $\CAT(0)$-space.
In fact, a complete Riemannian manifold is a $\CAT(0)$-space
if and only if it is a Hadamard manifold.
By the condition \eqref{eq:CAT},
one can readily verify that every pair $x,y \in X$ is joined by a unique minimal geodesic,
that we will denote by $\gamma_{xy}:[0,1] \lra X$ similarly to the previous section.
Moreover, $(X,d)$ is contractible.
Singular examples of $\CAT(0)$-spaces include trees, books and Euclidean buildings
(see Section~\ref{sc:expl}).
We refer to \cite[\S 9.1.2]{BBI} as well as \cite{CCHO}
for further interesting and important examples,
and to \cite{HH} (among others) for an application to a problem in optimization theory.

Given two nonconstant geodesics $\gamma,\eta:[0,1] \lra X$ emanating
from a common point $x:=\gamma(0)=\eta(0)$,
we can define the \emph{angle} between them at $x$ by
\[ \angle_x(\gamma,\eta)
 :=\lim_{s,t \to 0} \widetilde{\angle}[\gamma(s) x \eta(t)], \]
where $\widetilde{\angle}[yxz]$ is the Euclidean comparison angle defined in \eqref{eq:Eangle}.
The comparison angle $\widetilde{\angle}[\gamma(s) x \eta(t)]$
is monotone non-increasing as $s,t \to 0$,
thereby the limit indeed exists and we have
$\angle_x(\gamma_{xy},\gamma_{xz}) \le \widetilde{\angle}[yxz]$
for all $x \in X$ and $y,z \in X \setminus \{x\}$.
We will also use the notation $\angle[yxz]:=\angle_x(\gamma_{xy},\gamma_{xz})$
compatible with the previous section.

The following first variation formula for the distance function
plays a fundamental role in the study of gradient flows of (semi-)convex functions.
See for instance \cite[Theorem~4.5.6]{BBI} for a proof of the formula.

\begin{theorem}[First variation formula]\label{th:1vf}
Let $(X,d)$ be a $\CAT(0)$-space and take $x \in X$ and $y,z \in X \setminus \{x\}$.
Then we have
\[ \lim_{s \to 0} \frac{d^2(\gamma_{xy}(s),z)-d^2(x,z)}{s}
 =-2d(x,y)d(x,z) \cos \angle[yxz]. \]
\end{theorem}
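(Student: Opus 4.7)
The plan is to first prove the first-order expansion for $d$ itself,
\[ \lim_{s \to 0} \frac{d(\gamma_{xy}(s), z) - d(x, z)}{s} = -d(x, y) \cos \angle[yxz], \]
and then to obtain the stated $d^2$-version by multiplying through by $d(\gamma(s), z) + d(x, z)$, which tends to $2 d(x, z)$ as $s \to 0$. Writing $\gamma := \gamma_{xy}$ and $\eta := \gamma_{xz}$, the argument splits into matching upper and lower bounds for this linear quotient.

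For the upper bound I would fix an auxiliary parameter $t \in (0, 1]$ and apply the $\CAT(0)$ inequality \eqref{eq:CAT} to the triangle with vertices $x, \gamma(t), \eta(t)$, placing the point $\gamma(s)$ (for $s \le t$) on the side from $x$ to $\gamma(t)$. A short computation (essentially the law of cosines in the Euclidean comparison triangle) yields
\[ d^2\bigl( \gamma(s), \eta(t) \bigr) \le s^2 d^2(x, y) + t^2 d^2(x, z) - 2 s t\, d(x, y) d(x, z) \cos \widetilde{\angle}[\gamma(t) x \eta(t)]. \]
Inserting this into the triangle inequality $d(\gamma(s), z) \le d(\gamma(s), \eta(t)) + (1 - t) d(x, z)$ and Taylor-expanding the square root in $s$ with $t$ held fixed lead to
\[ \limsup_{s \to 0} \frac{d(\gamma(s), z) - d(x, z)}{s} \le -d(x, y) \cos \widetilde{\angle}[\gamma(t) x \eta(t)]. \]
Since $t \in (0, 1]$ was arbitrary, I then let $t \to 0$ and invoke the fact that $\widetilde{\angle}[\gamma(t) x \eta(t)] \to \angle[yxz]$ (a consequence of the joint-limit definition of the angle recalled just above the theorem), obtaining $\limsup \le -d(x, y) \cos \angle[yxz]$. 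For the matching lower bound I use the pointwise estimate $\widetilde{\angle}[\gamma(s) x z] \ge \angle[yxz]$ (which follows from $\widetilde{\angle} \ge \angle$ together with the observation that the geodesic from $x$ to $\gamma(s)$ is a sub-segment of $\gamma$), insert it into the identity
\[ d^2(\gamma(s), z) = s^2 d^2(x, y) + d^2(x, z) - 2 s\, d(x, y) d(x, z) \cos \widetilde{\angle}[\gamma(s) x z], \]
and divide by $s \bigl(d(\gamma(s), z) + d(x, z)\bigr)$ before passing to the limit.

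The main obstacle is the upper bound: applying $\CAT(0)$ directly to $\triangle(x, y, z)$ only produces $\limsup \le -d(x, y) \cos \widetilde{\angle}[yxz]$, which involves the \emph{larger} comparison angle $\widetilde{\angle}[yxz]$ rather than $\angle[yxz]$. The key trick is to replace the full triangle by the shrunk triangle with vertices $x, \gamma(t), \eta(t)$, so that the relevant comparison angle $\widetilde{\angle}[\gamma(t) x \eta(t)]$ can be made arbitrarily close to $\angle[yxz]$ by letting $t \to 0$. Once the two one-sided bounds coincide at $-d(x, y) \cos \angle[yxz]$, the stated $d^2$-formula follows from the chain-rule identity recorded in the first paragraph.
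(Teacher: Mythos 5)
Your proof is correct. Note that the paper itself does not prove this theorem: it defers to \cite[Theorem~4.5.6]{BBI}, so there is no in-paper argument to compare against; what you have written is essentially the standard textbook proof that the citation points to. Both halves check out. For the upper bound, applying \eqref{eq:CAT} to the triangle with vertices $\eta(t)$, $x$, $\gamma(t)$ at parameter $s/t$ and substituting the law-of-cosines expression for $d^2(\gamma(t),\eta(t))$ does give exactly your displayed inequality, and the shrinking-triangle trick is indeed the essential point, since the naive triangle $\triangle(x,y,z)$ only yields the comparison angle $\widetilde{\angle}[yxz]$. For the lower bound, the identity you invoke is just \eqref{eq:Eangle} rearranged, and $\widetilde{\angle}[\gamma(s)xz]\ge\angle[\gamma(s)xz]=\angle[yxz]$ holds because the direction of $\gamma|_{[0,s]}$ at $x$ coincides with that of $\gamma$. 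The only step worth making explicit is the passage $\widetilde{\angle}[\gamma(t)x\eta(t)]\to\angle[yxz]$ as $t\to 0$: the angle is defined as a joint limit over $s,t\to 0$, and the monotonicity of the comparison angle (stated just before the theorem) guarantees that this joint limit exists and hence agrees with the diagonal limit you use. With that remark, the argument is complete and self-contained.
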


We close the subsection with a characterization of $\CAT(0)$-spaces,
see \cite{Re} and \cite[II.1.11]{BH}.

\begin{theorem}[Sub-embedding property]\label{th:subem}
A geodesic metric space $(X,d)$ is a $\CAT(0)$-space
if and only if the following \emph{sub-embedding property} holds$:$
For any four points $w,x,y,z \in X$,
there is a quadruplet $\{\tilde{w},\tilde{x},\tilde{y},\tilde{z}\} \subset \R^2$ such that
\begin{align*}
&\|\tilde{x}-\tilde{w}\|=d(w,x), \quad \|\tilde{y}-\tilde{x}\|=d(x,y), \quad
 \|\tilde{z}-\tilde{y}\|=d(y,z), \quad \|\tilde{w}-\tilde{z}\|=d(z,w), \\
&\|\tilde{y}-\tilde{w}\| \ge d(w,y), \quad \|\tilde{z}-\tilde{x}\| \ge d(x,z).
\end{align*}
\end{theorem}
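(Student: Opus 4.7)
The theorem is an equivalence, so my plan is to prove both directions separately; the reverse direction is a routine specialization, while the forward direction requires a classical planar-geometry argument (Alexandrov's lemma).

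$(\Leftarrow)$: Assume the sub-embedding property. To derive \eqref{eq:CAT}, fix $x, y, z \in X$ and $s \in [0,1]$, set $p := \gamma_{yz}(s)$, and apply the sub-embedding to the cyclic quadruple $(y, p, z, x)$. Since $\|\tilde{y} - \tilde{p}\| + \|\tilde{p} - \tilde{z}\| = d(y,z)$ while $\|\tilde{y} - \tilde{z}\| \ge d(y, z)$, the planar triangle inequality collapses to equality $\|\tilde{y} - \tilde{z}\| = d(y, z)$ and forces collinearity of $\tilde{y}, \tilde{p}, \tilde{z}$ in this order. Thus $\{\tilde{y}, \tilde{z}, \tilde{x}\}$ is a planar comparison triangle for $\triangle(y, z, x)$ with $\tilde{p} = (1-s)\tilde{y} + s\tilde{z}$, and the Euclidean law of cosines yields
\[ \|\tilde{p} - \tilde{x}\|^2 = (1-s)\, d(x, y)^2 + s\, d(x, z)^2 - s(1-s)\, d(y, z)^2; \]
combining with the sub-embedding bound $d(p, x) \le \|\tilde{p} - \tilde{x}\|$ establishes \eqref{eq:CAT}.

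$(\Rightarrow)$: Assume $(X, d)$ is a $\CAT(0)$-space, fix $w, x, y, z \in X$, and construct the quadruple by gluing two comparison triangles. Choose $\tilde{w}, \tilde{x}, \tilde{y} \in \R^2$ realizing the side lengths of $\triangle(w, x, y)$, and $\tilde{z} \in \R^2$ with $\|\tilde{w} - \tilde{z}\| = d(w, z)$ and $\|\tilde{y} - \tilde{z}\| = d(y, z)$ lying on the side of the line $\tilde{w}\tilde{y}$ opposite to $\tilde{x}$. By construction the four cyclic side-length equalities hold and $\|\tilde{w} - \tilde{y}\| = d(w, y)$ settles the first diagonal bound; the remaining task, and the main obstacle, is to verify $\|\tilde{x} - \tilde{z}\| \ge d(x, z)$. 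The Euclidean law of cosines at $\tilde{w}$ shows that, when $\widetilde{\angle}[xwy] + \widetilde{\angle}[ywz] \le \pi$, this inequality is equivalent to the subadditivity of comparison angles
\[ \widetilde{\angle}[xwy] + \widetilde{\angle}[ywz] \ge \widetilde{\angle}[xwz], \]
which strictly refines the Alexandrov angle triangle inequality $\angle[xwy] + \angle[ywz] \ge \angle[xwz]$ (itself a consequence of \eqref{eq:CAT} and Theorem~\ref{th:1vf}).

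The needed upgrade from $\angle$ to $\widetilde{\angle}$ is exactly the content of Alexandrov's planar lemma: if two Euclidean comparison triangles are glued along a common edge with opening angle at most $\pi$, then the comparison angle of the outer triangle $(w, x, z)$ (whose sides match $d(w, x)$, $d(w, z)$ and $d(x, z)$) is bounded above by this opening angle; applied here, this yields $\widetilde{\angle}[xwz] \le \widetilde{\angle}[xwy] + \widetilde{\angle}[ywz]$ as required. If instead $\widetilde{\angle}[xwy] + \widetilde{\angle}[ywz] > \pi$, I apply the symmetric argument at the vertex $\tilde{y}$: since the six interior angles of the two comparison triangles sum to $2\pi$, we necessarily have $\widetilde{\angle}[wyx] + \widetilde{\angle}[wyz] \le \pi$, and the same Alexandrov lemma, now applied at $\tilde{y}$ to bound $\widetilde{\angle}[xyz]$ by $\widetilde{\angle}[wyx] + \widetilde{\angle}[wyz]$, closes the proof.
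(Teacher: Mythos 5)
The paper gives no proof of this statement (it is Re\v{s}etnjak's theorem, quoted with references to \cite{Re} and \cite[II.1.11]{BH}), so your argument must stand on its own. Your ($\Leftarrow$) direction is correct: applying the sub-embedding property to $(y,\gamma_{yz}(s),z,x)$ forces $\tilde{y},\tilde{p},\tilde{z}$ to be collinear, and \eqref{eq:CAT} follows from the Euclidean identity together with $d(p,x)\le\|\tilde{p}-\tilde{x}\|$.

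The ($\Rightarrow$) direction has a genuine gap, and in fact the construction itself fails. You fix $\|\tilde{w}-\tilde{y}\|=d(w,y)$ by gluing two comparison triangles along this diagonal, and you reduce $\|\tilde{x}-\tilde{z}\|\ge d(x,z)$ to the comparison-angle subadditivity $\widetilde{\angle}[xwz]\le\widetilde{\angle}[xwy]+\widetilde{\angle}[ywz]$ (or its analogue at $y$). That inequality is \emph{false} in general $\CAT(0)$-spaces, and the glued figure need not be a sub-embedding. Take the tripod (three unit segments joined at a point $c$), with $w=c$ and $x,y,z$ the three endpoints: then $\widetilde{\angle}[wyx]=\widetilde{\angle}[wyz]=0$ while $\widetilde{\angle}[xyz]=\pi/3$, the glued configuration degenerates to $\tilde{x}=\tilde{z}$, so $\|\tilde{x}-\tilde{z}\|=0<2=d(x,z)$, and indeed \emph{no} planar quadruple with $\|\tilde{w}-\tilde{y}\|=d(w,y)=1$ satisfies all the required relations; the true sub-embedding has $\|\tilde{w}-\tilde{y}\|=\sqrt{3}>d(w,y)$. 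Two things go wrong in your write-up. First, the ``Alexandrov planar lemma'' you invoke is not a planar statement: the glued figure is determined by the five distances $d(w,x),d(x,y),d(y,w),d(y,z),d(z,w)$ and carries no information about $d(x,z)$, which is exactly the quantity entering $\widetilde{\angle}[xwz]$; what you cite as a lemma is verbatim the assertion to be proved. Second, a correct proof must (a) use the $\CAT(0)$ inequality inside $X$ --- when $[\tilde{x},\tilde{z}]$ meets $[\tilde{w},\tilde{y}]$ at $\tilde{p}$, pull $\tilde{p}$ back to $p\in[w,y]$, obtain $d(x,p)\le\|\tilde{x}-\tilde{p}\|$ and $d(p,z)\le\|\tilde{p}-\tilde{z}\|$ from \eqref{eq:CAT}, and conclude via the triangle inequality in $X$ --- and (b) in the non-convex case, open up the quadrilateral using the genuine Alexandrov lemma, which strictly increases $\|\tilde{w}-\tilde{y}\|$ above $d(w,y)$; this is permitted since the statement only demands $\ge$ on the diagonals. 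Your proposal never allows the diagonal to lengthen, which is precisely why it fails on the tripod.
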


In other words, any quadrilateral $wxyz \subset X$ admits an embedding
$\tilde{w}\tilde{x}\tilde{y}\tilde{z} \subset \R^2$ such that all four edges have the same lengths
and the diagonal edges of $\tilde{w}\tilde{x}\tilde{y}\tilde{z}$
are not shorter than the corresponding edges of $wxyz$.

\subsection{Gradient curves of quasi-convex functions}\label{ssc:gf}

We first recall some fundamental facts on the construction of gradient curves in metric spaces,
for those we refer to the book \cite{AGSbook}.
Let $(X,d)$ be a metric space and $f:X \lra \R$ be lower semi-continuous.
Given $\tau>0$, define the \emph{Moreau--Yosida approximation}
$f_{\tau}$ of $f$ by
\[ f_{\tau}(x):=\inf_{z \in X} \bigg\{ f(z) +\frac{d^2(x,z)}{2\tau} \bigg\}. \]
Then we define
\[ \cJ^f_{\tau}(x) :=\bigg\{ z \in X \,\bigg|\,
 f(z)+\frac{d^2(x,z)}{2\tau} =f_{\tau}(x) \bigg\}. \]
A point in $\cJ^f_{\tau}(x)$ is regarded as an approximation of
the point on the gradient curve of $f$ at time $\tau$ from $x$.
In this manner one can construct a \emph{discrete-time gradient curve} as follows:
\begin{equation}\label{eq:disc}
x_{\bm{\tau}}^0:=x_0\ \text{and recursively choose arbitrary}\ 
 x_{\bm{\tau}}^k \in \cJ^f_{\tau_k}(x_{\bm{\tau}}^{k-1})
 \ \text{for}\ k \in \N,
\end{equation}
where $\bm{\tau}:=\{\tau_k\}_{k \in \N}$ is a sequence of positive numbers.

Now we consider a $\CAT(0)$-space $(X,d)$
and shall see that discrete-time gradient curves of quasi-convex functions are self-contracted
(Proposition~\ref{pr:CATsc}).
This extends the Euclidean result in \cite[Proposition~4.16]{D3L}.
The quasi-convexity \eqref{eq:qc} is naturally generalized to this setting:
A function $f:X \lra \R$ is said to be \emph{quasi-convex} if we have
\[ f\big( \gamma_{xy}(s) \big) \le \max\{ f(x),f(y) \} \qquad
 \text{for all}\ x,y \in X,\, s \in (0,1). \]
A related notion of \emph{$\lambda$-convexity} for $\lambda \in \R$ is defined by
\[ f\big( \gamma_{xy}(s) \big) \le (1-s)f(x) +sf(y) -\frac{\lambda}{2}(1-s)s d^2(x,y)
 \quad \text{for all}\ x,y \in X,\, s \in (0,1). \]
We say that $f$ is \emph{semi-convex} if it is $\lambda$-convex for some $\lambda<0$.
Recall that the $\CAT(0)$-property \eqref{eq:CAT} is understood as the $2$-convexity of
the squared distance function $d^2(x,\cdot)$.

Let us begin with an auxiliary lemma on the well-posedness
of discrete-time gradient curves.

\begin{lemma}\label{lm:CATqc}
Let $(X,d)$ be a complete $\CAT(0)$-space and $f:X \lra \R$ be
a lower semi-continuous quasi-convex function.
Assume in addition that $f$ satisfies one of the following two conditions$:$
\begin{enumerate}[{\rm (1)}]
\item $\inf_X f >-\infty;$
\item $f$ is $\lambda$-convex for some $\lambda<0$.
\end{enumerate}
Then, for any $x \in X$ and $\tau>0$ $(\tau< (-\lambda)^{-1}$ in the case of $(2))$,
$\cJ^f_{\tau}(x)$ is nonempty.
Moreover, $\cJ^f_{\tau}(x)$ consists of a single point if $(2)$ holds
and $\tau< (-\lambda)^{-1}$.
\end{lemma}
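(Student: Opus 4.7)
The plan is to apply the direct method of the calculus of variations to the lower semi-continuous functional $F(z) := f(z) + d^2(x,z)/(2\tau)$, which is well-defined on $X$ in both cases.

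In case (1), $F$ is bounded below since $F \ge \inf_X f > -\infty$, and $F(x) = f(x) < \infty$, so $\inf_X F$ is finite. First I take a minimizing sequence $\{z_n\}$ with $F(z_n) \to \inf F$. Setting $a_n := f(z_n)$ and $r_n := d(x, z_n)$, both sequences are bounded ($a_n \ge \inf f$ and $r_n^2/(2\tau) \le F(z_n) - \inf f$). After passing to a subsequence, I may assume $a_n \to \alpha$ and $r_n \to r$, so that $\alpha + r^2/(2\tau) = \inf F$. For the midpoint $m_{n,k} := \gamma_{z_n z_k}(1/2)$, quasi-convexity gives $f(m_{n,k}) \le \max\{a_n, a_k\}$, while the $\CAT(0)$ inequality \eqref{eq:CAT} applied at $s=1/2$ yields $d^2(x, m_{n,k}) \le (r_n^2 + r_k^2)/2 - d^2(z_n, z_k)/4$. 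Combining these,
\[
F(m_{n,k}) \le \max\{a_n, a_k\} + \frac{r_n^2 + r_k^2}{4\tau} - \frac{d^2(z_n, z_k)}{8\tau}.
\]
As $n, k \to \infty$ along the subsequence, the first two terms tend to $\alpha + r^2/(2\tau) = \inf F$, and since $F(m_{n,k}) \ge \inf F$, this forces $d(z_n, z_k) \to 0$. Completeness and the lower semi-continuity of $F$ then produce a limit $z_\infty \in \cJ^f_\tau(x)$.

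In case (2), the $\lambda$-convexity of $f$ combined with the $2$-convexity of $d^2(x, \cdot)$ shows that $F$ is $c$-convex with $c := \lambda + 1/\tau > 0$; that is, $F$ is strongly convex. Uniqueness is immediate: for any two minimizers $z_1, z_2$, the $c$-convex midpoint inequality gives $F(\gamma_{z_1 z_2}(1/2)) \le \inf F - (c/8) d^2(z_1, z_2)$, forcing $z_1 = z_2$. For existence, standard arguments for strongly convex lower semi-continuous functions on complete $\CAT(0)$-spaces ensure $\inf F > -\infty$, after which the midpoint inequality applied to a minimizing sequence yields $d^2(z_n, z_k) \le (8/c)[(F(z_n)+F(z_k))/2 - \inf F] \to 0$, so Cauchyness and convergence to the unique minimum follow as before.

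The main obstacle lies in case (1): since quasi-convexity provides only a $\max$ bound rather than a genuine convex combination, $F$ itself need not be convex and the clean strong-convexity Cauchy argument is unavailable. The workaround is to pass to a subsequence along which $f(z_n)$ converges, so that $\max\{a_n,a_k\}$ is effectively controlled by $\alpha$; combined with the negative curvature term $-d^2(z_n,z_k)/(8\tau)$ produced by \eqref{eq:CAT}, this is just enough to squeeze out the Cauchy estimate. Note that $F$ is real-valued and lower semi-continuous throughout, so no additional regularity of $\xi$ or $f$ is needed to close the argument.
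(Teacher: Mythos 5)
Your proposal is correct and follows essentially the same route as the paper: a minimizing sequence for $f+d^2(x,\cdot)/(2\tau)$, a subsequence along which $f(z_n)$ converges so that the quasi-convex $\max$-bound at midpoints can be compared with $\inf F$, the $\CAT(0)$ inequality supplying the $-d^2(z_n,z_k)/(8\tau)$ term that forces Cauchyness, and strong $(\lambda+\tau^{-1})$-convexity handling case (2). The only cosmetic difference is that the paper bounds $\max\{f(z_i),f(z_j)\}$ by $(f(z_i)+f(z_j)+\ve)/2$ rather than passing to the limit $\alpha$, and it spells out the one-line argument that strong convexity plus lower semi-continuity gives $\inf_X F>-\infty$ in case (2).
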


\begin{proof}
(1)
Notice first that $f_{\tau}(x) \ge \inf_X f >-\infty$.
Take a sequence $\{z_i\}_{i \in \N} \subset X$ such that
\[ \lim_{i \to \infty} \bigg\{ f(z_i) +\frac{d^2(x,z_i)}{2\tau} \bigg\} =f_{\tau}(x). \]
Given any $\ve>0$, choose $N \in \N$ so that
\[ f(z_i) +\frac{d^2(x,z_i)}{2\tau} \le f_{\tau}(x) +\ve
 \qquad \text{for all}\ i \ge N. \]
Since $\inf_X f \le f(z_i) \le f_{\tau}(x) +\ve$,
by extracting a subsequence and letting $N$ larger if necessary,
we can assume that $f(z_i)$ is convergent and
$|f(z_i)-f(z_j)| \le \ve$ holds for all $i,j \ge N$.
Then, for any $i,j \ge N$, we deduce from the quasi-convexity of $f$
and \eqref{eq:CAT} that
\begin{align*}
f_{\tau}(x) &\le f\bigg( \gamma_{z_i z_j} \bigg( \frac{1}{2} \bigg) \bigg)
 +\frac{d^2(x,\gamma_{z_i z_j}(1/2))}{2\tau} \\
&\le \max\{ f(z_i),f(z_j) \}
 +\frac{1}{4\tau} \bigg\{ d^2(x,z_i)+d^2(x,z_j) -\frac{1}{2}d^2(z_i,z_j) \bigg\} \\
&\le \frac{f(z_i)+f(z_j)+\ve}{2}
 +\frac{1}{4\tau} \bigg\{ d^2(x,z_i)+d^2(x,z_j) -\frac{1}{2}d^2(z_i,z_j) \bigg\} \\
&\le f_{\tau}(x) +\ve +\frac{\ve}{2} -\frac{1}{8\tau} d^2(z_i,z_j).
\end{align*}
Hence $d^2(z_i,z_j) \le (12\tau) \cdot \ve$,
thereby $\{z_i\}_{i \in \N}$ is a Cauchy sequence and convergent.
The limit point belongs to $\cJ^f_{\tau}(x)$ thanks to the lower semi-continuity of $f$.

(2)
This is a standard fact, we give an outline of the proof for completeness.
The point is that $\varphi :=f+d^2(x,\cdot)/(2\tau)$
is $(\lambda+\tau^{-1})$-convex,
and $\lambda+\tau^{-1}>0$ by the hypothesis.
The lower semi-continuity of $f$ implies that
$\inf_{B(x,r)} \varphi >\varphi(x)-1 >-\infty$ for sufficiently small $r>0$.
Combining this with the $(\lambda+\tau^{-1})$-convexity,
we find that $\inf_X \varphi >-\infty$.
Then any sequence $\{z_i\}_{i \in \N} \subset X$
such that $\lim_{i \to \infty} \varphi(z_i) =\inf_X \varphi$
is a Cauchy sequence by a similar argument to (1).
Therefore $\cJ^f_{\tau}(x) \neq \emptyset$,
and the uniqueness of a minimizer of $\varphi$
also follows from the $(\lambda+\tau^{-1})$-convexity.
$\qedd$
\end{proof}

We remark that $\cJ^f_{\tau}(x)$ can be empty for a general quasi-convex function $f$.
Moreover, even when $\inf_X f>-\infty$, the set $\cJ^f_{\tau}(x)$
may not be convex.

\begin{example}\label{ex:CATqc}
We consider two simple $1$-dimensional examples.
\begin{enumerate}[(a)]
\item
First, let $f:\R \lra \R$ be $f(x)=-x^3$.
This is quasi-convex (recall Example~\ref{ex:ESC}(b)), however,
$\cJ^f_{\tau}(0)=\emptyset$ since
\[ f_{\tau}(0) =\lim_{z \to \infty} \bigg\{ {-}z^3 +\frac{z^2}{2\tau} \bigg\} =-\infty. \]

\item
We next consider the same function $f(x)=-x^3$ but on $[0,1]$.
Then
\[ \cJ^f_{1/2}(0) =\underset{z \in [0,1]}{\mathrm{argmin}}\, \{-z^3+z^2\} =\{0,1\}. \]
\end{enumerate}
\end{example}

\begin{proposition}[Gradient curves are self-contracted]\label{pr:CATsc}
Let $f:X \lra \R$ be a lower semi-continuous quasi-convex function
on a complete $\CAT(0)$-space $(X,d)$,
and suppose that a discrete-time gradient curve $(x_{\bm{\tau}}^k)_{k \ge 0}$
as in \eqref{eq:disc} exists.
\begin{enumerate}[{\rm (i)}]
\item
The discrete curve $(x_{\bm{\tau}}^k)_{k \ge 0}$ is self-contracted in the sense that
$d(x_{\bm{\tau}}^l,x_{\bm{\tau}}^m) \le d(x_{\bm{\tau}}^k,x_{\bm{\tau}}^m)$
for all $0 \le k<l<m$.
\item
Moreover, the continuous extension
\[ \xi(t):=\gamma_k \bigg( \frac{t-\tau_{k-1}}{\tau_k -\tau_{k-1}} \bigg)
 \quad \text{for}\ t \in [\tau_{k-1},\tau_k),\, k \in \N, \]
where $\gamma_k:=\gamma_{x_{\bm{\tau}}^{k-1} x_{\bm{\tau}}^k}$ and $\tau_0:=0$,
is self-contracted.
\end{enumerate}
\end{proposition}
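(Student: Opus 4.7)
The plan is to extract from the minimality of each $x_{\bm{\tau}}^l$ a single Pythagorean-type estimate and derive both assertions from it. First, taking the competitor $z=x_{\bm{\tau}}^{k-1}$ in the definition of $\cJ^f_{\tau_k}(x_{\bm{\tau}}^{k-1})$ yields $f(x_{\bm{\tau}}^k)\le f(x_{\bm{\tau}}^{k-1})$, so $f$ is non-increasing along the discrete curve. For any $y\in X$ with $f(y)\le f(x_{\bm{\tau}}^l)$ I next insert $z=\gamma_{x_{\bm{\tau}}^l y}(s)$ into the minimality inequality; quasi-convexity forces $f(z)\le f(x_{\bm{\tau}}^l)$, the $f$-terms cancel, and I get $d^2(x_{\bm{\tau}}^{l-1},x_{\bm{\tau}}^l)\le d^2(x_{\bm{\tau}}^{l-1},\gamma_{x_{\bm{\tau}}^l y}(s))$. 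Bounding the right-hand side via the $\CAT(0)$ inequality \eqref{eq:CAT}, dividing by $s>0$ and letting $s\downarrow 0$, I obtain the key Pythagorean-type bound
\[
 d^2(x_{\bm{\tau}}^{l-1},x_{\bm{\tau}}^l)+d^2(x_{\bm{\tau}}^l,y)\le d^2(x_{\bm{\tau}}^{l-1},y).
\]
Specialising to $y=x_{\bm{\tau}}^m$ for $l\le m$ produces the one-step contraction $d(x_{\bm{\tau}}^l,x_{\bm{\tau}}^m)\le d(x_{\bm{\tau}}^{l-1},x_{\bm{\tau}}^m)$, and iterating from $l$ down to $k+1$ establishes (i).

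For (ii) it suffices, for each $t_3$, to show that $t\mapsto d(\xi(t),\xi(t_3))$ is non-increasing on every geodesic piece contained in $[0,t_3]$, continuity of $\xi$ at the nodes $\tau_k$ doing the rest. If $t_3\in[\tau_{m-1},\tau_m)$, then on the terminal piece both $\xi(t)$ and $\xi(t_3)$ lie on $\gamma_m$ and the distance is affine in $t$. On an earlier piece $[\tau_{k-1},\tau_k]$ with $k<m$, set $y:=\xi(t_3)$ and $\phi(s):=d^2(\gamma_k(s),y)$; since $y\in\gamma_m$, quasi-convexity combined with (i) gives $f(y)\le f(x_{\bm{\tau}}^{m-1})\le f(x_{\bm{\tau}}^k)$, so the Pythagorean bound applies with $l=k$. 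Read backward, it says that $s\mapsto d^2(x_{\bm{\tau}}^{k-1},\gamma_{x_{\bm{\tau}}^k y}(s))$ attains its minimum at $s=0$; the first variation formula (Theorem~\ref{th:1vf}) then forces $\cos\angle[x_{\bm{\tau}}^{k-1}x_{\bm{\tau}}^k y]\le 0$.

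The within-piece monotonicity of $\phi$ for $k<m$ is the main obstacle, because, unlike in the Euclidean setting, one has no direct control on the angle at the trailing endpoint $x_{\bm{\tau}}^{k-1}$ and therefore cannot bound $\phi'_-(0)$ naively. My plan is to propagate the sign information from $s=1$ backward using convexity. Applying Theorem~\ref{th:1vf} once more at $x_{\bm{\tau}}^k$ along the reversed geodesic $\gamma_{x_{\bm{\tau}}^k x_{\bm{\tau}}^{k-1}}$ identifies the left-derivative
\[
 \phi'_-(1)=2\,d(x_{\bm{\tau}}^{k-1},x_{\bm{\tau}}^k)\,d(x_{\bm{\tau}}^k,y)\cos\angle[x_{\bm{\tau}}^{k-1}x_{\bm{\tau}}^k y]\le 0,
\]
while the $\CAT(0)$ inequality \eqref{eq:CAT} applied to $d^2(\cdot,y)$ makes $\phi$ convex on $[0,1]$. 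Convexity forces its left-derivative to be non-decreasing, so $\phi'_-(s)\le\phi'_-(1)\le 0$ for every $s\in[0,1]$; hence $\phi$, and with it $d(\gamma_k(\cdot),y)=\sqrt{\phi}$, is non-increasing on $[0,1]$, which is exactly what (ii) requires.
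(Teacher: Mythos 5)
Your proposal is correct and follows essentially the same route as the paper: part (i) via inserting $\gamma_{x_{\bm{\tau}}^l y}(s)$ into the Moreau--Yosida minimality, cancelling the $f$-terms by quasi-convexity, applying \eqref{eq:CAT} and letting $s\downarrow 0$; and part (ii) via the resulting angle bound $\angle[x_{\bm{\tau}}^{k-1}x_{\bm{\tau}}^k y]\ge\pi/2$ at the foot-point $x_{\bm{\tau}}^k$ of the sub-level set, propagated along $\gamma_k$ by the convexity of $d^2(\cdot,y)$. Your version merely packages the intermediate inequality as a single Pythagorean-type estimate valid for all $y$ in the sub-level set and spells out the derivative propagation that the paper leaves implicit.
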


\begin{proof}
(i)
Fix $k<m-1$ and put $\gamma :=\gamma_{x_{\bm{\tau}}^{k+1} x_{\bm{\tau}}^m}$.
Since $x_{\bm{\tau}}^{k+1} \in \cJ^f_{\tau_{k+1}}(x_{\bm{\tau}}^k)$,
$f(x_{\bm{\tau}}^m) \le f(x_{\bm{\tau}}^{k+1})$ and $f$ is quasi-convex,
we find for all $s \in (0,1)$
\[ f(x_{\bm{\tau}}^{k+1}) +\frac{d^2(x_{\bm{\tau}}^k,x_{\bm{\tau}}^{k+1})}{2\tau_{k+1}}
 \le f\big( \gamma(s) \big) +\frac{d^2(x_{\bm{\tau}}^k,\gamma(s))}{2\tau_{k+1}}
 \le f(x_{\bm{\tau}}^{k+1}) +\frac{d^2(x_{\bm{\tau}}^k,\gamma(s))}{2\tau_{k+1}}. \]
Therefore $d(x_{\bm{\tau}}^k,x_{\bm{\tau}}^{k+1}) \le d(x_{\bm{\tau}}^k,\gamma(s))$,
while the $\CAT(0)$-property \eqref{eq:CAT} shows
\[ d^2\big( x_{\bm{\tau}}^k,\gamma(s) \big)
 \le (1-s)d^2(x_{\bm{\tau}}^k,x_{\bm{\tau}}^{k+1}) +sd^2(x_{\bm{\tau}}^k,x_{\bm{\tau}}^m)
 -(1-s)sd^2(x_{\bm{\tau}}^{k+1},x_{\bm{\tau}}^m). \]
Combining these inequalities and dividing by $s$ yields
\[ d^2(x_{\bm{\tau}}^k,x_{\bm{\tau}}^{k+1}) \le d^2(x_{\bm{\tau}}^k,x_{\bm{\tau}}^m)
 -(1-s) d^2(x_{\bm{\tau}}^{k+1},x_{\bm{\tau}}^m). \]
Letting $s \to 0$ implies
$d(x_{\bm{\tau}}^{k+1},x_{\bm{\tau}}^m) \le d(x_{\bm{\tau}}^k,x_{\bm{\tau}}^m)$,
which completes the proof of (i).

(ii)
Given $k \in \N$, notice that $x_{\bm{\tau}}^k$ attains
the minimum of $d(x_{\bm{\tau}}^{k-1},\cdot)$ in the sub-level set
$Z_k:=\{ x \in X \,|\, f(x) \le f(x_{\bm{\tau}}^k) \}$ (cf.\ \cite[Lemma~4.15]{D3L}).
In other words, $x_{\bm{\tau}}^k$ is the \emph{foot-point} (or the \emph{projection})
of $x_{\bm{\tau}}^{k-1}$ to the convex set $Z_k$.
Then, for any $z \in Z_k$, we deduce from
$d(x_{\bm{\tau}}^{k-1},\gamma_{x_{\bm{\tau}}^k z}(s)) \ge d(x_{\bm{\tau}}^{k-1},x_{\bm{\tau}}^k)$
for all $s$ that
\begin{equation}\label{eq:CATsc}
\angle_{x_{\bm{\tau}}^k}(\gamma_{x_{\bm{\tau}}^k x_{\bm{\tau}}^{k-1}},
 \gamma_{x_{\bm{\tau}}^k z}) \ge \frac{\pi}{2}.
\end{equation}
This together with \eqref{eq:CAT}
implies that $d(\gamma_{x_{\bm{\tau}}^k x_{\bm{\tau}}^{k-1}}(\rho),z)$ is non-decreasing in $\rho$.
Since $\xi(t) \in Z_k$ for all $t \ge \sum_{i=1}^k \tau_i$,
this completes the proof.
$\qedd$
\end{proof}

Let us remark that (ii) in the above proposition was not straightforward from (i),
the estimate \eqref{eq:CATsc} is essential.
In fact, a polygonal curve constructed from a given discrete self-contracted curve
may not be self-contracted.

\begin{remark}[$\CAT(1)$-spaces]\label{rm:CAT}
Along the same lines as \cite{OP2},
one can generalize Proposition~\ref{pr:CATsc} to \emph{$\CAT(1)$-spaces}
with diameter less than $\pi/2$.
In this case, instead of \eqref{eq:CAT},
we have a uniform convexity of the squared distance:
\[ d^2\big( x,\gamma(s) \big) \le (1-s)d^2(x,y) +sd^2(x,z) -c(1-s)sd^2(y,z) \]
for some $c \in (0,1)$ depending on the diameter.
Combining $d(x_{\bm{\tau}}^k,x_{\bm{\tau}}^{k+1}) \le d(x_{\bm{\tau}}^k,\gamma(s))$
in the proof of (i) with the first variation formula (Theorem~\ref{th:1vf})
along $\gamma =\gamma_{x_{\bm{\tau}}^{k+1} x_{\bm{\tau}}^m}$
as well as along $\eta:=\gamma_{x_{\bm{\tau}}^{k+1} x_{\bm{\tau}}^k}$,
we arrive at the modified estimate
\begin{align*}
0 &\le \lim_{s \to 0}
 \frac{d^2(x_{\bm{\tau}}^k,\gamma(s)) -d^2(x_{\bm{\tau}}^k,x_{\bm{\tau}}^{k+1})}{2s}
 =-d(x_{\bm{\tau}}^{k+1},x_{\bm{\tau}}^m) d(x_{\bm{\tau}}^{k+1},x_{\bm{\tau}}^k)
 \cos \angle_{x_{\bm{\tau}}^{k+1}}(\gamma,\eta) \\
&= \lim_{\rho \to 0}
 \frac{d^2(\eta(\rho),x_{\bm{\tau}}^m) -d^2(x_{\bm{\tau}}^{k+1},x_{\bm{\tau}}^m)}{2\rho}.
\end{align*}
Finally, substituting into this the inequality
\begin{align*}
d^2 \big( \eta(\rho),x_{\bm{\tau}}^m \big)
&\le (1-\rho) d^2(x_{\bm{\tau}}^{k+1},x_{\bm{\tau}}^m) +\rho d^2(x_{\bm{\tau}}^k,x_{\bm{\tau}}^m)
 -c(1-\rho)\rho d^2(x_{\bm{\tau}}^{k+1},x_{\bm{\tau}}^k) \\
&\le (1-\rho) d^2(x_{\bm{\tau}}^{k+1},x_{\bm{\tau}}^m) +\rho d^2(x_{\bm{\tau}}^k,x_{\bm{\tau}}^m)
\end{align*}
yields $d(x_{\bm{\tau}}^{k+1},x_{\bm{\tau}}^m) \le d(x_{\bm{\tau}}^k,x_{\bm{\tau}}^m)$
as desired.
The proof of (ii) is the same as the $\CAT(0)$-case.
\end{remark}

\subsection{Gradient curves of convex functions}\label{ssc:gf'}

We briefly comment on what can be derived under the stronger condition of convexity.
Let $(X,d)$ be a complete $\CAT(0)$-space
and $f:X \lra \R$ be a lower semi-continuous convex function.
Then, as the limit of discrete-time gradient curves as $\sup_k |\tau_k| \to 0$,
we obtain a continuous \emph{gradient curve} $\xi:[0,\ell) \lra X$.
One of the most important properties of $\xi$ is the 
\emph{evolution variational inequality}:
\begin{equation}\label{eq:EVI}
\limsup_{\ve \downarrow 0} \frac{d^2(\xi(t+\ve),y) -d^2(\xi(t),y)}{2\ve}
 +f\big( \xi(t) \big) \le f(y) \qquad \text{for all}\ y \in X.
\end{equation}
Then the self-contractedness of $\xi$ immediately follows,
by applying \eqref{eq:EVI} to $y=\xi(T)$ and noticing that $f(\xi(T)) \le f(\xi(t))$ for all $t<T$.

The evolution variational inequality \eqref{eq:EVI}
also implies the \emph{contraction property}:
\begin{equation}\label{eq:cont}
d\big( \xi(t'),\zeta(t') \big) \le d\big( \xi(t),\zeta(t) \big)
\qquad \text{for all}\ t'>t
\end{equation}
along any gradient curves $\xi,\zeta$ of $f$.
This is a useful property when one is looking for a minimizer of $f$
by starting from a random point and tracing a gradient curve.
The self-contractedness would be thought of as a counterpart
to the contraction property \eqref{eq:cont} for a single curve
(though the author could not find any direct connection between these contraction properties).
For a $C^1$-function $f$ on a Euclidean space or a Riemannian manifold,
the contraction property \eqref{eq:cont} is enjoyed by all pairs of gradient curves of $f$
if and only if $f$ is convex.

\begin{remark}[Normed spaces]\label{rm:norm}
It is known by \cite{OSnc} that gradient curves of convex functions
on normed spaces (or Finsler manifolds) do not necessarily satisfy the contraction property.
This is because the first variation formula based on the angle is a genuinely Riemannian property.
In fact, a normed space is being a $\CAT(0)$-space if and only if it is an inner product space.
\end{remark}

\section{Rectifiability in $\CAT(0)$-spaces}\label{sc:CAT+}

In this section, we reconsider the discussion in Section~\ref{sc:Riem}
on Hadamard manifolds step by step, and introduce the sufficient conditions
for the rectifiability of self-contracted curves in $\CAT(0)$-spaces.
In the next section we consider some examples to those our argument applies.

\subsection{Spaces of directions and tangent cones}\label{ssc:cone}

In order to state our conditions for the rectifiability,
we recall two basic notions describing the infinitesimal structures of $\CAT(0)$-spaces
(we refer to \cite{BBI} for more details).
Let $(X,d)$ be a $\CAT(0)$-space and fix $x \in X$.
The set of nonconstant geodesics emanating from $x$ is denoted by $\widetilde{\Sigma}_xX$:
\[ \widetilde{\Sigma}_xX :=\{ \gamma_{xy} \,|\, y \in X \setminus \{x\} \}. \]
Then the angle $\angle_x$ provides a pseudo-distance function of $\widetilde{\Sigma}_xX$.
Define the \emph{space of directions} $\Sigma_xX$ at $x$ as
the completion of the quotient $\widetilde{\Sigma}_xX/\{ \angle_x =0 \}$ with respect to $\angle_x$.
Then $(\Sigma_xX,\angle_x)$ becomes a metric space.
We also define the \emph{tangent cone} $(C_xX,d_x)$ at $x$ as the Euclidean cone over $\Sigma_xX$,
that is to say,
\[ C_xX :=\big( \Sigma_xX \times [0,\infty) \big) \big/ \big( \Sigma_xX \times \{0\} \big), \]
and
\begin{equation}\label{eq:d_x}
d_x\big( (\gamma,s),(\eta,t) \big) :=\sqrt{s^2+t^2 -2st\cos\angle_x(\gamma,\eta)}
\end{equation}
for $(\gamma,s),(\eta,t) \in C_xX$ ($d_x((\gamma,0),(\eta,t))=t$).
The origin of $C_xX$ will be denoted by $o_x:=[(\gamma,0)]$.
We will sometimes identify $\gamma \in \Sigma_xX$ with $(\gamma,1) \in C_xX$.

\subsection{Conditions for rectifiability}\label{ssc:Crect}

Let $(X,d)$ be a complete $\CAT(0)$-space.
First of all, the angle estimate (Lemma~\ref{lm:Rkey}) is readily generalized.

\begin{lemma}[Angle estimate]\label{lm:angle}
Let $\xi:[0,\ell) \lra X$ be a self-contracted curve.
Then, for each $\tau \in [0,\ell)$ and all $t_1,t_2 \in (\tau,\ell)$ with $\xi(t_1),\xi(t_2) \neq \xi(\tau)$,
we have
\[ \angle \big[ \xi(t_1) \xi(\tau) \xi(t_2) \big] <\frac{\pi}{2}. \]
\end{lemma}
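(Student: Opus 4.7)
The plan is to repeat the proof of Lemma 3.1 verbatim, since every ingredient used there (the triangle comparison inequality \eqref{eq:Hada} and the first variation formula) has a clean $\CAT(0)$ analogue already stated in the paper: the defining inequality \eqref{eq:CAT} and Theorem~\ref{th:1vf}.

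First, I would assume $t_1<t_2$ without loss of generality and abbreviate $p:=\xi(\tau)$, $q:=\xi(t_1)$, $r:=\xi(t_2)$, with $q,r \neq p$. Let $\gamma:=\gamma_{pq}:[0,1]\lra X$ be the unique minimal geodesic from $p$ to $q$. Applying the $\CAT(0)$-inequality \eqref{eq:CAT} to the triangle with vertices $p,q,r$, taking $x=r$ and $\gamma$ from $y=p$ to $z=q$, gives
\[
d^2\bigl(r,\gamma(s)\bigr)
\le (1-s)\,d^2(r,p)+s\,d^2(r,q)-(1-s)s\,d^2(p,q)
\]
for all $s\in[0,1]$. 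Since $\xi$ is self-contracted we have $d(q,r)=d(\xi(t_1),\xi(t_2))\le d(\xi(\tau),\xi(t_2))=d(p,r)$, hence
\[
d^2\bigl(r,\gamma(s)\bigr) \le d^2(p,r)-(1-s)s\,d^2(p,q).
\]

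Next, I would apply the first variation formula (Theorem~\ref{th:1vf}) to $\gamma=\gamma_{pq}$ and the point $r$:
\[
-2\,d(p,q)\,d(p,r)\cos\angle[qpr]
=\lim_{s\to 0}\frac{d^2(\gamma(s),r)-d^2(p,r)}{s}
\le \lim_{s\to 0}\bigl(-(1-s)\,d^2(p,q)\bigr) = -d^2(p,q).
\]
Dividing through by $-2\,d(p,q)\,d(p,r)>0$ (both factors are positive because $q,r\neq p$) yields
\[
\cos\angle[qpr] \ge \frac{d(p,q)}{2\,d(p,r)} > 0,
\]
which is strictly positive, and therefore $\angle[\xi(t_1)\xi(\tau)\xi(t_2)]=\angle[qpr]<\pi/2$, as desired.

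There is no real obstacle: both the geometric input (comparison of the squared distance along geodesics) and the analytic input (first variation for the distance) are already packaged for $\CAT(0)$-spaces in Section~4, so the proof is a direct transcription of the Hadamard argument. The only point worth noting is that, unlike in the Riemannian setting, there is no tangent-space inner product to appeal to, but Theorem~\ref{th:1vf} bypasses that by encoding the derivative of the squared distance intrinsically via $\cos\angle[\cdot\cdot\cdot]$, so the conclusion reads off immediately from the inequality above.
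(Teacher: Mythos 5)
Your proof is correct and is exactly the argument the paper intends: the paper's proof of this lemma simply says it follows "in the same manner as Lemma~\ref{lm:Rkey}" via the first variation formula, and your transcription—applying \eqref{eq:CAT} to the triangle $\xi(\tau)\xi(t_1)\xi(t_2)$, using self-contractedness to absorb the $s\,d^2(r,q)$ term, and then reading off $\cos\angle[qpr]\ge d(p,q)/(2d(p,r))>0$ from Theorem~\ref{th:1vf}—is precisely that argument carried out in detail.
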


\begin{proof}
One can show this in the same manner as Lemma~\ref{lm:Rkey},
thanks to the first variation formula (Theorem~\ref{th:1vf}).
$\qedd$
\end{proof}

The next step was based on Lemma~\ref{lm:rad}
built on the doubling condition of the unit sphere.
One can mimic the proof once the following property is assumed.

\begin{condition}[Total boundedness]\label{cd:m}
Given a bounded set $\Omega \subset X$,
there exists a constant $\bm{m} \in \N$ such that, at any $x \in \Omega$,
every subset $\Delta \subset \Sigma_xX$ satisfying
$\angle_x(\gamma,\eta) \ge \pi/3$ for all distinct $\gamma,\eta \in \Delta$
has cardinality at most $\bm{m}$.
\end{condition}

Recall from the proof of Lemma~\ref{lm:rad} that,
when $X=\R^n$ ($\Sigma_xX=\Sph^{n-1}$),
then one can take $\bm{m}=3^n$.

\begin{lemma}[Radii of sets of diameter $\le \pi/2$]\label{lm:mrad}
Suppose that Condition~{\rm \ref{cd:m}} is satisfied.
Then, for any $x \in \Omega$ and subset $\Delta \subset \Sigma_x X$
with $\diam_{\angle_x}(\Delta) \le \pi/2$,
there exists $\bar{\gamma} \in \Sigma_x X$ for which
\[ \angle_x(\bar{\gamma},\eta) \le \arccos [(2\bm{m})^{-1}] \]
holds for all $\eta \in \Delta$.
\end{lemma}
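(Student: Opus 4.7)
The plan is to mimic the Euclidean argument of Lemma~\ref{lm:rad} inside the tangent cone $C_xX$, which is itself a complete $\CAT(0)$-space (the Euclidean cone over the $\CAT(1)$-space $\Sigma_xX$). In place of the Euclidean sum of unit vectors, the role of ``barycenter'' will be played by the Fr\'echet mean of the selected unit vectors in $C_xX$, and $\bar\gamma$ will be the direction from the apex $o_x$ to this mean.

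By Condition~\ref{cd:m}, choose a maximal $\pi/3$-separated subset $\{\gamma_j\}_{j=1}^m \subset \Delta$ with $m \le \bm{m}$. Maximality ensures that for every $\eta \in \Delta$ there is some $i_0=i_0(\eta)$ with $\angle_x(\eta,\gamma_{i_0})<\pi/3$, and combined with $\diam_{\angle_x}(\Delta) \le \pi/2$ this yields
\[ T(\eta) := \sum_{j=1}^m \cos\angle_x(\eta,\gamma_j) > \frac{1}{2}, \]
every summand being non-negative. Identify $\gamma_j$ with the unit vector $w_j:=(\gamma_j,1) \in C_xX$ and define $F(p):=\sum_{j=1}^m d_x(p,w_j)^2$. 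Since each $d_x(\cdot,w_j)^2$ is $2$-convex in the complete $\CAT(0)$-space $C_xX$, $F$ is $2m$-convex and admits a unique minimizer $p^* \in C_xX$.

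To see $p^* \neq o_x$, I would evaluate $F$ along the radial curve $t \mapsto (\gamma_{i_0},t)$: by the cone metric \eqref{eq:d_x}, this is a quadratic in $t$ whose derivative at $0$ equals $-2\sum_j \cos\angle_x(\gamma_{i_0},\gamma_j) \le -2<0$, so $F$ strictly decreases from $o_x$ into the direction $\gamma_{i_0}$. Writing $p^*=(\bar\gamma,r)$ with $r>0$, optimality of $F$ along the ray $\{(\bar\gamma,t)\}_{t \ge 0}$ then forces $r=S/m$, where $S:=\sum_j \cos\angle_x(\bar\gamma,\gamma_j) \in (0,m]$; in particular $r \in (0,1]$ and a direct computation gives $F(p^*) = m-mr^2$, while $F((\eta,1)) = 2m - 2T(\eta)$.

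It remains to combine the strong convexity of $F$ with the $\CAT(0)$ triangle comparison at $o_x$. The $2m$-convexity of $F$ together with $F(p^*) \le F((\eta,1))$ gives
\[ d_x\big((\eta,1),p^*\big)^2 \le \frac{F((\eta,1)) - F(p^*)}{m} = 1 + r^2 - \frac{2T(\eta)}{m}, \]
and applying $\angle \le \widetilde{\angle}$ at $o_x$ to the triangle $o_x,\, p^*,\, (\eta,1)$ in $C_xX$ (noting that the angle at $o_x$ between the two radial geodesics equals $\angle_x(\bar\gamma,\eta)$) yields
\[ \cos\angle_x(\bar\gamma,\eta) \ge \frac{r^2+1-d_x(p^*,(\eta,1))^2}{2r} \ge \frac{T(\eta)}{mr} > \frac{1}{2mr} \ge \frac{1}{2m} \ge \frac{1}{2\bm{m}}, \]
which is the desired estimate. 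The delicate point is that $C_xX$ carries no linear structure, so the Euclidean sum $\sum_j w_j / \|\sum_j w_j\|$ must be replaced by the Fr\'echet barycenter; the radial optimality identity $r=S/m$ and the $2m$-strong convexity bound on $d_x((\eta,1),p^*)$ are precisely what make the final estimate collapse to the clean constant $(2\bm{m})^{-1}$.
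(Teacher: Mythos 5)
Your proposal is correct and follows essentially the same route as the paper: a maximal $\pi/3$-separated subset of size $\le\bm{m}$, the Fr\'echet barycenter of the corresponding unit vectors in $C_xX$, the variance inequality (your $2m$-strong convexity bound is exactly \eqref{eq:var}), and radial optimality to pin down the radius $r=s_v\le 1$ of the barycenter. The only cosmetic difference is that you invoke $\angle\le\widetilde{\angle}$ at the apex where the cone metric \eqref{eq:d_x} already gives the law of cosines with equality; your algebra ($F(p^*)=m-mr^2$, $F((\eta,1))=2m-2T(\eta)$, $T(\eta)>1/2$) is a repackaging of the paper's computation and yields the same constant.
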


\begin{proof}
Choose a maximal set $\{\gamma_i\}_{i=1}^k \subset \Delta$
such that $\angle_x(\gamma_i,\gamma_j) \ge \pi/3$ for $i \neq j$.
By Condition~\ref{cd:m} we have $k \le \bm{m}<\infty$.
Let us identify $\gamma \in \Sigma_xX$ with $(\gamma,1) \in C_xX$ in the sequel.
We take the \emph{barycenter} $v \in C_xX$ of the uniform distribution on $\{\gamma_i\}_{i=1}^k$
in the sense that $v$ attains the minimum of the function
\[ C_xX \ni w \ \longmapsto\ \sum_{i=1}^k d_x^2(w,\gamma_i). \]
Such a minimizer uniquely exists and enjoys the \emph{variance inequality}
\begin{equation}\label{eq:var}
\frac{1}{k} \sum_{i=1}^k d_x^2(w,\gamma_i)
 \ge d_x^2(w,v) +\frac{1}{k} \sum_{i=1}^k d_x^2(v,\gamma_i)
\end{equation}
for all $w \in C_xX$ (we refer to \cite{St} for details).
By construction, putting $v=(\gamma_v,s_v)$,
\[ \sum_{i=1}^k d_x^2 \big( (\gamma_v,t),\gamma_i \big)
 =\sum_{i=1}^k \{ t^2+1-2t\cos\angle_x(\gamma_v,\gamma_i) \} \]
attains the minimum at $t=s_v$
(we take any $\gamma_v \in \Sigma_xX$ if $s_v=0$,
though we eventually see that $s_v>0$).
Hence
\[ \sum_{i=1}^k \{ 2s_v -2\cos\angle_x(\gamma_v,\gamma_i) \}=0, \]
which implies
\begin{align}
d_x^2(o_x,v) +\frac{1}{k} \sum_{i=1}^k d_x^2(v,\gamma_i)
&= s_v^2 +\frac{1}{k} \sum_{i=1}^k \{ s_v^2+1-2s_v \cos\angle_x(\gamma_v,\gamma_i) \} \nonumber\\
&= s_v^2 +(1-s_v^2) =1. \label{eq:ov}
\end{align}

Given $\eta \in \Delta$, choose $i_0$ such that $\angle_x(\eta,\gamma_{i_0})<\pi/3$.
Then we deduce from \eqref{eq:var} and \eqref{eq:ov} that
\begin{align*}
d_x^2(o_x,\eta) +d_x^2(o_x,v) -d_x^2(\eta,v)
&\ge 1+s_v^2+\frac{1}{k} \sum_{i=1}^k \{ d_x^2(v,\gamma_i) -d_x^2(\eta,\gamma_i) \} \\
&= 2-\frac{1}{k} \sum_{i=1}^k d_x^2(\eta,\gamma_i) \\
&\ge 2-\frac{1}{k} \{ 2(k-1)+1 \} =\frac{1}{k},
\end{align*}
where we used $d_x^2(\eta,\gamma_i) \le 2$ and $d_x^2(\eta,\gamma_{i_0}) \le 1$
in the latter inequality.
This shows that $v \neq o_x$ ($s_v>0$)
and $2s_v \cos\angle_x(\eta,\gamma_v) \ge k^{-1}$.
Putting $\bar{\gamma}:=\gamma_v$, we have
\[ d_x^2(\eta,\bar{\gamma}) =2-2\cos\angle_x(\eta,\bar{\gamma})
 \le 2-\frac{1}{s_v k} \le 2-\frac{1}{s_v \bm{m}}. \]
Together with
\[ s_v^2 =d_x^2(o_x,v) \le \frac{1}{k} \sum_{i=1}^k d_x^2(o_x,\gamma_i) =1 \]
derived from \eqref{eq:var},
we conclude that $d_x^2(\eta,\bar{\gamma}) \le 2-\bm{m}^{-1}$,
which is equivalent to the claim $\angle_x(\eta,\bar{\gamma}) \le \arccos[(2\bm{m})^{-1}]$.
$\qedd$
\end{proof}

In accordance with the previous sections, we will set
$\bm{\ve} :=(6\bm{m})^{-1}$ in the sequel, thereby
\begin{equation}\label{eq:mrad}
\angle_x(\bar{\gamma},\eta) \le \arccos(3\bm{\ve}).
\end{equation}
In order to discuss the following step concerning projections,
we define $\log_x:X \lra C_xX$ by $\log_x(y):=(\gamma_{xy},d(x,y)) \in C_xX$,
and $P_{\gamma}:C_x X \lra \R$ with $\gamma \in \Sigma_xX$ by
\[ P_\gamma(w) :=s \cdot \cos\angle_x (\gamma,\eta)
 \qquad \text{for}\ w=(\eta,s) \neq o_x, \]
and $P_{\gamma}(o_x):=0$.
Finally, let $\Pi_{\gamma}(\Xi) \subset \R$ be
the closed convex hull of $P_{\gamma} \circ \log_x(\Xi)$ for $\Xi \subset X$.
Then the very same argument as Lemma~\ref{lm:Rcl2} applies.
For $\tau \in [0,\ell)$, let $\bar{\gamma}_{\tau} \in \Sigma_{\xi(\tau)}X$
be given by Condition~{\rm \ref{cd:m}} for
\[ \Delta=\big\{ \gamma_{\xi(\tau)\xi(t)}
 \,\big|\, t \in (\tau,\ell),\, \xi(t) \neq \xi(\tau) \big\} \subset \Sigma_{\xi(\tau)}X \]
with the help of Lemma~\ref{lm:angle}.
Then, for $\sigma>0$, we set
\[ \Omega_{\tau,\sigma} :=\bigg\{ x \in X \,\bigg|\,
 0< d\big( \xi(\tau),x \big) <\sigma,\,
 \angle_{\xi(\tau)} (\gamma_{\xi(\tau) x},\bar{\gamma}_{\tau})
 \ge \pi -2\arcsin \bigg( \frac{\bm{\ve}}{2} \bigg) \bigg\}. \]
We also put, for $x \in \Omega_{\tau,\sigma}$,
\[ V_x :=\gamma_{x \xi(\tau)} \in \Sigma_xX, \qquad
 \overline{V}\!_x :=\gamma_{\xi(\tau) x} \in \Sigma_{\xi(\tau)}X. \]
As an analogue to \eqref{eq:PV}, we find for $t>\tau$ with $\xi(t) \neq \xi(\tau)$
\begin{align}
&P_{\overline{V}\!_x}\big( {\log}_{\xi(\tau)} \big( \xi(t) \big) \big) \nonumber\\
&= -P_{\bar{\gamma}_{\tau}} \big( {\log}_{\xi(\tau)}\big( \xi(t) \big) \big)
 +\big\{{\cos} \angle_{\xi(\tau)}(\overline{V}\!_x,\gamma_{\xi(\tau) \xi(t)})
 +\cos \angle_{\xi(\tau)}(\bar{\gamma}_{\tau},\gamma_{\xi(\tau) \xi(t)}) \big\}
 d\big( \xi(\tau),\xi(t) \big) \nonumber\\
&\le -3\bm{\ve} d\big( \xi(\tau),\xi(t) \big)
 +2\cos\bigg( \frac{\angle_{\xi(\tau)}(\overline{V}\!_x,\gamma_{\xi(\tau) \xi(t)})
 +\angle_{\xi(\tau)}(\bar{\gamma}_{\tau},\gamma_{\xi(\tau) \xi(t)})}{2} \bigg) \nonumber\\
&\qquad \times \cos\bigg( \frac{\angle_{\xi(\tau)}(\overline{V}\!_x,\gamma_{\xi(\tau) \xi(t)})
 -\angle_{\xi(\tau)}(\bar{\gamma}_{\tau},\gamma_{\xi(\tau) \xi(t)})}{2} \bigg)
 d\big( \xi(\tau),\xi(t) \big) \nonumber\\
&\le \bigg\{ {-3}\bm{\ve}
 +2\cos \bigg( \frac{\angle_{\xi(\tau)}(\overline{V}\!_x,\bar{\gamma}_{\tau})}{2} \bigg) \bigg\}
 d\big( \xi(\tau),\xi(t) \big) \nonumber\\
&\le \bigg\{ {-3}\bm{\ve}
 +2\cos \bigg( \frac{\pi}{2} -\arcsin\bigg( \frac{\bm{\ve}}{2} \bigg) \bigg) \bigg\}
 d\big( \xi(\tau),\xi(t) \big) \nonumber\\
&= -2\bm{\ve} d\big( \xi(\tau),\xi(t) \big). \label{eq:PV'}
\end{align}
Here the first inequality follows from \eqref{eq:mrad},
the second from the triangle inequality of $\angle_{\xi(\tau)}$,
and the third from the choice of $\Omega_{\tau,\sigma}$.

\begin{lemma}[Directional decrease]\label{lm:CATcl2}
Let $\xi:[0,\ell) \lra X$ be a self-contracted curve whose image $\xi([0,\ell))$ is included in
$\Omega$ fulfilling Condition~{\rm \ref{cd:m}}.
Then we have, for any $0 \le \tau<T<\ell$, $\sigma>0$, $x \in \Omega_{\tau,\sigma}$
and $\gamma \in \Sigma_xX$ with $\angle_x(V_x,\gamma) \le 2\arcsin(\bm{\ve}/2)$,
\begin{equation}\label{eq:CWW}
\big| \Pi_{\gamma} \big( \Xi(T) \big) \big|
 \le \big| \Pi_{\gamma} \big( \Xi(\tau) \big) \big|
 -\frac{\bm{\ve}}{2} d \big( \xi(\tau),\xi(T) \big).
\end{equation}
\end{lemma}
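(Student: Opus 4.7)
The plan is to follow the structure of Lemma~\ref{lm:Rcl2} step by step, substituting the Euclidean tangent space $T_xM$ with the CAT(0) tangent cone $C_xX$ and $\exp_x^{-1}$ with $\log_x$. Assume without loss of generality $\xi(T)\neq\xi(\tau)$, so Lemma~\ref{lm:dcon}(ii) guarantees $\xi(t)\neq\xi(\tau)$ for every $t\in[T,\ell)$. Fix such a $t$ and set $w_1:=\log_x\xi(t)$ and $w_2:=\log_x\xi(\tau)=(V_x,d(x,\xi(\tau)))$. Since $\Xi(T)\subset\Xi(\tau)$ and $P_\gamma(w_2)\in\Pi_\gamma(\Xi(\tau))$, it suffices to prove
\[ P_\gamma(w_1)-P_\gamma(w_2)\;\geq\;\tfrac{\bm{\ve}}{2}\,d\bigl(\xi(\tau),\xi(T)\bigr). \]

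For the base case $\gamma=V_x$, I would apply the CAT(0) comparison $\angle\leq\widetilde{\angle}$ at both $x$ and $\xi(\tau)$ and combine with the Euclidean law of cosines on the comparison triangle of $\{x,\xi(\tau),\xi(t)\}$, reproducing \eqref{eq:VV}:
\[ d(x,\xi(t))\cos\angle[\xi(\tau)x\xi(t)]+d(\xi(\tau),\xi(t))\cos\angle[x\xi(\tau)\xi(t)]\;\geq\;d(x,\xi(\tau)). \]
The left-hand side is $P_{V_x}(w_1)+P_{\overline{V}\!_x}(\log_{\xi(\tau)}\xi(t))$, so \eqref{eq:PV'} together with $P_{V_x}(w_2)=d(x,\xi(\tau))$ gives
\[ P_{V_x}(w_1)-P_{V_x}(w_2)\;\geq\;2\bm{\ve}\,d\bigl(\xi(\tau),\xi(t)\bigr). \]

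To propagate to $\gamma$ with $\angle_x(V_x,\gamma)\leq 2\arcsin(\bm{\ve}/2)$, I would replace the Euclidean Cauchy--Schwarz step with a four-point argument inside $C_xX$ (which is itself CAT(0)). The cone identity $P_\eta(w)=\tfrac12\bigl(d_x(o_x,w)^2+1-d_x(w,(\eta,1))^2\bigr)$ shows that the difference of differences $[P_\gamma(w_1)-P_\gamma(w_2)]-[P_{V_x}(w_1)-P_{V_x}(w_2)]$ depends only on the four pairwise distances $d_x(w_i,(\gamma,1))$ and $d_x(w_i,(V_x,1))$. Apply Theorem~\ref{th:subem} to the four points $\{(\gamma,1),(V_x,1),w_1,w_2\}$ in the cyclic order that makes $F:=d_x((\gamma,1),(V_x,1))$ and $E:=d_x(w_1,w_2)$ two opposite sides of the quadrilateral; then $F$ and $E$ are preserved exactly in the planar sub-embedding while the other two distances, now appearing as diagonals, can only grow. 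A direct computation shows that the planar counterpart of the difference of differences equals $\langle \tilde p_\gamma-\tilde p_{V_x},\tilde w_1-\tilde w_2\rangle$, and since the (enlarged) diagonal distances enter that expression only with a negative sign, the CAT(0) quantity dominates its Euclidean counterpart. Cauchy--Schwarz applied sharply to the preserved sides $E,F$ yields
\[ \bigl[P_\gamma(w_1)-P_\gamma(w_2)\bigr]-\bigl[P_{V_x}(w_1)-P_{V_x}(w_2)\bigr]\;\geq\;-F\cdot E. \]
Combined with the CAT(0) non-expansion $E\leq d(\xi(\tau),\xi(t))$ (an immediate consequence of $\angle\leq\widetilde{\angle}$) and $F=2\sin(\angle_x(\gamma,V_x)/2)\leq\bm{\ve}$, this error is $\geq -\bm{\ve}\,d(\xi(\tau),\xi(t))$.

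Chaining the preceding two steps gives $P_\gamma(w_1)-P_\gamma(w_2)\geq \bm{\ve}\,d(\xi(\tau),\xi(t))$, and inequality~\eqref{eq:tT} (whose proof uses only self-contractedness and the triangle inequality, and hence carries over unchanged) then delivers $d(\xi(\tau),\xi(t))\geq d(\xi(\tau),\xi(T))/2$ for $t\geq T$, completing the proof. The main technical hurdle will be the CAT(0) four-point step: a careless cyclic ordering places $F$ and $E$ on the \emph{diagonals} of the sub-embedded quadrilateral, where Theorem~\ref{th:subem} only provides \emph{lower} bounds, which would make Cauchy--Schwarz run in the wrong direction. Choosing the cyclic order so that $F$ and $E$ appear as opposite sides — thereby being preserved exactly — is what makes the Euclidean Cauchy--Schwarz estimate valid in the CAT(0) setting.
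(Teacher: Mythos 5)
Your proposal is correct and follows essentially the same route as the paper's proof: the base case $\gamma=V_x$ via \eqref{eq:VV} and \eqref{eq:PV'}, the propagation to nearby $\gamma$ by rewriting $P_\gamma$ through cone distances and applying the sub-embedding property to $\{(\gamma,1),(V_x,1),\log_x\xi(t),\log_x\xi(\tau)\}$ with exactly the cyclic order the paper uses (so that the enlarged diagonals enter with the favourable sign), and the conclusion via \eqref{eq:tT}. The remark on why the ordering matters is precisely the implicit point in the paper's choice of $\Gamma,V,\Gamma_t,\Gamma_\tau$.
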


Recall that $\angle_x(V_x,\gamma) \le 2\arcsin(\bm{\ve}/2)$ is equivalent to
$d_x((V_x,1),(\gamma,1)) \le \bm{\ve}$.

\begin{proof}
The proof follows essentially the same lines as Lemma~\ref{lm:Rcl2}.
Assume $\xi(T) \neq \xi(\tau)$ without loss of generality.
For any $t>T$, we have
\[ P_{V_x}\big( {\log}_x \big( \xi(t) \big) \big)
 \ge d\big( x,\xi(\tau) \big) -P_{\overline{V}\!_x}\big( {\log}_{\xi(\tau)} \big( \xi(t) \big) \big)
 \ge d\big( x,\xi(\tau) \big) +2\bm{\ve} d\big( \xi(\tau),\xi(t) \big) \]
by \eqref{eq:VV} and \eqref{eq:PV'}.
For $\gamma \in \Sigma_xX$ with $\angle_x(V_x,\gamma) \le 2\arcsin(\bm{\ve}/2)$,
we observe by recalling \eqref{eq:d_x} that
\begin{align*}
&P_{\gamma}\big( {\log}_x \big( \xi(t) \big) \big)
 -P_{\gamma}\big( {\log}_x \big( \xi(\tau) \big) \big) \\
&= P_{V_x}\big( {\log}_x \big( \xi(t) \big) \big)
 -P_{V_x}\big( {\log}_x \big( \xi(\tau) \big) \big) \\
&\quad +d\big( x,\xi(t) \big)
 \big\{ {\cos}\angle_x(\gamma,\gamma_{x \xi(t)})
 -\cos\angle_x \big( V_x,\gamma_{x \xi(t)} \big) \big\} \\
&\quad -d\big( x,\xi(\tau) \big)
 \big\{ {\cos}\angle_x(\gamma,\gamma_{x \xi(\tau)})
 -\cos\angle_x \big( V_x,\gamma_{x \xi(\tau)} \big) \big\} \\
&= P_{V_x}\big( {\log}_x \big( \xi(t) \big) \big) -d\big( x,\xi(\tau) \big) \\
&\quad -\frac{1}{2}
 \big\{ d_x^2(\gamma,\gamma_{x \xi(t)}) -d_x^2\big( V_x,\gamma_{x \xi(t)} \big) \big\}
 +\frac{1}{2}
 \big\{ d_x^2(\gamma,\gamma_{x \xi(\tau)}) -d_x^2\big( V_x,\gamma_{x \xi(\tau)} \big) \big\}
\end{align*}
(we identified $\gamma$ with $(\gamma,1) \in C_xX$ in the last line for simplicity).
Thanks to the sub-embedding property of $(C_xX,d_x)$
(Theorem~\ref{th:subem}, obtained as the limit of that in $(X,d)$),
one can take $\Gamma,V,\Gamma_t,\Gamma_{\tau} \in \R^2$
(corresponding to $\gamma, V_x, \gamma_{x \xi(t)}, \gamma_{x \xi(\tau)}$, respectively)
such that
\begin{align*}
\|V-\Gamma\| &=d_x(\gamma,V_x), \quad \|\Gamma_t -V\| =d_x(V_x,\gamma_{x \xi(t)}), \quad
 \|\Gamma_{\tau} -\Gamma_t\| =d_x(\gamma_{x \xi(t)},\gamma_{x \xi(\tau)}), \\
\|\Gamma -\Gamma_{\tau}\| &=d_x(\gamma_{x \xi(\tau)},\gamma), \quad
 \|\Gamma_t -\Gamma\| \ge d_x(\gamma,\gamma_{x \xi(t)}) ,\quad
 \|\Gamma_{\tau} -V\| \ge d_x(V_x,\gamma_{x \xi(\tau)}).
\end{align*}
Observe that
\begin{align*}
\|\Gamma_t -\Gamma\|^2 -\|\Gamma_t -V\|^2
 -\|\Gamma_{\tau} -\Gamma\|^2 +\|\Gamma_{\tau} -V\|^2
&= 2\langle V-\Gamma, \Gamma_t -\Gamma_{\tau} \rangle \\
&\le 2 \| V-\Gamma \| \cdot \| \Gamma_t -\Gamma_{\tau} \|.
\end{align*}
Thus we obtain,
together with $d_x(\gamma_{x \xi(\tau)},\gamma_{x \xi(t)}) \le d(\xi(\tau),\xi(t))$,
\begin{align*}
&d_x^2(\gamma,\gamma_{x \xi(t)}) -d_x^2\big( V_x,\gamma_{x \xi(t)} \big)
 -d_x^2(\gamma,\gamma_{x \xi(\tau)}) +d_x^2\big( V_x,\gamma_{x \xi(\tau)} \big) \\
&\le 2d_x (V_x,\gamma) d_x(\gamma_{x \xi(\tau)},\gamma_{x \xi(t)})
 \le 2\bm{\ve} d\big( \xi(\tau),\xi(t) \big).
\end{align*}
Therefore we conclude, by \eqref{eq:tT},
\[ P_{\gamma}\big( {\log}_x \big( \xi(t) \big) \big)
 -P_{\gamma}\big( {\log}_x \big( \xi(\tau) \big) \big)
 \ge (2\bm{\ve}-\bm{\ve}) d\big( \xi(\tau),\xi(t) \big)
 \ge \frac{\bm{\ve}}{2} d\big( \xi(\tau),\xi(T) \big). \]
This completes the proof.
$\qedd$
\end{proof}

In the final integration part, we do not in general have estimates corresponding to
$a_n$ and $b_n$ in the case of Hadamard manifolds.
Therefore we assume the following conditions,
regarded as the finite-dimensionality and the bounded complexity.
They are mild conditions and satisfied by a rich class of spaces, see the next section.

\begin{condition}[Area ratio]\label{cd:A}
There exists a constant $\bm{a}=\bm{a}(X,\Omega,\bm{\ve})>0$ such that,
for all $x \in \Omega$, we have a measure $\bA_x$ on $\Sigma_xX$ satisfying
\[ \frac{\bA_x(B_{\angle_x}(\gamma,2\arcsin(\bm{\ve}/2)))}
 {\bA_x(\Sigma_xX)} \ge \bm{a} \quad \text{for all}\ \gamma \in \Sigma_xX. \]
\end{condition}

\begin{condition}[Volume ratio]\label{cd:V}
There exists a constant $\bm{b}=\bm{b}(X,\Omega,\bm{\ve},\sigma)>0$
and a measure $\bV$ on $X$ such that
\[ \frac{1}{\bV(\Omega)}
 \bV \bigg( \bigg\{ x \in X \,\bigg|\,
 0<d(z,x)<\sigma,\,
 \angle_z(\gamma_{zx},\gamma) \ge \pi -2\arcsin \bigg( \frac{\bm{\ve}}{2} \bigg) \bigg\} \bigg)
 \ge \bm{b} \]
for all $z \in \Omega$ and $\gamma \in \Sigma_zX$.
\end{condition}

Notice that there is no definite bound in general.
For instance, for trees, $\bm{a}$ is given by the reciprocal of the maximum degree
(see \S \ref{ssc:tree} below).
Condition~\ref{cd:V} fails at points in the boundary of $X$ (if it is nonempty).

\begin{theorem}[Rectifiability in $\CAT(0)$-spaces]\label{th:Crect}
Let $(X,d)$ be a complete $\CAT(0)$-space and $\Omega \subset X$ a bounded set.
Suppose that Conditions~{\rm \ref{cd:m}}, {\rm \ref{cd:A}}, {\rm \ref{cd:V}} are satisfied
on $\Omega$ with some $\bm{m}$, $\bm{\ve}=(6\bm{m})^{-1}$, $\bm{a}$, $\sigma$ and $\bm{b}$.
Then any self-contracted curve $\xi:[0,\ell) \lra X$ such that the $\sigma$-neighborhood
of its image is included in $\Omega$ has finite length.
\end{theorem}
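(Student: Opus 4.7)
The plan is to mirror the Hadamard argument of Theorem~\ref{th:Hrect} in the abstract $\CAT(0)$ setting, using the measures $\bA_x$ and $\bV$ provided by Conditions~\ref{cd:A} and \ref{cd:V} in place of Riemannian volume and spherical measures. Since Lemmas~\ref{lm:angle}, \ref{lm:mrad} and \ref{lm:CATcl2} have already done the geometric work, what remains is a pure integration/telescoping argument.

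First I would introduce a mean width functional adapted to $\Omega$. For $\Xi \subset X$ define
\[
\sW(\Xi) := \frac{1}{\bV(\Omega)} \int_{\Omega} \frac{1}{\bA_x(\Sigma_xX)}
 \int_{\Sigma_xX} \big| \Pi_\gamma(\Xi) \big| \,\bA_x(d\gamma) \,\bV(dx).
\]
Since $\Omega$ is bounded and $|\Pi_\gamma(\Xi(0))| \le \diam(\Omega)$ for every $x$ and $\gamma$, we have the a priori bound $\sW(\Xi(0)) \le \diam(\Omega) < \infty$. We also have monotonicity $\sW(\Xi(T)) \le \sW(\Xi(\tau))$ for $\tau<T$ since $\Xi(T) \subset \Xi(\tau)$.

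Next, fix any $0 \le \tau<T<\ell$ with $\xi(T) \neq \xi(\tau)$. For each $x \in \Omega_{\tau,\sigma}$, the tangent direction $V_x \in \Sigma_xX$ is defined, and Lemma~\ref{lm:CATcl2} gives the pointwise estimate \eqref{eq:CWW} for every $\gamma$ in the angular ball $B_{\angle_x}(V_x, 2\arcsin(\bm{\ve}/2)) \subset \Sigma_xX$. By Condition~\ref{cd:A} this ball carries at least a fraction $\bm{a}$ of $\bA_x(\Sigma_xX)$, and by Condition~\ref{cd:V} (applied with $z=\xi(\tau)$ and $\gamma=\bar{\gamma}_{\tau}$) we have $\bV(\Omega_{\tau,\sigma}) \ge \bm{b} \cdot \bV(\Omega)$. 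Integrating \eqref{eq:CWW} first over this angular ball and then over $\Omega_{\tau,\sigma} \subset \Omega$, and using $|\Pi_{\gamma}(\Xi(T))| \le |\Pi_{\gamma}(\Xi(\tau))|$ on the complementary region, yields
\[
\sW\big( \Xi(T) \big) \le \sW\big( \Xi(\tau) \big)
 - \bm{a}\bm{b} \cdot \frac{\bm{\ve}}{2} \cdot d\big( \xi(\tau),\xi(T) \big).
\]

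Finally, telescoping along any partition $0=t_0<t_1< \cdots <t_k<\ell$ by applying the above to each pair $(\tau,T)=(t_{i-1},t_i)$ gives
\[
\bm{a}\bm{b} \cdot \frac{\bm{\ve}}{2} \sum_{i=1}^k d\big( \xi(t_{i-1}),\xi(t_i) \big)
 \le \sW\big( \Xi(0) \big) -\sW\big( \Xi(t_k) \big) \le \diam(\Omega),
\]
and taking the supremum over all partitions bounds $\sL(\xi)$ by $2\diam(\Omega)/(\bm{a}\bm{b}\bm{\ve})$. The main technical point (rather than a real obstacle) is to ensure that the relevant maps $x \mapsto V_x$ and $(x,\gamma) \mapsto |\Pi_\gamma(\Xi(t))|$ are measurable enough for Fubini to apply, which follows from the continuity of the projection $P_\gamma$ in $\gamma$ on the cone and the fact that $\Pi_\gamma$ is defined via a closed convex hull of a uniformly bounded set; everything else is bookkeeping of the constants already isolated in the Euclidean and Riemannian cases.
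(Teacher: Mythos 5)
Your proposal is correct and follows essentially the same route as the paper: the paper's proof defines the identical mean width $\sW$ with the measures from Conditions~\ref{cd:A} and \ref{cd:V}, integrates \eqref{eq:CWW} to get $\sW(\Xi(T)) \le \sW(\Xi(\tau)) - \bm{ab}\,(\bm{\ve}/2)\, d(\xi(\tau),\xi(T))$, and telescopes to obtain $\sL(\xi) \le (2/\bm{ab\ve})\,\sW(\Xi(0)) \le (2/\bm{ab\ve})\diam(\Xi(0))$. The only cosmetic differences are that the paper bounds $\sW(\Xi(0))$ by $\diam(\Xi(0))$ rather than $\diam(\Omega)$ and does not comment on measurability.
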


\begin{proof}
By \eqref{eq:CWW} together with Conditions~\ref{cd:A} and \ref{cd:V},
we can estimate the \emph{mean width}
\[ \sW\big( \Xi(t) \big) :=\frac{1}{\bV(\Omega)}
 \int_{\Omega} \bigg( \frac{1}{\bA_x(\Sigma_xX)} \int_{\Sigma_xX}
 \big| \Pi_{\gamma}\big( \Xi(t) \big) \big| \,\bA_x(d\gamma) \bigg) \bV(dx) \]
as
\[ \sW\big( \Xi(T) \big) \le \sW\big( \Xi(\tau) \big)
 -\bm{ab} \cdot \frac{\bm{\ve}}{2} d\big( \xi(\tau),\xi(T) \big). \]
Therefore we obtain
\begin{equation}\label{eq:Crect}
\sL(\xi) \le \frac{2}{\bm{ab\ve}} \sW\big( \Xi(0) \big)
 \le \frac{2}{\bm{ab\ve}}\diam \!\big( \Xi(0) \big) <\infty.
\end{equation}
$\qedd$
\end{proof}

\begin{remark}\label{rm:cond}
All the conditions are \emph{uniform} bounds on a bounded subset $\Omega$ of $X$.
Although we do not have definite bounds in general,
a compactness argument may verify (some of) these conditions
under appropriate assumptions (such as the \emph{geodesic completeness},
in other words, the infinite extendability of geodesics).
We remark that local structures of $\CAT(0)$-spaces are investigated in \cite{Kl}
as well as in a famous unpublished paper by
Otsu--Tanoue (``The Riemannian structure of Alexandrov spaces with curvature bounded above'')
and Lytchak--Nagano's recent preprints \cite{LN1,LN2}.
\end{remark}

\section{Examples}\label{sc:expl}

This section is devoted to several examples of $\CAT(0)$-spaces
to those we can apply the discussion in the previous section.

\subsection{Trees}\label{ssc:tree}

Let $(X,d)$ be a \emph{tree}, namely a graph without loops.
We assume that the maximum of the \emph{degree} (the number of edges emanating from a vertex)
is finite, denoted by $\Lambda_X$.
One can in addition suppose that, without loss of generality,
every geodesic $\gamma:[0,1] \lra X$ can be extended to $\R$ by adding edges to $X$.
The lengths of edges may not be uniform and can be infinite.

Notice that, at each vertex $x$, the angle $\angle_x$ on $\Sigma_xX \times \Sigma_xX$
takes only $0$ and $\pi$.
Hence Condition~\ref{cd:m} is trivially satisfied with $\bm{m}=1$
and \eqref{eq:mrad} holds with $\bm{\ve}=1/3$.
Lemma~\ref{lm:angle} shows that $\Xi(t)=\xi([t,\ell))$ exists only in one side of $\xi(t)$
(the convex hull of $\Xi(t)$ does not include $\xi(t)$ as an interior point).
Thus one can directly obtain
\[ \big| \Pi_{\bar{\gamma}_{\tau}} \big( \Xi(T) \big) \big|
 \le \big| \Pi_{\bar{\gamma}_{\tau}} \big( \Xi(\tau) \big) \big|
 -d\big( \xi(\tau),\xi(T) \big) \]
corresponding to \eqref{eq:CWW},
where $\bar{\gamma}_{\tau} \in \Sigma_{\xi(\tau)}X$ is facing the direction to $\Xi(\tau)$.
Finally, Condition~\ref{cd:A} holds with $\bA_x$ the counting measure
and $\bm{a}=\Lambda_X^{-1}$,
and we have Condition~\ref{cd:V} for the $1$-dimensional Hausdorff measure $\cH^1$
with $\bm{b}=\cH^1(\Omega)^{-1}$ (we put $\sigma=1$).
Therefore we conclude
\[ \sL(\xi) \le 6\Lambda_X \cH^1(\Omega) \diam\!\big( \Xi(0) \big) <\infty \]
by \eqref{eq:Crect}.
We summarize the result of the above discussion as follows.

\begin{proposition}[Rectifiability in trees]\label{pr:tree}
Let $(X,d)$ be a tree with degrees $\le \Lambda_X<\infty$
such that $\cH^1$ is finite on bounded sets.
Then we have, for any bounded self-contracted curve $\xi:[0,\ell) \lra X$,
\[ \sL(\xi) \le 6\Lambda_X \cH^1(\Omega) \diam\!\big( \Xi(0) \big) <\infty,  \]
where $\Omega \subset X$ is the $1$-neighborhood of $\Xi(0)=\xi([0,\ell))$.
\end{proposition}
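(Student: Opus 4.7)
The plan is to verify that a tree $(X,d)$ of bounded degree satisfies Conditions~\ref{cd:m}, \ref{cd:A} and \ref{cd:V}, and then invoke Theorem~\ref{th:Crect} directly. The key structural fact I would exploit at the outset is that, in a tree, the space of directions $\Sigma_xX$ at any point consists of at most $\Lambda_X$ points, any two of which are at angle exactly $\pi$ (if the two incident edges are collinear at an interior point of an edge, there are only two such directions and they are antipodal; at a vertex one has up to $\Lambda_X$ directions pairwise at angle $\pi$ in the metric sense of $\angle_x$, but no two directions ever form an acute or right angle). Consequently, for any subset $\Delta\subset\Sigma_xX$ with pairwise angles $\geq \pi/3$, the cardinality of $\Delta$ is bounded by $\Lambda_X$, so Condition~\ref{cd:m} holds with $\bm m=1$ after a slight sharpening (one in fact gets $\bm m=1$ because Lemma~\ref{lm:angle} forces all directions to $\Xi(\tau)$ from $\xi(\tau)$ to lie in a single ``halfspace'' of diameter $\leq\pi/2$, which in a tree means they are a single direction). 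This gives $\bm{\ve}=1/3$.

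Next I would observe the geometric consequence of the angle estimate (Lemma~\ref{lm:angle}): in a tree, any set of directions of diameter $\leq \pi/2$ in $\Sigma_{\xi(\tau)}X$ must be a \emph{singleton} $\{\bar{\gamma}_\tau\}$, so the whole tail $\Xi(\tau)$ sits inside the single edge-component issuing from $\xi(\tau)$ in the direction $\bar{\gamma}_\tau$. This is much stronger than the generic estimate \eqref{eq:CWW} and yields the sharper one-dimensional decrease
\[
\big|\Pi_{\bar{\gamma}_\tau}(\Xi(T))\big|
\leq \big|\Pi_{\bar{\gamma}_\tau}(\Xi(\tau))\big|-d\big(\xi(\tau),\xi(T)\big),
\]
directly from the self-contractedness, without even invoking Lemma~\ref{lm:CATcl2}. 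This step replaces the vertical/horizontal perturbation analysis from the Riemannian case and is essentially free in a tree thanks to the absence of angles strictly between $0$ and $\pi$.

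For the two integration conditions I would equip each $\Sigma_xX$ with the counting measure $\bA_x$; then $\bA_x(\Sigma_xX)\leq\Lambda_X$ and $\bA_x(\{\bar{\gamma}\})\geq 1$ for each direction, giving Condition~\ref{cd:A} with $\bm a=\Lambda_X^{-1}$. Take $\sigma=1$ and $\bV=\cH^1$ on $X$; since the described set in Condition~\ref{cd:V} contains a nontrivial sub-segment of an edge of total length $\geq$ some fixed positive quantity (in fact it \emph{is} essentially a half-edge near $z$ in the direction opposite to $\gamma$), we get $\bm b=\cH^1(\Omega)^{-1}$ after normalization. The hypothesis that $\cH^1$ is finite on bounded sets ensures $\bm b>0$ on the given $\Omega$, which is the $1$-neighborhood of $\Xi(0)$.

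Finally I would plug these constants $\bm{\ve}=1/3$, $\bm a=\Lambda_X^{-1}$, $\bm b=\cH^1(\Omega)^{-1}$ into the estimate \eqref{eq:Crect} of Theorem~\ref{th:Crect} to conclude
\[
\sL(\xi)\leq \frac{2}{\bm{ab\ve}}\diam\!\big(\Xi(0)\big)=6\Lambda_X\,\cH^1(\Omega)\,\diam\!\big(\Xi(0)\big)<\infty.
\]
The main (minor) obstacle is the care needed at vertices, where $\Sigma_xX$ has multiple discrete directions: one must check that the choice of $\bar{\gamma}_\tau$ from Lemma~\ref{lm:mrad} is well-defined and that $\Xi(\tau)$ really lives on the single branch determined by it; the angle estimate of Lemma~\ref{lm:angle}, combined with the fact that any two distinct directions in a tree satisfy $\angle_x=\pi$, makes this immediate.
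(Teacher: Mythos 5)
Your argument is correct and coincides with the paper's own proof: both verify the three conditions with the sharpened observation that any set of directions of diameter $\le\pi/2$ in a tree is a singleton (so $\bm{\ve}=1/3$), take the counting measure with $\bm{a}=\Lambda_X^{-1}$ and $\cH^1$ with $\sigma=1$, $\bm{b}=\cH^1(\Omega)^{-1}$, and conclude via \eqref{eq:Crect} to get $6\Lambda_X\cH^1(\Omega)\diam(\Xi(0))$. The remark that the tail $\Xi(\tau)$ lies on a single branch, yielding the improved one-dimensional decrease in place of \eqref{eq:CWW}, is likewise exactly what the paper records.
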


\begin{remark}[Continuous and discontinuous cases]\label{rm:tree}
We remark that, if $\xi$ is continuous, then Lemma~\ref{lm:dcon}(ii)
shows that the image of $\xi$ is isometric to an interval
and the rectifiability is reduced to the case of the real line $\R$.
For discontinuous self-contracted curves, however,
this is not the case and it is necessary to bound the degree to control the length.
Consider for example the \emph{$k$-spider}
which consists of the $k$-copies of $[0,1]$ bound at $0$,
and let $\xi(t)$ for $t \in [i-1,i)$ be $1$ in the $i$-th leg $[0,1]$.
This discontinuous curve $\xi:[0,k) \lra X$ is self-contracted and $\sL(\xi)=2(k-1)$
(compare this example with Example~\ref{ex:infd}).
\end{remark}

\subsection{Books}\label{ssc:book}

We next consider $2$-dimensional $\CAT(0)$-spaces so-called (open) \emph{books}.
Let
\[ X :=\{ (i,a,b) \,|\, i=1,2,\ldots,k,\, a \in \R,\, b \in [0,\infty) \}/\sim, \]
where $(i,a,b) \sim (j,a',b')$ if $a=a'$ and $b=b'=0$.
Each subset $X_i:=\{i\} \times \R \times [0,\infty)$ is regarded as a sheet,
then $(X,d)$ is a book with $k$ sheets,
bound along the line $L:=[\{i\} \times \R \times \{0\}] \subset X$.
One can alternatively define $X$ as the product
of the $k$-spider (with infinite edge lengths) and $\R$
(see Figure~\ref{fig4}).

\definecolor {gray}{gray}{0.5}
\begin{figure}
\centering\begin{picture}(400,200)

\put(40,80){\line(4,1){300}}
\put(200,120){\line(0,1){80}}
\put(200,120){\line(2,-1){100}}

\qbezier[40](200,120)(210,75)(220,30)
\qbezier[40](200,120)(200,73)(200,26)
\qbezier[40](200,120)(190,75)(180,30)
\qbezier[40](200,120)(175,80)(150,40)

\put(240,165){$X_1$}
\put(310,110){$X_2$}
\put(345,154){$L$}

\put(258,123){$x$}
\put(294,132){$\gamma$}
\put(288,159){$\bar{\gamma}$}
\put(267,182){$\eta$}
\put(259,134){\rule{2pt}{2pt}}

\put(158,165){$(1,0,b)$}
\put(199,167){\rule{2pt}{2pt}}

\put(250,100){$(2,0,b)$}
\put(239,99){\rule{2pt}{2pt}}

\put(140,60){\textcolor{gray}{$X_i$}}

\thicklines
\put(260,135){\vector(4,1){40}}
\put(260,135){\vector(1,4){10}}
\put(260,135){\vector(2,1){35}}

\end{picture}
\caption{Book}\label{fig4}
\end{figure}

In this case, one can again directly verify \eqref{eq:mrad},
with $3\bm{\ve}=\cos(\pi/4)$, by dividing into three cases.
If $x \not\in L$, then $\Sigma_xX$ is isomeric to $\R^2$
and the condition follows from a direct argument.
The same holds if $x \in L$ and $\Delta \subset \Sigma_xX_i$ for some $i$.
Assume finally that $x \in L$ and $\Delta$ is not contained in a single sheet.
Then there is unique $\gamma \in \Sigma_xL$ such that
$\angle_x(\gamma,\eta_1)+\angle_x(\gamma,\eta_2) \le \pi/2$
for all $\eta_1,\eta_2 \in \Delta$.
If $\angle_x(\gamma,\eta) \le \pi/4$ for all $\eta \in \Delta$,
then we can choose $\bar{\gamma}:=\gamma$.
If not, then $\angle_x(\gamma,\eta)>\pi/4$ holds only in a single sheet
$\eta \in \Delta \cap \Sigma_xX_i$.
We put $\theta:=\sup_{\eta \in \Delta \cap \Sigma_xX_i}\angle_x(\gamma,\eta) \in (\pi/4,\pi/2)$
and take $\bar{\gamma} \in \Sigma_xX_i$ with angle $\theta-\pi/4$ from $\gamma$
(see Figure~\ref{fig4} where $i=1$).
Then the claim follows.

Employing the Hausdorff measures $\cH^1$ and $\cH^2$
in Conditions~\ref{cd:A} and \ref{cd:V}, respectively,
we can conclude as follows.

\begin{proposition}[Rectifiability in books]\label{pr:book}
Let $(X,d)$ be a book with $k$-sheets as above.
Then we have, for any bounded self-contracted curve $\xi:[0,\ell) \lra X$,
\[ \sL(\xi) \le Ck \cH^2(\Omega) \diam\!\big( \Xi(0) \big) <\infty,  \]
where $\Omega \subset X$ is the $1$-neighborhood of $\Xi(0)=\xi([0,\ell))$
and $C>1$ is a universal constant.
\end{proposition}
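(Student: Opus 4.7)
The plan is to apply Theorem~\ref{th:Crect} to the $1$-neighborhood $\Omega \subset X$ of $\Xi(0)=\xi([0,\ell))$, equipping each space of directions with the $1$-dimensional Hausdorff measure $\cH^1$, the ambient space $X$ with $\cH^2$, and taking $\sigma = 1$. The strengthened angle estimate \eqref{eq:mrad} with $3\bm{\ve} = \cos(\pi/4)$ has already been sketched in the preceding paragraph by a case analysis on the position of $x$ with respect to the binding line $L$, so it remains only to verify Conditions~\ref{cd:A} and \ref{cd:V} with the correct $k$-dependence of the constants, and then to collect them into \eqref{eq:Crect}.

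For Condition~\ref{cd:A}, I would observe that $(\Sigma_x X, \angle_x)$ is isometric to the unit circle when $x \notin L$, while for $x \in L$ it consists of $k$ half-circles of length $\pi$ (the $\Sigma_x X_i$'s) glued pairwise at the two points $\pm \gamma_L$ representing the directions along $L$. In particular $\cH^1(\Sigma_x X) \le k\pi$ uniformly on $\Omega$, and any metric ball $B_{\angle_x}(\gamma, 2\arcsin(\bm{\ve}/2))$ contains an arc of $\cH^1$-length at least $2\arcsin(\bm{\ve}/2)$, obtained by traveling along a single half-circle issuing from $\gamma$. Hence Condition~\ref{cd:A} holds with $\bm{a} = c_1/k$ for some universal $c_1>0$.

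For Condition~\ref{cd:V}, given $z \in \Omega$ and $\gamma \in \Sigma_z X$, I would argue that one can always find a sheet $X_j$ containing a direction antipodal to $\gamma$: if $z \notin L$, take the unique sheet through $z$; if $z \in L$ with $\gamma = \pm \gamma_L$, take any $X_j$; if $z \in L$ with $\gamma$ strictly inside $\Sigma_z X_i$, take any $X_j$ with $j \neq i$ (a direct tangent-cone computation shows the antipode of such a $\gamma$ sits in each other sheet). Inside that $X_j$, the set appearing in Condition~\ref{cd:V} contains a flat Euclidean sector of radius $1$ and opening angle $4\arcsin(\bm{\ve}/2)$, whose $\cH^2$-measure is a universal constant $c_2>0$. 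Hence Condition~\ref{cd:V} is satisfied with $\bm{b} = c_2/\cH^2(\Omega)$.

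Combining via \eqref{eq:Crect},
\[ \sL(\xi) \le \frac{2}{\bm{a}\bm{b}\bm{\ve}}\diam\!\big(\Xi(0)\big) \le Ck \cH^2(\Omega) \diam\!\big(\Xi(0)\big), \]
with $C$ a universal constant depending only on $c_1, c_2, \bm{\ve}$, as claimed. The main obstacle is the first step (verifying the strengthened \eqref{eq:mrad} uniformly on $\Omega$): at a point $z \in L$ with $\Delta$ spreading over more than one sheet the tangent cone is not Euclidean, and one must exploit the splitting of $X$ by $L$ together with the tilted-direction construction sketched in the text; once that is in hand, Conditions~\ref{cd:A} and \ref{cd:V} reduce to elementary estimates on Euclidean arcs and sectors, with the factor $k$ entering solely through the bound on $\cH^1(\Sigma_x X)$ at points of $L$.
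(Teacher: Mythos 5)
Your proposal is correct and follows essentially the same route as the paper: the strengthened estimate \eqref{eq:mrad} with $3\bm{\ve}=\cos(\pi/4)$ from the preceding case analysis, Conditions~\ref{cd:A} and \ref{cd:V} verified with $\cH^1$, $\cH^2$ and $\sigma=1$ giving $\bm{a}\sim 1/k$ and $\bm{b}\sim 1/\cH^2(\Omega)$, and the conclusion via \eqref{eq:Crect}. The only minor imprecision is in Condition~\ref{cd:V} when the antipodal sector at a point near $L$ crosses the binding line: the sector then lies in the union of two sheets (which is still isometric to a flat half-plane union, hence flat) rather than ``inside that $X_j$'', but the lower bound on its $\cH^2$-measure is unaffected.
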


\begin{proof}
By the above discussion and $\bm{\ve}=(3\sqrt{2})^{-1}$, we have
\[ \big| \Pi_{\gamma} \big( \Xi(T) \big) \big|
 \le \big| \Pi_{\gamma} \big( \Xi(\tau) \big) \big|
 -\frac{1}{6\sqrt{2}} d\big( \xi(\tau),\xi(T) \big) \]
in place of \eqref{eq:CWW}.
Conditions~\ref{cd:A} and \ref{cd:V} (with $\sigma=1$) hold with
\[ \bm{a}=\frac{4}{k\pi} \arcsin\bigg( \frac{1}{6\sqrt{2}} \bigg) >\frac{\sqrt{2}}{3k\pi}, \qquad
 \bm{b}=\frac{1}{\cH^2(\Omega)} \cdot 2\arcsin\bigg( \frac{1}{6\sqrt{2}} \bigg)
 >\frac{1}{3\sqrt{2} \cH^2(\Omega)}. \]
This shows the claim with $C=54\sqrt{2}\pi$ by \eqref{eq:Crect}.
$\qedd$
\end{proof}

\subsection{Simplicial complexes}\label{ssc:simcon}

We finally consider a far general class of simplicial complexes.
Let $X$ be a (connected) simplicial complex such that
each $n$-simplex is bi-Lipschitz to the standard $n$-simplex
(in either $\R^n$ or $\R^{n+1}$),
compatible at the intersection of simplexes.
We equip $X$ with the induced length distance $d$.

\begin{theorem}[Rectifiability in simplicial complexes]\label{th:simcom}
Let $(X,d)$ be a simplicial complex as above and suppose the following.
\begin{enumerate}[$(1)$]
\item $(X,d)$ is \emph{locally finite} in the sense that every bounded subset
contains only finitely many vertexes$;$
\item $(X,d)$ is a complete $\CAT(0)$-space$;$
\item $\diam_{\angle_x}(\Sigma_xX)=\pi$ for all $x \in X$.
\end{enumerate}
Then any bounded self-contracted curve $\xi:[0,\ell) \lra X$ has finite length.
\end{theorem}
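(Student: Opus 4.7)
The plan is to invoke Theorem~\ref{th:Crect} by verifying Conditions~\ref{cd:m}, \ref{cd:A} and \ref{cd:V} on the bounded set $\Omega$ taken to be the $1$-neighborhood of $\Xi(0) = \xi([0,\ell))$. Hypothesis~(1) together with boundedness ensures that $\Omega$ meets only finitely many simplexes; let $X_0 \subset X$ denote the corresponding finite subcomplex. Every quantity that appears in the three conditions depends only on local data in $X_0$, so all resulting estimates will be uniform over $\Omega$.

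First I would verify Condition~\ref{cd:m}. For any $x \in \Omega$, the space of directions $\Sigma_xX$ is a (finite) spherical simplicial complex whose cells are bi-Lipschitz to pieces of standard spherical simplexes, with uniformly bounded total number of cells. The Euclidean packing estimate applied in each cell via the bi-Lipschitz identification controls the cardinality of a $\pi/3$-separated subset in that cell, and summing over cells produces a uniform bound $\bm{m}$ depending only on $X_0$.

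Next I would verify Conditions~\ref{cd:A} and \ref{cd:V} by employing dimension-adapted Hausdorff measures. Namely, let $n_x$ be the maximal dimension of a simplex of $X_0$ containing $x$, take $\bA_x$ to be $\cH^{n_x-1}$ restricted to the union of top-dimensional cells of $\Sigma_xX$, and set $\bV := \sum_{\Delta} \cH^{\dim \Delta}|_\Delta$ summed over the top-dimensional simplexes of $X_0$. Hypothesis~(3) is the key input: since $\diam_{\angle_x}(\Sigma_xX) = \pi$, every direction $\gamma \in \Sigma_xX$ lies in the closure of a top-dimensional cell of $\Sigma_xX$ of dimension $n_x - 1$, so every spherical ball $B_{\angle_x}(\gamma, 2\arcsin(\bm{\ve}/2))$ contains a definite portion of such a cell and thus has positive $\bA_x$-measure. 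A compactness argument over the finitely many combinatorial types of pointed pairs $(\Sigma_xX, \gamma)$ coming from $X_0$ supplies the uniform lower bound $\bm{a}$. An analogous argument yields $\bm{b}$ in Condition~\ref{cd:V}: for $z \in \Omega$ and $\gamma \in \Sigma_zX$, the antipodal direction to $\gamma$ (available by (3)) spans, via geodesic extension, an opposite cone of size $\sigma$ containing a piece of a top-dimensional simplex of uniformly bounded-below $\bV$-measure. Theorem~\ref{th:Crect} then gives $\sL(\xi) < \infty$.

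The main obstacle is Condition~\ref{cd:A}: without hypothesis~(3), one could have $\gamma$ lying on a high-codimension face whose neighborhood in $\Sigma_xX$ has vanishing $\cH^{n_x - 1}$-measure, ruining any uniform lower bound. The diameter-$\pi$ assumption is precisely what prevents such ``dangling'' low-dimensional pieces, while the bi-Lipschitz structure together with the finiteness of combinatorial types in $X_0$ makes all bounds uniform across $\Omega$.
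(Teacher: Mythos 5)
Your overall strategy coincides with the paper's: restrict attention to the finitely many simplexes meeting a neighborhood of the image, verify Conditions~\ref{cd:m}, \ref{cd:A}, \ref{cd:V} by bi-Lipschitz comparison with Euclidean simplexes, and invoke Theorem~\ref{th:Crect}. Your treatment of Condition~\ref{cd:m} is essentially the paper's argument.

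There is, however, a genuine gap in your verification of Condition~\ref{cd:A}. You take $\bA_x=\cH^{n_x-1}$ restricted to the top-dimensional cells of $\Sigma_xX$ and justify this by asserting that hypothesis~(3) forces every $\gamma\in\Sigma_xX$ to lie in the closure of a cell of dimension $n_x-1$. That inference is false: $\diam_{\angle_x}(\Sigma_xX)=\pi$ only says the supremum of angles equals $\pi$; it does not rule out low-dimensional pieces of $\Sigma_xX$ sitting far from every top-dimensional cell. The paper's own example in Remark~\ref{rm:simcom}(a), $X=([0,\infty)\cup\R^2)/\!\sim$ with $0$ glued to the origin, satisfies all three hypotheses, yet at the gluing point $\Sigma_xX$ is a circle together with an isolated direction (pointing into the half-line) at angle $\pi$ from the circle, while $n_x=2$. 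With your measure, every ball $B_{\angle_x}(\gamma,2\arcsin(\bm{\ve}/2))$ centered at that isolated direction has $\cH^{1}$-measure zero, so no positive $\bm{a}$ exists and Condition~\ref{cd:A} fails. The paper avoids this by stratifying in \emph{all} dimensions: it sets $\bA_x:=\sum_{n=1}^N\cH^{n-1}|_{\Sigma_xX_n}$ and $\bV:=\sum_{n=1}^N\cH^{n}|_{X_n}$, where $X_n$ is the set of points carried only by maximal $n$-simplexes, so that directions and points belonging to lower-dimensional maximal cells still receive positive mass. Hypothesis~(3) enters only in Condition~\ref{cd:V}, to guarantee that the ``opposite cone'' is nonempty; it plays no role in Condition~\ref{cd:A}. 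Your $\bV$ is salvageable if ``top-dimensional simplexes'' is read as maximal (non-face) simplexes of varying dimension, but your $\bA_x$ must be replaced by the stratified sum for the argument to go through.
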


\begin{proof}
It suffices to verify the three conditions in the previous section.
By virtue of the hypothesis (1), we can assume that the maximal dimension
of simplexes is finite, that will be denoted by $N \in \N$.
Then we introduce the stratification:
\begin{align*}
X_N &:= \bigcup N\text{-simplexes}, \\
X_{N-1} &:= \bigg( \bigcup (N-1)\text{-simplexes} \bigg) \setminus X_N,\ \ldots, \\
X_1 &:= \bigg(  \bigcup 1\text{-simplexes} \bigg) \setminus \bigcup_{n=2}^N X_n.
\end{align*}
Since $(X,d)$ is connected (by the definition of $\CAT(0)$-spaces),
there is no isolated vertex and hence we have
\[ X =X_1 \sqcup X_2 \sqcup \cdots \sqcup X_N. \]
Define
\[ \bV :=\sum_{n=1}^N \cH^n|_{X_n}, \qquad
 \bA_x :=\sum_{n=1}^N \cH^{n-1}|_{\Sigma_x X_n} \quad \text{for}\ x \in X. \]
In addition, let $\delta>0$ be the minimum of the lengths of $1$-simplexes.

Then Conditions~\ref{cd:m} and \ref{cd:A} are verified by dividing
the sets in the claims into strata $\Sigma_xX_n$ and applying the Euclidean estimates,
together with the (local) finiteness of vertexes.
Precisely, given $x \in X_n$,
the diagonal argument on a dense set of $\Sigma_xX_n$
provides a bi-Lipschitz embedding $\Phi:C_xX_n \lra \R^n$
such that $\Phi((\gamma,s))=s \cdot \Phi((\gamma,1))$ for $s \ge 0$.
Let $\rho \ge 1$ be the bi-Lipschitz constant of $\Phi$.
Then, for $\gamma,\eta \in \Sigma_xX_n$ with $\angle_x(\gamma,\eta) \ge \pi/3$,
we have $\|\Phi(\eta)-\Phi(\gamma)\| \ge \rho^{-1}$ and $\|\Phi(\gamma)\| \in [\rho^{-1},\rho]$
(by identifying $\gamma \in \Sigma_xX_n$ with $(\gamma,1) \in C_xX_n$).
Therefore the cardinality as in Condition~\ref{cd:m} is bounded
by a constant depending only on $N$ and $\rho$.
For Condition~\ref{cd:A}, let us observe that,
given $\gamma,\eta \in \Sigma_xX_n$ with $\angle_x(\gamma,\eta) \ge \theta>0$,
we have
\[ \inf_{s>0} \big\| \Phi \big( (\eta,1) \big) -\Phi\big( (\gamma,s) \big) \big\|
 \ge \rho^{-1} \inf_{s>0} d_x\big( (\gamma,s),(\eta,1) \big)
 \ge \rho^{-1} \sin\theta. \]
Combining this with $\|\Phi(\eta)\| \in [\rho^{-1},\rho]$,
we find that
\begin{align*}
\cH^{n-1} \big( B_{\angle_x}(\gamma,\theta) \cap \Sigma_xX_n \big)
&\ge \rho^{-(n-1)} \cdot \cH^{n-1}
 \Big( \Phi \big( B_{\angle_x}(\gamma,\theta) \cap \Sigma_xX_n \big) \Big) \\
&\ge c_n \rho^{-2(n-1)} \cdot
 \cH^{n-1} \Big( B_{\angle}\big( \Phi(\gamma)/\|\Phi(\gamma)\|,\arcsin(\rho^{-2}\sin\theta) \big) \Big),
\end{align*}
where we used
$\Phi(B_{\angle_x}(\gamma,\theta) \cap \Sigma_xX_n) \subset B_{\|\cdot\|}(0,\rho)$
and the projection of $\Phi(B_{\angle_x}(\gamma,\theta) \cap \Sigma_xX_n)$
to $\rho^{-1} \cdot \Sph^{n-1} \subset \R^n$ (which is $1$-Lipschitz in $\|\cdot\|$)
in the second inequality,
and $c_n$ is needed when $x$ is at the boundary of the $n$-simplex.
This estimate gives Condition~\ref{cd:A}.

In order to show Condition~\ref{cd:V},
we take small $\sigma=\sigma(\delta,N)>0$ and observe that the set
\[ \bigg\{ x \in X \,\bigg|\, 0<d(z,x)<\sigma,\,
 \angle_z(\gamma_{zx},\gamma) \ge \pi -2\arcsin \bigg( \frac{\bm{\ve}}{2} \bigg) \bigg\} \]
is nonempty by the hypothesis (3), and its volume is bounded below
by a constant depending on $\sigma,\bm{\ve},N$ and the bi-Lipschitz constants of the simplexes.
This completes the proof.
$\qedd$
\end{proof}

Let us stress that the structural conditions (1), (3) in Theorem~\ref{th:simcom}
are reasonably weak and easily verified in concrete examples.
For instance, \emph{Euclidean} and \emph{hyperbolic buildings} fit this framework
(see, for instance, \cite[\S 12.2]{AB}).

\begin{remark}\label{rm:simcom}
\begin{enumerate}[(a)]
\item
It is obvious that spaces with non-uniform dimensions can satisfy
the hypotheses in Theorem~\ref{th:simcom}.
A typical example is $X:=([0,\infty) \cup \R^2)/\sim$,
where $0 \in [0,\infty)$ and $(0,0) \in \R^2$ are identified.
This is a $\CAT(0)$-space with the length distance,
and verifies the conditions in Theorem~\ref{th:simcom}
(by decomposing $[0,\infty)$ and $\R^2$ into bounded simplexes).

\item
The last condition (3) holds true for geodesically complete spaces (recall Remark~\ref{rm:cond}).
Furthermore, one can replace (3) with the isometric embeddability of $X$
into a geodesically complete $\CAT(0)$-space.
The existence of such an embedding seems an intriguing open problem.
\end{enumerate}
\end{remark}

\section{Further problems}\label{sc:prob}

We discuss three further problems.

\begin{enumerate}[(A)]
\item
The argument in \S \ref{sc:CAT+} would apply to $\CAT(1)$-spaces as well,
due to the argument at the end of Section~\ref{sc:Riem}.
This class covers, for instance, graphs and spherical buildings.

\item
Besides the quasi-convexity,
the theory of convex functions can be generalized in various ways.
If a function $f:\R^n \lra \R$ is $\lambda$-convex for some $\lambda<0$ (recall \S \ref{ssc:gf}),
then we have the corresponding \emph{$\lambda$-evolution variational inequality}:
\[ \limsup_{\ve \downarrow 0} \frac{d^2(\xi(t+\ve),y) -d^2(\xi(t),y)}{2\ve}
 +\frac{\lambda}{2} d^2 \big( \xi(t),y \big) +f\big( \xi(t) \big)
 \le f(y) \]
as well as the \emph{$\lambda$-contraction} of gradient curves:
\[ d\big( \xi(t),\zeta(t) \big) \le \e^{-\lambda t} d\big( \xi(0),\zeta(0) \big) \]
(compare these with \eqref{eq:EVI} and \eqref{eq:cont}).
As a counterpart to the self-contractedness,
the $\lambda$-convexity implies that $\e^{\lambda t/2}d(\xi(t),\xi(T))$
is non-increasing in $t \in [0,T]$.
It is unclear if this property is useful when $\lambda<0$.

\item
It would be also worthwhile to consider the relations with other kinds of convexities,
such as the \emph{$(K,N)$-convexity} studied in \cite{EKS} for $N \ge 1$
and in \cite{Oneg} for $N<0$.
It is readily seen that $(0,N)$-convex functions with $N<0$ are quasi-convex
(see \cite[Definition~2.5]{Oneg}).

\item
The self-contractedness is a weaker and more flexible property
than the usual contraction property.
Therefore it could be applied to spaces other than those treated in this paper.
Here we list some spaces on those the behavior of gradient curves of
(general) convex functions are less understood than $\CAT(0)$-spaces:
Wasserstein spaces (see \cite{AGSbook,Ogra,Sa}), normed spaces (see \cite{OSnc}),
and discrete spaces.
Another interesting class is \emph{Alexandrov spaces}
of curvature bounded below (see \cite{PP,Ly,OP1}).
\end{enumerate}
\bigskip

{\bf Note.}
After completing this paper, Zolotov~\cite{Zo} made an interesting contribution
on the connection between self-contracted curves and the embedding problem of finite snowflakes.
The main tool in his work is Ramsey theory.
The paper \cite{Zo} also includes an extended list of metric spaces
in which bounded self-contracted curves are rectifiable,
including complete, locally compact, geodesically complete $\CAT(1)$-spaces.
This (partially) answers the problem (A) above,
while a quantitative estimate along the strategy in the present paper will be also meaningful.

\renewcommand{\refname}{{\large References}}
{\small

}

\end{document}